\numberwithin{equation}{section}
\newcommand{\Prob}{\mathbb{P}}
\newcommand{\R}{\mathbb{R}}
\newcommand{\E}{\mathbb{E}}
\newcommand{\fM}{\mathfrak{M}}
\newcommand{\cE}{\mathcal{E}}
\newcommand{\cF}{\mathcal{F}}
\newcommand{\cG}{\mathcal{G}}
\newcommand{\cU}{\mathcal{U}}
\newcommand{\sF}{\mathsf{F}}
\newcommand{\Rone}{\mathtt{r}_n}
\newcommand{\Rtwo}{\widehat{\mathtt{r}}_n}
\renewcommand{\d}{\mathrm{d}}
\renewcommand{\P}{\Prob}
\newcommand{\alt}{\mathrm{alt}}
\newcommand{\blt}{\mathrm{blt}}
\newcommand{\rainbow}{\mathrm{rbw}}
\newcommand{\knight}{\mathrm{knt}}
\newcommand{\corr}{\mathrm{corr}}
\newcommand{\dist}{\operatorname{dist}}
\newcommand{\tmix}{t_\textsc{mix}}
\newcommand{\tv}{{\textsc{tv}}}
\renewcommand{\u}{\mathsf{u}}
\renewcommand{\v}{\mathsf{v}}
\newcommand{\x}{\mathsf{x}}
\newcommand{\y}{\mathsf{y}}
\newcommand{\gKap}{\mathfrak{c}_0}
\newcommand{\gAlpha}{\upalpha}
\newcommand{\good}{\texttt{good}\xspace}
\newcommand{\autI}{\cA^{(1)}}
\newcommand{\autII}{\cA^{(2)}}
\newcommand\nleftrightsquigarrow{
\mathrel{\mkern6mu\not\mkern-6mu\leftrightsquigarrow}}
\newtheorem{theorem}{Theorem}[section]
\newtheorem*{theorem*}{Theorem}
\newtheorem{lemma}[theorem]{Lemma}
\newtheorem{claim}[theorem]{Claim}
\newtheorem{proposition}[theorem]{Proposition}
\newtheorem{fact}[theorem]{Fact}
\newtheorem{corollary}[theorem]{Corollary}
\theoremstyle{definition}{
\newtheorem{example}[theorem]{Example}

\newtheorem*{definition*}{Definition}

\newtheorem{question}[theorem]{Question}
\newtheorem*{question*}{Question}
\newtheorem*{example*}{Example}
\newtheorem*{examples*}{Examples}
\newtheorem{remark}[theorem]{Remark}
\newtheorem*{remark*}{Remark}

}
\renewcommand{\bar}[1]{\overline{#1}}
\newcommand{\Cov}{\operatorname{Cov}}
\newcommand{\Var}{\operatorname{Var}}
\newcommand{\Vol}{\operatorname{Vol}}
\newcommand{\one}{\mathbbm{1}}
\newcommand{\bfone}{\mathbf{1}}
\newcommand{\diam}{\operatorname{diam}}
\newcommand{\Z}{\mathbb{Z}}
\newcommand{\C}{\mathbb{C}}
\newcommand{\Cq}{\mathfrak{C}_q}
\newcommand{\cA}{\mathcal{A}}
\newcommand{\cR}{\mathcal{R}}
\renewcommand{\restriction}{\mathord{\upharpoonright}}
\renewcommand{\epsilon}{\varepsilon}
\crefname{step}{Step}{Steps}
\crefname{question}{Question}{Questions}
\title{The noisy voter model with general initial conditions}
\author[P. Caddeo]{Patrizio Caddeo}
\address{P. Caddeo \hfill\break
Courant Institute\\ New York University\\
251 Mercer Street\\ New York, NY 10012, USA.}
\email{patrizio.caddeo@courant.nyu.edu}
\author[E. Lubetzky]{Eyal Lubetzky}
\address{E. Lubetzky \hfill\break
Courant Institute\\ New York University\\
251 Mercer Street\\ New York, NY 10012, USA.}
\email{eyal@courant.nyu.edu}
\begin{document}

\begin{abstract}
We study the noisy voter model with $q\geq 2$ states and noise probability $\theta$ on arbitrary bounded-degree $n$-vertex graphs $G$ with subexponential growth of balls (e.g., finite subsets of~$\Z^d$). Cox, Peres and Steif (2016) showed for the binary case $q=2$ (and a wider class of chains) that, when starting from a worst-case initial state, this Markov chain has total variation cutoff at $t_n=\frac1{2\theta}\log n$. 
The second author and Sly (2021) analyzed faster initial conditions for Glauber dynamics for the 1D Ising model, which is the noisy voter for $q=2$ and $G=\Z/n\Z$. They showed that the ``alternating'' initial state is the fastest one if $\theta\geq \frac23$, and conjectured that this holds for all values of the noise $\theta$.

Here we show that for every graph $G$ as above and all $\theta,q$ and initial states $\x_0$, the noisy voter model exhibits cutoff at an explicit function of the autocorrelation of the model started at~$\x_0$. Consequently, for $G=\Z/n\Z$ and $q=2$ (Glauber dynamics for the 1D Ising model), we
 confirm the conjecture of [LS21] that 
the alternating initial condition is asymptotically fastest for all $\theta$. Analogous results hold in $\Z_n^d$ for $q=2$ and all $d\geq 1$  (``checkerboard'' initial conditions are fastest) as well as for $d=1$ and all $q\geq 2$ (``rainbow'' initial conditions are fastest).
\end{abstract}

\maketitle

\section{Introduction}

The noisy voter model on a connected graph $G=(V,E)$, with  noise parameter $0<\theta<1$ and $q\geq 2$ states, is the Markov chain $(X_t)$ on the space $\Omega=\{0,\ldots,q-1\}^V$ (assigning one of $q$ different states, or opinions, to each vertex), evolving in the following way.
An independent rate-$1$ Poisson clock is attached to every $u\in V$. Once the clock at $u$ rings, the value of $X_t(u)$ is updated as follows: 
\begin{enumerate}[(i),wide=0pt, itemsep=0ex, topsep=0ex, ref=(\roman*)]
\item\label[step]{def:walk-step} with probability $1-\theta$, the new value is $X_t(v)$ for a uniformly chosen neighbor $v\sim u$;
\item\label[step]{def:kill-step} with probability $\theta$, the new value is uniformly chosen out of $\{0,\ldots,q-1\}$.
\end{enumerate}
Let $\mu_G$ be the stationary distribution of this Markov chain (ergodic but not necessarily reversible).

The classical voter model is the special case $\theta=0$, and $q=2$ (noiseless, binary), introduced in the 1970's independently by Clifford and Sudbury~\cite{CliffordSudbury73} and by Holley and Liggett~\cite{HolleyLiggett75} to model the evolution systems with two competing population types; see, e.g.,~\cite{LiggettBook99} for the rich literature on it. 
The noisy voter model was introduced by Granovsky and Madras~\cite{GranovskyMadras95} in 1995, where properties of the equilibrium $\mu_G$ were studied (cf.~the recent works~\cite{BHZ23,PymarRevera24} on features of the stationary measure). Our focus is on the asymptotic time it takes $(X_t)$ to converge to $\mu_G$ in total variation distance. 

Denote the total variation mixing time to within distance $\epsilon$ from $\mu_G$ by
\[ \tmix^{\x_0}(\epsilon) = \inf\left\{t \geq 0 \,:\; \left\|\P_{\x_0}(X_t\in\cdot)- \mu_G\right\|_\tv < \epsilon\right\}\quad\mbox{and}\quad \tmix(\epsilon) = \max_{\x_0}\tmix^{\x_0}(\epsilon)\,.\]
 In the context of a sequence of finite Markov chains $\{X_t^{(n)}\}$, the \emph{cutoff phenomenon}, discovered by Aldous and Diaconis in the 1980's~\cite{Aldous83,AldousDiaconis86,DiaconisShahshahani81,Diaconis96}, refers to the situation where there exists a sequence $t_n$ such that, for every fixed $0<\epsilon<1$, one has $\tmix(\epsilon) /t_n \to 1$ as $n\to\infty$. That is, a sharp transition along a $o(t_n)$ window occurs, whereby the distance from equilibrium drops from near $1$ to near $0$.
When cutoff occurs at $t_n$ for $\tmix^{\x_0}$---that is, starting from a sequence $\{\x_0^{(n)}\}$ of initial conditions---the asymptotic mixing time from $\x_0$ is well-defined as $(1+o(1))t_n$ independently of the choice of~$\epsilon$, and one can compare it to other sequences of initial configurations and characterize which is the fastest. 
We will establish cutoff for the noisy voter from any $\x_0$ on $G$ with subexponential growth of balls.

Consider the binary case $q=2$. Cox, Peres and Steif~\cite{CPS16} proved in 2016 that for every sequence of bounded-degree graphs $G$ on $n$ vertices and fixed noise parameter $0<\theta<1$, the noisy voter model has $\tmix(\epsilon) = (\frac1{2\theta}+o(1))\log n$ for every fixed $0<\epsilon<1$; that is, when started at a worst-case initial state, there is cutoff at $t_n=\frac1{2\theta}\log n$
(in fact~\cite{CPS16} allowed a wider family of rules for selecting the neighbor $v\sim u$ in the voting update of \cref{def:walk-step} above). Their lower bound on $\tmix$---the asymptotically worst initial condition $\x_0$---was realized by a monochromatic (all-$0$ or all-$1$) state.

In the special case of $q=2$ and $G=\Z/n\Z$ (the $n$-cycle), the noisy voter model is well-known to be equivalent to Glauber dynamics for the Ising model on the 1D torus at inverse-temperature $\beta$, via the correspondence\footnote{In an update of Glauber dynamics for the Ising model, if a site $x$ has $a$ neighbors with $+$ spins and $b$ neighbors with $-$ spins, the probability that the dynamics sets $x$ to $+$  is $(1+\tanh(\beta \sigma))/2$ for $\sigma=a-b$, hence the equivalence.} $\theta=1-\tanh(2\beta)$. (NB.\ deleting but a single edge from $G$ would break this equivalence, and make the stationary distribution of the noisy voter model be non-reversible.)
Thus, said result of~\cite{CPS16} recovers the 1D special case of the result~\cite{LubetzkySly13} of the second author and Sly, where it was shown that there is cutoff for the Ising model on the torus $(\Z/n\Z)^d$ in any dimension $d$, at high temperature (covering all $\beta$ at $d=1$ and all $\beta<\beta_c$ at $d=2$; its companion paper~\cite{LubetzkySly14} generalized the method to arbitrary bounded-degree graphs with sub-exponential growth rate). A later work~\cite{LubetzkySly16} by the same authors established cutoff for the Ising model on $(\Z/n\Z)^d$ for all $\beta<\beta_c$ in any dimension $d$, using a different approach dubbed there \emph{information percolation} (see also~\cite{LubetzkySly15,LubetzkySly17}) that carefully studies the space-time clusters of the nearest-neighbor interaction, rolled backward in time. (See \cref{fig:nvm-sim} for an illustration for noisy voter 
and \cref{sec:duality} for more details.) This opened the door to studying $\tmix^{\x_0}$ for non-worst-case initial conditions $\x_0$, where very little was previously known; e.g., it was shown there that for $d=1$, a $(1-o(1))$-fraction of the initial states $\x_0$ is asymptotically as slow as the worst-case one, whereas a random i.i.d.\ initial condition is asymptotically twice faster. 

\begin{figure}
\vspace{-0.1in}
\includegraphics[width=0.48\textwidth]{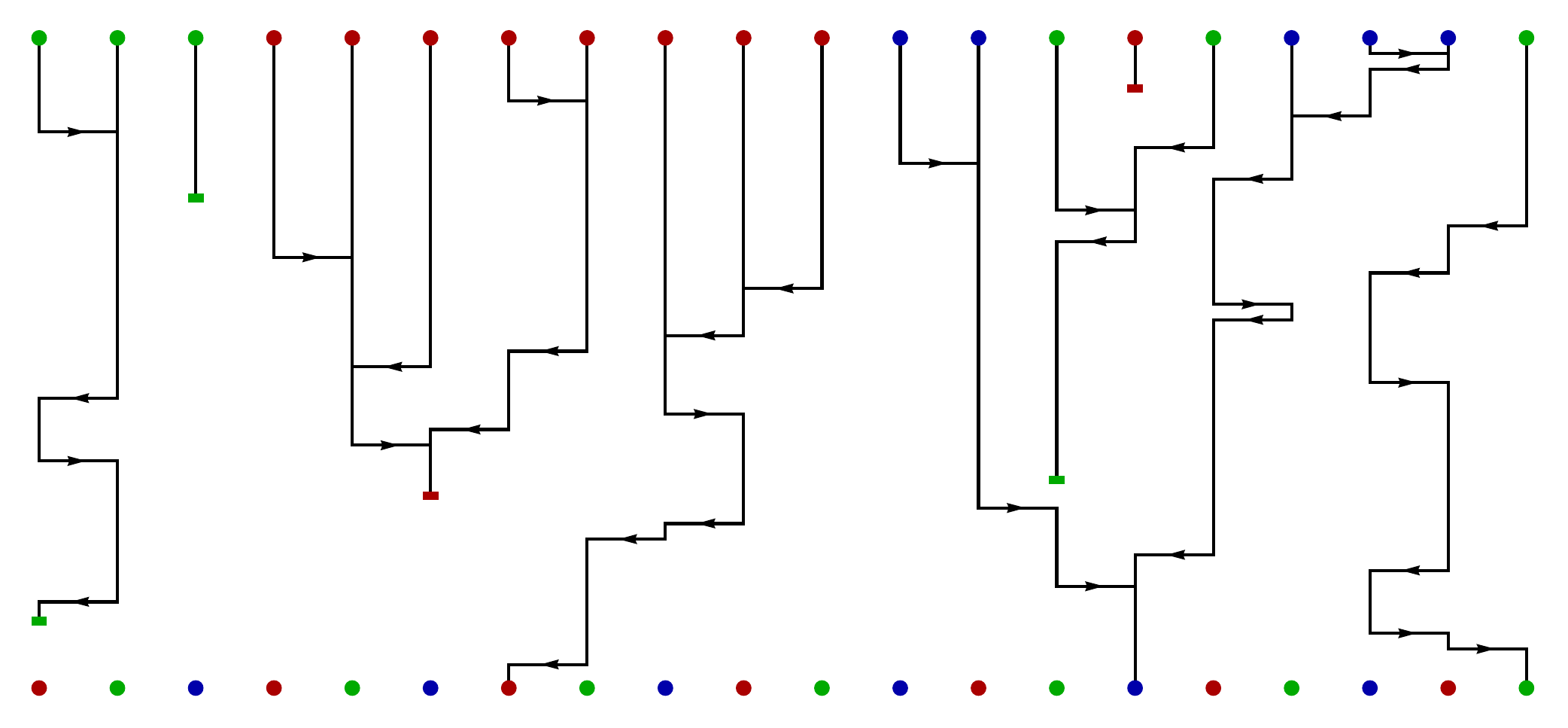}
\raisebox{-0.12in}{\includegraphics[width=0.4\textwidth]{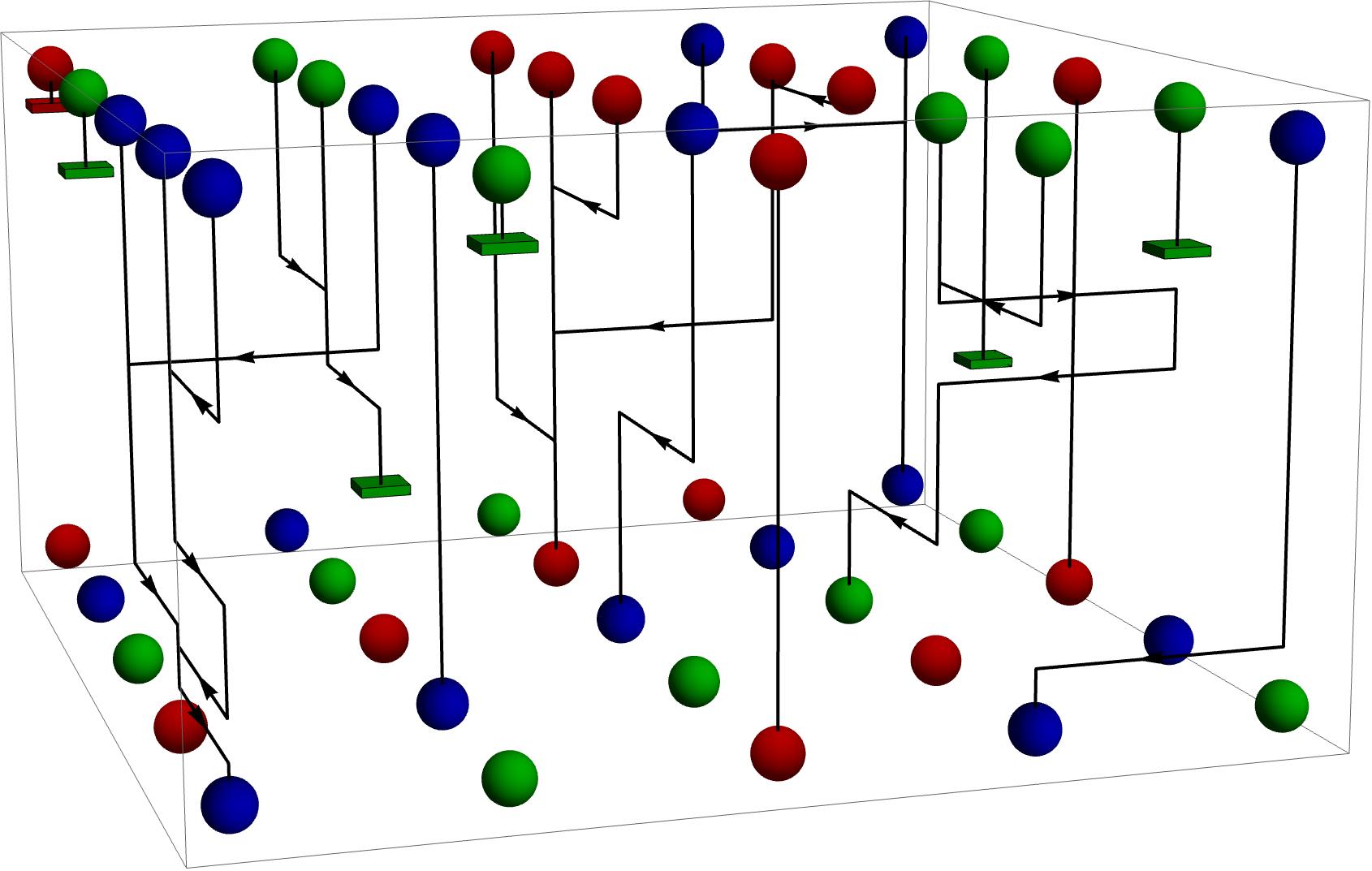}}
\vspace{-0.15in}
    \caption{Noisy voter model for $q=3$ on the lattice in dimensions 1 and 2, when revealing the update history (from the final (top) state $X_t$ back in time to the initial state $X_0=\x_0$).}
    \vspace{-0.15in}
    \label{fig:nvm-sim}
\end{figure}

Thereafter, the second author and Sly~\cite{LubetzkySly21} investigated, for $q=2$ and $G=\Z/n\Z$ (i.e., Glauber dynamics for the 1D Ising model at inverse-temperature $\beta$, equivalent to the noisy voter on~$\Z/n\Z$ with $q=2$ and $\theta=1-\tanh(2\beta)$), mixing from specific deterministic initial conditions, and showed:
\begin{enumerate}[(a),left=0pt, itemsep=0ex, topsep=0ex]
    \item started from the alternating $\x_\alt = (0,1,0,1,\ldots)$, there is cutoff at  $t_n=\max\{\tfrac1{4-2\theta},\tfrac1{4\theta}\}\log n$;
    \item started from the bi-alternating $\x_{\blt}=(0,0,1,1,\ldots)$, there is cutoff at $t_n=\max\{\tfrac12,\tfrac1{4\theta}\}\log n$ (matching the mixing time from $\x_\alt$ for $\theta\leq\frac12$ but strictly dominated by it for $\theta>\frac12$);
    \item the initial state $\x_\alt$ is asymptotically fastest at high enough temperature (specifically, if $\theta \geq \frac23$).
\end{enumerate}
The authors of~\cite{LubetzkySly21} conjectured that $x_\alt$ is asymptotically fastest  at every temperature (for all $\theta$). 

\subsection{Results}\label{sec:results}

We show that the noisy voter model for every fixed $q\geq 2$ exhibits cutoff under general starting states, in arbitrary families of graphs with a sub-exponential growth rate of balls (e.g., subsets of the lattice). Specifically, cutoff occurs at an explicit function of the autocorrelation of the model started at $\x_0$. Consequently, we confirm the conjecture of the second author and Sly that, in the special case of $G=\Z/n\Z$, the initial state $\x_\alt$ is asymptotically  fastest for all $\theta$ (or $\beta$). More generally, for the noisy voter model on $(\Z/n\Z)^d$ with $q=2$ and any $d\geq1$, the ``checkerboard'' initial conditions are fastest; for $q \geq 2$ and $d=1$, the ``rainbow'' initial conditions are fastest.

Let $G=(V,E)$ be a finite graph on $n$ vertices. 
Our assumption on its geometry is the following \textit{sub-exponential growth of balls} property: there exist constants $\gKap >0$ and $\gAlpha \in(0,1)$ such that 
\begin{equation}\label{eq:expansion}
|B_v(r)| \leq  \gKap\exp(r^{1-\gAlpha})\quad\mbox{for every $v\in V$ and $r\geq 1$}\,, 
\end{equation}
where $B_v(r)$ denotes the ball of radius $r$, centered at the vertex $v$, with respect to graph distance.
Consider the noisy voter model on such a graph $G$ with $q$ states and noise probability $\theta\in(0,1]$, and let $\x_0$ be an arbitrary initial configuration. Denoting
the degree of a vertex $v\in V$ by $d_G(v)$ and letting $\pi_G(v) = d_G(v)/ (2|E|)$ be the stationary measure of a simple random walk on $G$,
we define the \emph{autocorrelation} function of the noisy voter model 
\begin{equation}\label{def:autocorrelationL1}
\autI_t(\x_0) = \sum_v\pi_G(v)\Big(\P_{\x_0}(X_t(v)=\x_0(v))-\frac{1}{q}\Big)\,,
\end{equation}
as well as an $L^2$ analog of it which will turn out to be nothing but a time-rescaled version of $\autI_t$:
\begin{align}
\autII_t(\x_0) &
= \sum_{v \in V} \pi_G(v) \sum_{j=0}^{q-1} \Big(\P_{\x_0}\left(X_t(v) = j\right) - \frac{1}{q}\Big)^2\,.\label{def:autocorrelationL2}
\end{align}
Indeed, despite their seemingly different formulation, one has $\autII_t(\x_0) = \autI_{2t}(\x_0)$ (see \cref{eq:autI-autII}); hence, we will refer to both objects as the autocorrelation function (see \cref{sec:autocorrelation} for more details). A key quantity for the behavior of $\tmix^{\x_0}$ is the
 time $T_{\x_0}$ required for $\autII_t(\x_0)$ to drop below $\frac1n$:
\begin{equation}
T_{\x_0} = 
\frac12\inf\Big\{t \geq 0 \, : \; \autI_t(\x_0) \leq \frac{1}{n} \Big\}
=\inf\Big\{t \geq 0 \, : \; \autII_t(\x_0) \leq \frac{1}{n} \Big\}\,.\label{def:Tx0}
\end{equation}
Our main theorem states that waiting for the maximum between $T_{\x_0}$ and $\frac1{4\theta}\log n$---which will correspond to a quantity $T_{\corr}$ as explained later (see \cref{sec:proof-ideas})---is both necessary and sufficient for the model to be mixed. That is, $\max\{T_{\x_0},T_{\corr}\}$ gives the correct asymptotic mixing time on any family of graphs with limited expansion, under arbitrary initial conditions $\x_0$.
\begin{theorem}\label{thm:main}
Fix $\theta\in(0,1]$ and $q\geq2$. Let $G=(V,E)$ be a connected graph on $n$ vertices satisfying \cref{eq:expansion} for some $\gKap>0$ and $\gAlpha\in(0,1)$, and let $\x_0$ be an arbitrary initial condition. Then the $q$-state noisy voter model on $G$  
and noise probability $\theta$ started from $\x_0$ satisfies
\begin{equation}
\tmix^{\x_0}(\epsilon) = (1+
o(1)
)\max\Big\{T_{\x_0}, \frac{1}{4\theta}\log n\Big\}\,,\label{eq:mixing}
\end{equation}
where the $o(1)$-term is $ O\big(\log^{-\gAlpha/2} n\big)
$ with a constant 
%
depending on $\epsilon,\theta,q,\gKap,\gAlpha$ (but not $G$ or $\x_0$).

In particular, for any sequence of connected graphs $G^{(n)}$ with $n$ vertices satisfying \cref{eq:expansion} for the same $\gAlpha,\gKap$, and any sequence of initial conditions $\x_0^{(n)}$, the $q$-state noisy voter model on $G^{(n)}$ with noise probability $\theta$ has cutoff at $\max\{T_{\x_0^{(n)}}, \frac{1}{4\theta}\log n\}$.
\end{theorem}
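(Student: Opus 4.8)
The plan is to establish matching lower and upper bounds on $\tmix^{\x_0}(\epsilon)$, both asymptotic to $T_{\corr}:=\max\{T_{\x_0},\tfrac1{4\theta}\log n\}$, working throughout with the graphical (dual) representation of the chain: realising all updates from a common family of rate-$1$ Poisson clocks and marks, the value $X_t(v)$ is read off by following the update history backward from time $t$, which produces a system of continuous-time random walks on $G$ that coalesce along shared voting updates (\cref{def:walk-step}) and are killed independently at rate $\theta$ (at a kill the walk is reset to a uniform state), with any lineage surviving to time $0$ reading off $\x_0$ at its endpoint. The first preliminary step is to set this up and to derive $\autII_t(\x_0)=\autI_{2t}(\x_0)$: writing $\sum_j(\P_{\x_0}(X_t(v)=j)-\tfrac1q)^2=\P_{\x_0}(X_t(v)=X'_t(v))-\tfrac1q$ for two independent copies $X,X'$, duality expresses the right-hand side through two independent time-$t$ lineages from $v$, and reversibility of the walk with respect to $\pi_G$ (via $\sum_v\pi_G(v)p_t(v,x)p_t(v,y)=\pi_G(x)p_{2t}(x,y)$) collapses them into one time-$2t$ walk. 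I would also record the $S_q$-symmetry of the dynamics, which makes each stationary marginal exactly uniform, and the elementary fact that $t\mapsto\log\autII_t$ has decay rate bounded in terms of $\theta$, so that $\autII_{T_{\x_0}}\le\tfrac1n$ forces $\autII_{(1-\delta)T_{\x_0}}\gg\tfrac1n$.

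For the lower bound I would prove $\tmix^{\x_0}(\epsilon)\ge(1-o(1))T_{\x_0}$ and $\tmix^{\x_0}(\epsilon)\ge(1-o(1))\tfrac1{4\theta}\log n$ separately. The first uses the distinguishing statistic $\Psi_t(\sigma)=\sum_v\pi_G(v)\sum_j(\one\{\sigma(v)=j\}-\tfrac1q)(\P_{\x_0}(X_t(v)=j)-\tfrac1q)$, which satisfies $\E_{\x_0}[\Psi_t(X_t)]=\autII_t(\x_0)$ and $\E_{\mu_G}[\Psi_t]=0$; the variances under both laws split into a diagonal part of size $O(\autII_t/n)$ and an off-diagonal part governed by the probability that killed lineages from distinct vertices coalesce, which is summable over pairs because \cref{eq:expansion} forces each lineage to explore only subpolynomially many vertices during its effective lifetime $O(\tfrac1\theta\log n)$, so that Chebyshev separates the laws for all $t<(1-\delta)T_{\x_0}$. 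The bound $\tfrac1{4\theta}\log n$ is the ``correlation-equilibration'' bound, valid irrespective of $\x_0$: it comes from a second-moment argument at the level of the $L^2$ (chi-square) divergence, testing against a pair-type functional on which $\P_{\x_0}(X_t\in\cdot)$ and $\mu_G$ differ at scale exceeding the noise until the $L^2$ survival probability $e^{-4\theta t}$ of a generic lineage pair (in the doubled-time picture of $\autII_t=\autI_{2t}$) drops below $n^{-1}$; the constant $\tfrac14$ rather than $\tfrac12$ is precisely this $L^2$ rescaling.

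The upper bound is the main work: I would show that at $t=(1+\delta)T_{\corr}$ the total variation distance is $O(\log^{-\gAlpha/2}n)$ via an $L^2$/information-percolation argument. Insert a short burn-in of length $s\to\infty$ with $s=o(T_{\corr})$ (of order $\log^{1-\gAlpha/2}n$, a vanishing fraction of $T_{\corr}$, which is what produces the $O(\log^{-\gAlpha/2}n)$ term), reducing matters to an $L^2$-type bound over the window $[s,t]$ started from $X_s$; then run the dual backward from time $t$ to time $s$, so that the vertices whose lineage survives the window form a ``red'' set $R$, with $X_t|_{R^c}$ a function of the graphical data on $[s,t]$ alone and hence couplable identically with a stationary copy. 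The distance is then bounded by an $L^2$ functional of the configuration on $R$, which I would expand as a sum over coalescence-clusters of surviving lineages: the single-cluster contributions, summed, reproduce $\autII_t\le\tfrac1n$ for $t\ge T_{\x_0}$, while the cross-cluster interactions are controlled by pair-coalescence probabilities of killed walks and, via \cref{eq:expansion} (which bounds by a subpolynomial factor the number of vertices a lineage visits in its lifetime $O(\tfrac1\theta\log n)$), contribute a correction of order $n\,e^{-4\theta t}$ up to a subpolynomial factor, negligible once $t\ge(1+\delta)\tfrac1{4\theta}\log n$. The main obstacle is exactly this interaction estimate: on graphs of subexponential growth coalescence is slow, so one cannot argue that $R$ collapses to $O(1)$ lineages (indeed $\E|R|$ stays polynomially large at $T_{\corr}$); instead one must show that the combinatorial sum over cluster configurations telescopes, with the dangerous near-pairs suppressed by the bounded lineage lifetime together with \cref{eq:expansion}, and that all of this is uniform in $G$ and $\x_0$ with the stated rate. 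The secondary difficulty is pinning down the constant $\tfrac1{4\theta}$ in the second lower bound so that it exactly matches the threshold forced by this interaction term.
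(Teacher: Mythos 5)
Your overall plan is broadly in the spirit of the paper — duality, autocorrelation, $L^1$-$L^2$ reduction via a burn-in that produces well-separated surviving clusters, and a pair of lower-bound test statistics — and the preliminaries ($\autII_t=\autI_{2t}$, complete monotonicity of $\autII_t$) and the $T_{\x_0}$ lower bound match the paper's proof closely. The upper-bound sketch is also on the right track (though the $n e^{-4\theta t}$ term in the final $L^2$ reduction actually comes from within-cluster contributions — the probability that a surviving cluster has more than a few lineages, or that two walks in a small cluster both survive — not from cross-cluster interactions, which are killed off by the well-separation and the coupling to an i.i.d.\ product).

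The genuine gap is in the $\frac{1}{4\theta}\log n$ lower bound, which is precisely the point that was left open as a conjecture in~\cite{LubetzkySly21}, and is the main technical contribution of the paper. Your description there is too vague to constitute a plan: you gesture at a ``second-moment argument at the level of $\chi^2$-divergence'' and a ``pair-type functional on which $\P_{\x_0}(X_t\in\cdot)$ and $\mu_G$ differ,'' but you do not say what that functional is nor why it distinguishes the two measures \emph{uniformly over} $\x_0$. This uniformity is the whole game: in~\cite{LubetzkySly21} the Hamiltonian statistic only works because the alternating state minimizes it, and for general $\x_0$ the sign of $\E_{\mu_G}[\cR]-\E_{\x_0}[\cR]$ is not obvious. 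The paper's resolution is to use the nearest-neighbor agreement statistic $\cR_t^{\x_0}(\x)=\sum_{u\sim v}(\one_{\{\x(u)=\x(v)\}}-\P_{\x_0}(X_t(v)=\x(u)))$ and to prove (Claim~5.2) that for \emph{every} $\x_0$, every edge $uv$ and every $k$, the quantity $\Cov_{\mu_G}(Y(u)^k,Y(v)^k)-\Cov_{\x_0}(X_t(u)^k,X_t(v)^k)-\P(u\overset{>t}{\leftrightsquigarrow}v)$ is a nonnegative real number: the extra cross-term $\E_{\x_0}[X_t(u)^k\bar{X_t'(v)^k}\one_{\{\tau\le t\}}]$ is shown to be a conditional expectation of the form $\E[\,|\cdots|^2\,]\in\R_+$. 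This is the step that removes the dependence on $\x_0$, and your proposal does not contain it; you even flag pinning down the $\tfrac1{4\theta}$ constant as a ``secondary difficulty,'' when it is the central one. Two more ingredients you omit: (i) the matching lower bound $\P(u\overset{>t}{\leftrightsquigarrow}v)\ge e^{-2\theta t-o(t)}$ is not automatic — the paper must construct an explicit survival-and-coalescence strategy for the two walks, confining each to a small ball of conductance $O(r^{-\gAlpha})$ via the Oveis Gharan--Trevisan escape-probability bound, and the existence of such balls is itself a lemma using \cref{eq:expansion}; and (ii) the $\tfrac1{4\theta}$ constant does not arise from an ``$L^2$ rescaling'' but from a second-moment comparison $\sum_{uv\in E}\P(u\overset{>t}{\leftrightsquigarrow}v)\approx ne^{-2\theta t}$ against $\sqrt{\Var}\approx\sqrt n$ (with the variance of the edge-sum bounded by $O(n)$ via coalescence estimates), so the threshold is $ne^{-2\theta t}\gg\sqrt n$, i.e.\ $t<\tfrac1{4\theta}\log n$.
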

\cref{thm:main} shows that a dynamical phase transition in the parameter $\theta$ can occur under certain initial conditions $\x_0$: we will see that in the case of $G=\Z_n^d$, if $\x_0$ is chosen by repeating a balanced pattern of the $q$ colors, then $T_{\x_0} \sim \frac1{2[(1-\lambda) + \lambda\theta]}\log n$ for some $\lambda<1$ (here and in what follows, we let $f_n\sim g_n$ denote $f_n=(1+o(1))g_n$). It follows that $\tmix$ changes from $\frac{1}{4\theta}\log n$ to $\frac{1}{2[(1-\lambda) + \lambda\theta]}\log n$ at some $\theta_0\in(0,1)$, implying a singularity in the analytical behavior of $\tmix^{\x_0}$ as a function of $\theta$. 

The full \cref{eq:mixing} was only known \cite{LubetzkySly21} for the 1D Ising model on $\Z_n$ with periodic boundary conditions and specific initial conditions $\x_0$. While \cite{LubetzkySly21} showed that $\tmix^{\x_0} \geq \frac{1-o(1)}{4-2\theta}\log n$ for all $\x_0$ (implying, via their analysis of the alternating initial condition $\x_\alt = (0,1,0,1,\ldots)\in\Z_n$ that it is asymptotically fastest for $\theta\geq \frac23$), the fact that one also has $\tmix^{\x_0} \geq \frac{1-o(1)}{4\theta}\log n$ for all $\x_0$ (implying that $\x_\alt$ is always asymptotically fastest) was left there as a conjecture. 
Here not only do we confirm this, but we extend the result to the noisy voter model on any bipartite graph with $n=|V|$ large satisfying \cref{eq:expansion} for some $\gAlpha,\gKap$. Indeed, if $G$ is bipartite with parts $V_1,V_2$, 
we can define 
\begin{equation}\label{eq:alternatingbipartite}
\x_{\alt}(v) := \one_{\{v\in V_1\}}
\end{equation}
(see \cref{fig:checkboard} for an illustration on $\Z^d$) and have the following.
\begin{figure}
\vspace{-0.05in}
  \includegraphics[width=0.35\textwidth]
  {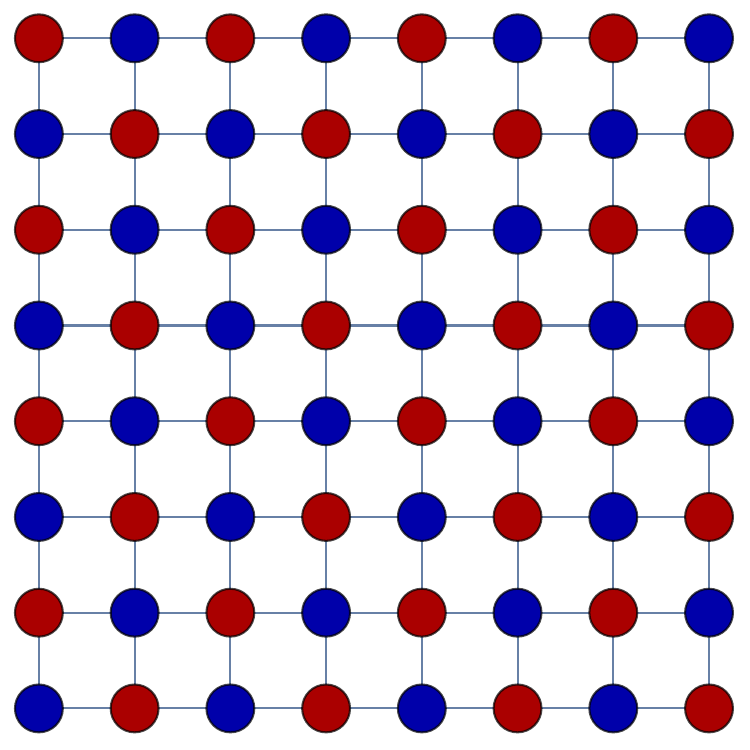}
\hspace{0.1in}
\raisebox{-0.45in}{
  \includegraphics[width=0.45\textwidth]
  {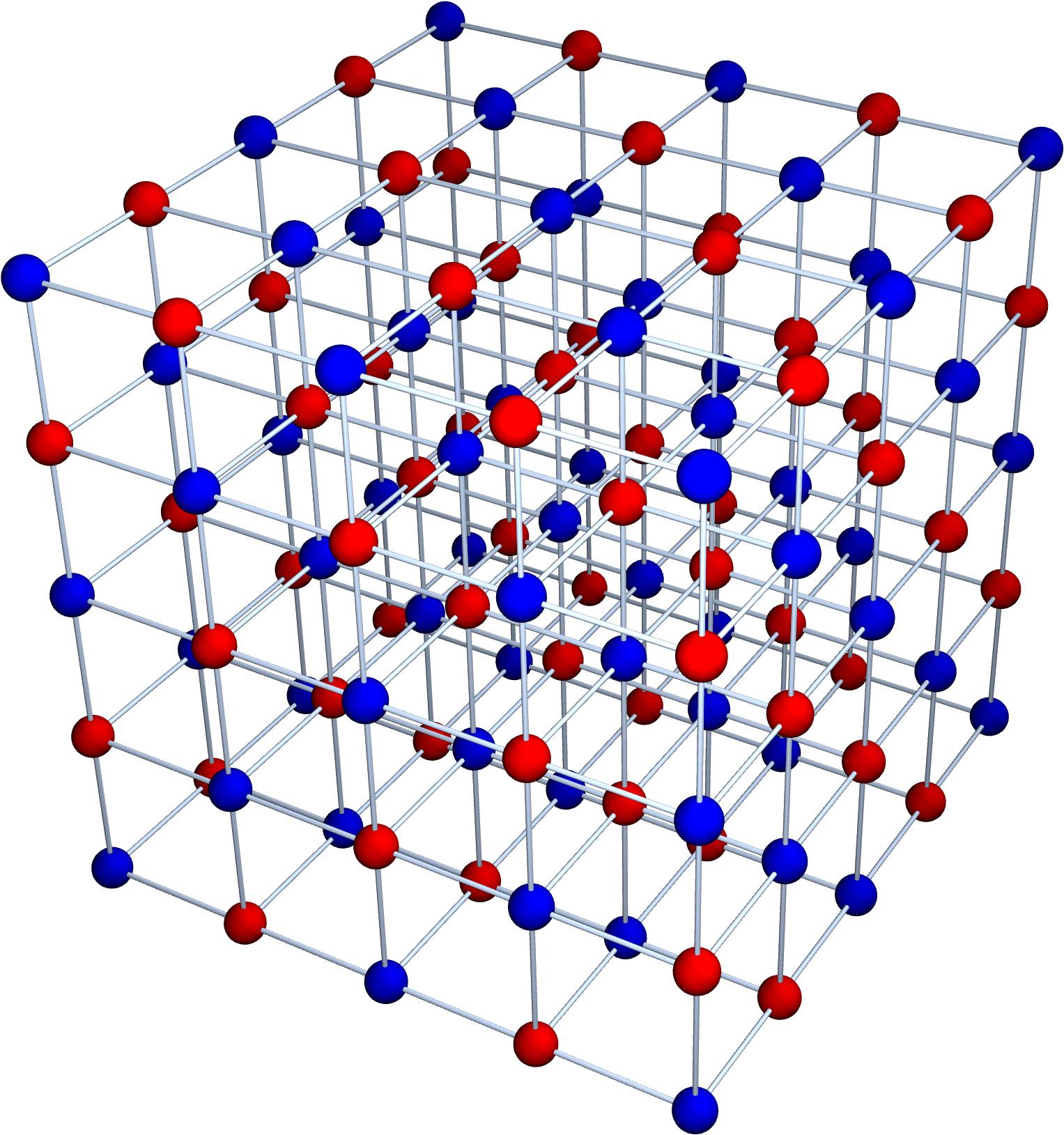}}

\vspace{0.1in}
  \includegraphics[width=0.6\textwidth]
  {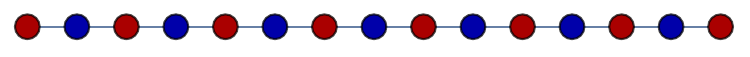}
\vspace{-0.1in}  
\caption{When $q=2$, alternating/checkerboard starting configurations are asymptotically the fastest in all dimensions.}
\label{fig:checkboard}
\vspace{-0.1in}
\end{figure}
\begin{corollary}\label{cor:bipartite}
Fix $\theta\in(0,1]$. Let $G^{(n)}$ be a sequence of bipartite graphs on $n$ vertices satisfying \cref{eq:expansion} for the same $\gAlpha,\gKap$. Let $\x_\alt = \x_{\alt}^{(n)}$ denote the alternating condition on $G^{(n)}$ as in \cref{eq:alternatingbipartite}. Then, for every fixed $0<\epsilon<1$, the binary noisy voter model on $G^{(n)}$ with noise $\theta$ satisfies
\begin{equation}
\tmix^{\x_{\alt}}(\epsilon) = \frac{1+o(1)}{\min\{4-2\theta,4\theta\}}\log n\,,\label{eq:mixingbipartite}
\end{equation}
where the $o(1)$-term is $ O\big(\log^{-\gAlpha/2} n\big)
$ with a constant depending on $\epsilon,\theta,\gAlpha,\gKap$ (but not on $G^{(n)}$).
Furthermore, if $\x_0=\x_0^{(n)}$ is any arbitrary sequence of initial conditions, we have
\[
\liminf_{n \to \infty}\frac{\tmix^{\x_0}(\epsilon)}{\tmix^{\x_{\alt}}(\epsilon)} \geq 1\,,
\]
thus $\x_{\alt}$ is the asymptotically fastest initial condition on $G^{(n)}$. In particular, for Glauber dynamics for the Ising model on $\Z_n$ (the case $G^{(n)}=\Z_n$) at any inverse temperature $\beta>0$, the alternating initial condition $\x_{\alt}$ is the asymptotically fastest initial condition. 
\end{corollary}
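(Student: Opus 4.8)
The plan is to deduce \cref{cor:bipartite} from \cref{thm:main}, which reduces the problem to two computations: (a)~evaluating $T_{\x_\alt}$, and (b)~showing $T_{\x_0}\ge T_{\x_\alt}$ for every initial condition $\x_0$. The single ingredient needed is the genealogical (dual) description of the noisy voter model recorded in \cref{sec:duality}: reading the update history of $X_t(v)$ backward in time, the lineage of $v$ is killed at rate $\theta$ (at which point $X_t(v)$ becomes uniform on $\{0,\dots,q-1\}$) and otherwise jumps at rate $1-\theta$ to a uniformly chosen neighbour. Writing $P$ for the transition matrix of simple random walk on $G$ and $H_t:=e^{(1-\theta)t(P-I)}$ for the associated heat semigroup (with kernel $H_t(v,w)$), one obtains
\begin{equation}\label{eq:dual-plan}
\P_{\x_0}\big(X_t(v)=j\big)-\tfrac1q \;=\; e^{-\theta t}\sum_{w\in V}H_t(v,w)\Big(\ind{\x_0(w)=j}-\tfrac1q\Big)\,.
\end{equation}
For $q=2$, introduce $\sigma=\sigma_{\x_0}\in\{\pm1\}^V$ by $\sigma(v)=1-2\x_0(v)$; then the bracket in \cref{eq:dual-plan} is $\tfrac12\sigma(w)$ for $j=0$, so $\P_{\x_0}(X_t(v)=0)-\tfrac12=\tfrac12 e^{-\theta t}(H_t\sigma)(v)$, and since $\sum_{j\in\{0,1\}}(\P_{\x_0}(X_t(v)=j)-\tfrac12)^2=2(\P_{\x_0}(X_t(v)=0)-\tfrac12)^2$, substituting into \cref{def:autocorrelationL2} collapses the autocorrelation to a quadratic form: $\autII_t(\x_0)=\tfrac12 e^{-2\theta t}\norm{H_t\sigma}_{\pi_G}^2=\tfrac12 e^{-2\theta t}\langle\sigma,H_{2t}\sigma\rangle_{\pi_G}$, using the semigroup identity $H_t^2=H_{2t}$ and the self-adjointness of $H_t$ in $L^2(\pi_G)$ (reversibility of the walk).

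Next I would evaluate $T_{\x_\alt}$. As $G$ is connected and bipartite with parts $V_1,V_2$, the parity function $\eta(v):=\ind{v\in V_1}-\ind{v\in V_2}$ satisfies $P\eta=-\eta$, and $\sigma_{\x_\alt}=-\eta$; hence $H_{2t}\sigma_{\x_\alt}=e^{-4(1-\theta)t}\sigma_{\x_\alt}$, and since $\norm{\sigma_{\x_\alt}}_{\pi_G}^2=1$ we get $\autII_t(\x_\alt)=\tfrac12 e^{-(4-2\theta)t}$. By \cref{def:Tx0}, $T_{\x_\alt}$ is the solution of $\tfrac12 e^{-(4-2\theta)t}=\tfrac1n$, i.e.\ $T_{\x_\alt}=\frac{\log n-\log 2}{4-2\theta}=\frac{1+O(1/\log n)}{4-2\theta}\log n$. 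Feeding this into \cref{thm:main} gives $\tmix^{\x_\alt}(\epsilon)=(1+o(1))\max\{T_{\x_\alt},\tfrac1{4\theta}\log n\}=\frac{1+o(1)}{\min\{4-2\theta,\,4\theta\}}\log n$, where the $O(1/\log n)$ from $\log 2$ is negligible against the $O(\log^{-\gAlpha/2}n)$ error of \cref{thm:main}; this is \cref{eq:mixingbipartite}.

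For the optimality statement I would show that $\x_\alt$ minimizes this quadratic form at every time. All eigenvalues of $P$ lie in $[-1,1]$, so those of $H_{2t}=e^{2(1-\theta)t(P-I)}$ lie in $[e^{-4(1-\theta)t},1]$; expanding $\sigma_{\x_0}$ in an orthonormal $\pi_G$-eigenbasis of $P$ yields $\langle\sigma_{\x_0},H_{2t}\sigma_{\x_0}\rangle_{\pi_G}\ge e^{-4(1-\theta)t}\norm{\sigma_{\x_0}}_{\pi_G}^2=e^{-4(1-\theta)t}$, hence $\autII_t(\x_0)\ge\autII_t(\x_\alt)$ for all $t\ge0$. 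By \cref{def:Tx0} this forces $T_{\x_0}\ge T_{\x_\alt}$ for every $n$, so $\max\{T_{\x_0},\tfrac1{4\theta}\log n\}\ge\max\{T_{\x_\alt},\tfrac1{4\theta}\log n\}$; since the $o(1)$ in \cref{thm:main} is uniform in $\x_0$ and in the graph within the fixed class $(\gAlpha,\gKap)$, we get $\tmix^{\x_0}(\epsilon)\ge(1-o(1))\tmix^{\x_\alt}(\epsilon)$ and thus $\liminf_n\tmix^{\x_0}(\epsilon)/\tmix^{\x_\alt}(\epsilon)\ge1$. The Ising claim is the special case $G^{(n)}=\Z_n$ ($n$ even), which is bipartite (even/odd residues) and satisfies \cref{eq:expansion} since $|B_v(r)|\le 2r+1$.

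I do not anticipate a genuine obstacle: once \cref{thm:main} is granted, this is a short computation hinging on one structural fact—the bottom eigenfunction of simple random walk on a connected bipartite graph is the parity function—which is exactly what singles out $\x_\alt$. The two points to handle with care are the bookkeeping of the genealogical rates in \cref{eq:dual-plan} (jump rate $1-\theta$, killing rate $\theta$, hence survival factor $e^{-\theta t}$ and heat time $(1-\theta)t$), and the uniformity of the error term of \cref{thm:main} in $\x_0$ and in $G$, which is what makes the comparison $\tmix^{\x_0}\ge(1-o(1))\tmix^{\x_\alt}$ a true $1-o(1)$ rather than merely a bounded ratio.
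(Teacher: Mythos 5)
Your proposal is correct and follows essentially the same route as the paper: the paper cites \cref{prop:autocorrelation} together with \cref{rem:T-xalt} (the parity function is the $\lambda=-1$ eigenfunction of $P$, giving $\autII_t(\x_\alt)=\tfrac12 e^{-(4-2\theta)t}\le\autII_t(\x_0)$, hence $T_{\x_0}\ge T_{\x_\alt}$) and then feeds this into \cref{thm:main}. You simply re-derive the $q=2$ specialization of that spectral formula from the dual walk representation inline—obtaining $\autII_t(\x_0)=\tfrac12 e^{-2\theta t}\langle\sigma,H_{2t}\sigma\rangle_{\pi_G}$—instead of quoting \cref{prop:autocorrelation}, and the eigenvalue bound and bookkeeping are all sound.
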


\Cref{cor:bipartite} shows that when $q=2$, alternating between the $2$ colors (when possible) gives the asymptotic fastest rate of convergence. In the case of the $d$-dimensional lattice $\Z_n^d$ for any $d\geq 1$, this results in ``checkerboard'' configurations being the fastest initial conditions.
However, when $q>2$, the answer can become more complicated. Consider again the $d$-dimensional lattice $G=\Z_n^d$: we can define a generalization of $\x_{\alt}$ by cycling over the $q$ colors in all directions, creating the \emph{rainbow} configuration (depicted in \cref{fig:2drainbowknight} on the left)
\begin{equation}\label{eq:rainbow}
\x_{\rainbow}(i_1,\ldots,i_d) = i_1+\ldots+i_d \pmod q\,, \quad (i_1,\ldots,i_d)\in\Z_n^d\,.
\end{equation}
It turns out that $\x_{\rainbow}$ does not always achieve the fastest rate. Indeed, we will see in \Cref{sec:applications} that
\[
T_{\x_{\rainbow}} \sim \frac{d}{2\big[1-(1-\theta)\cos(\frac{2\pi}{q})\big]}\log n\,,
\]
while for $d=2$ and $q=5$, if we define the ``knight'' configuration (depicted in \cref{fig:2drainbowknight} on the right)
\[
\x_{\knight}(i,j) = i+2j \pmod 5
\]
(taking after a chess knight move), then we will get
\[
T_{\x_{\knight}} \sim  
\frac{1}{1+(1-\theta)/4}\log n\,,
\]
and in particular $T_{\x_{\knight}} < (1-\delta) T_{\x_{\rainbow}}$ with $\delta=\frac{\sqrt5 (1-\theta)}{5-\theta} >0$ for all $\theta<1$ and every $n$ in this setting. The reason for this is that when $d>1$, there is more room to spread the colors while avoiding early repetitions (which $\x_\rainbow$ does not do effectively in $d\geq 2$, since we can encounter the same color we are currently at after making just two moves).

\begin{figure}
\centering
  \includegraphics[width=0.4\textwidth]{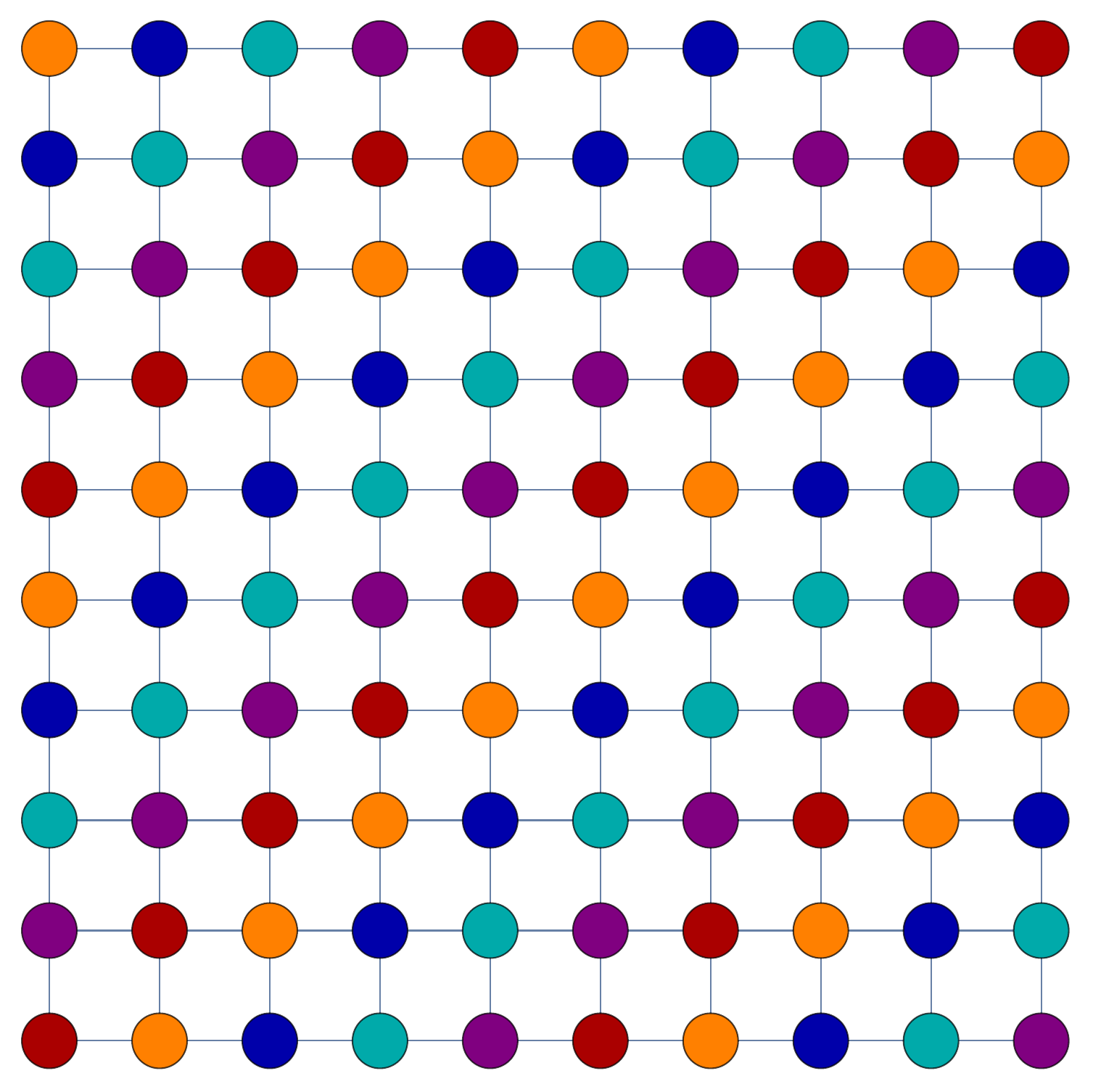}
\hspace{0.25in}  \includegraphics[width=0.4\textwidth]{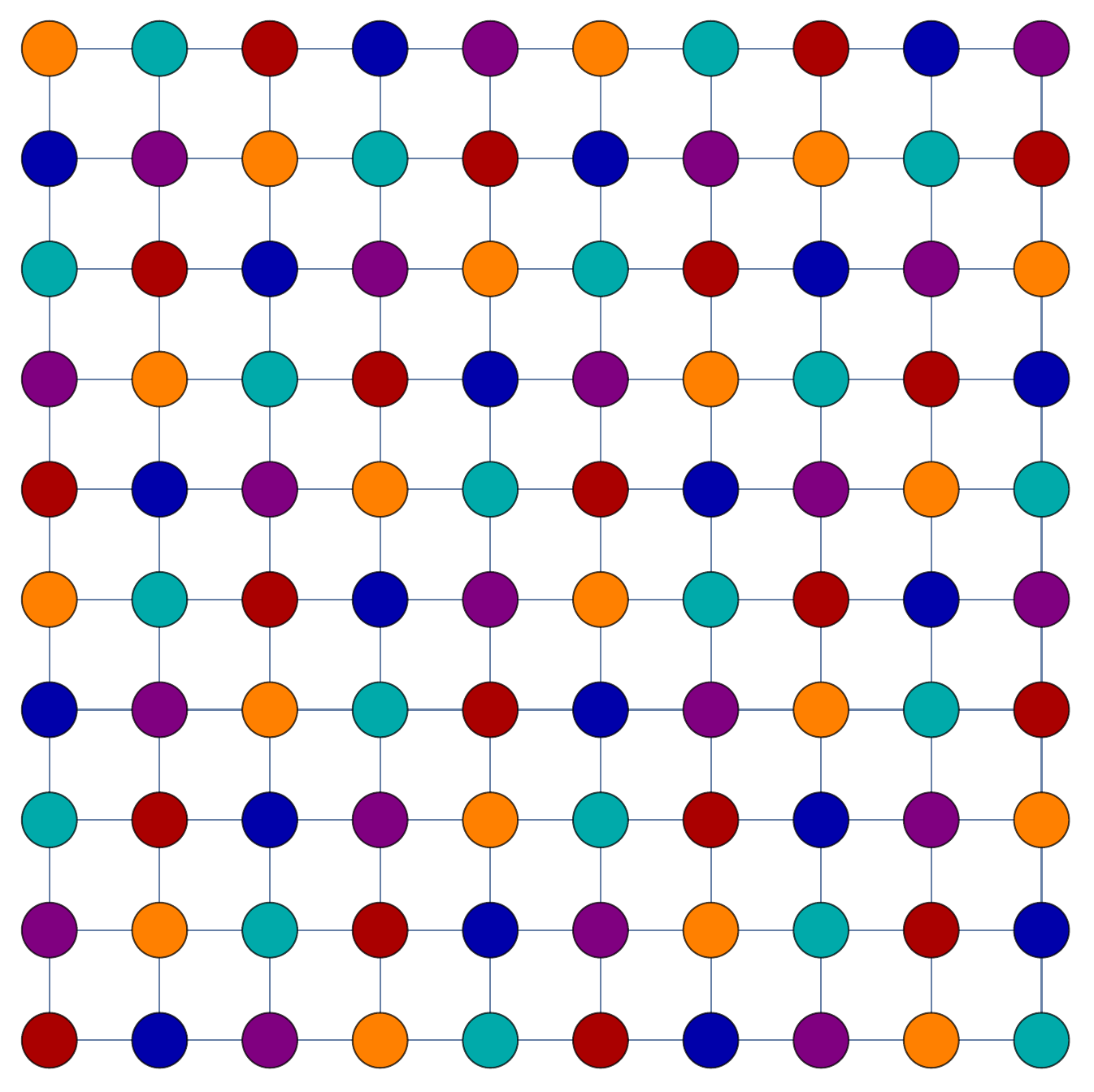}
\caption{In $\Z_n^2$ with $q=5$, starting from the rainbow configuration (left) leads to slower mixing compared to starting from the knight configuration (right).}
\label{fig:2drainbowknight}
\end{figure}
\begin{figure}
\centering
\includegraphics[width=0.55\textwidth]{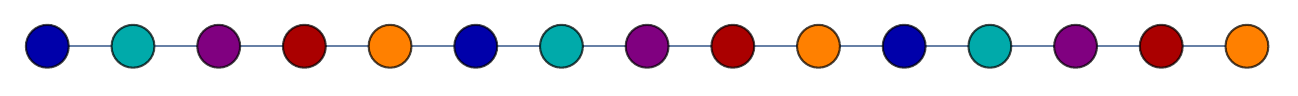}
\caption{Rainbow configuration on $\Z_n$ for $q=5$. On $\Z_n$, the rainbow initial condition is asymptotically the fastest for all $q \geq 2$ .}
\vspace{-0.1in}
\label{fig:rainbow}
\end{figure}

Nonetheless, we can show that when $d=1$, $\x_\rainbow=(0,1,\ldots,q-1,0,1,\ldots,q-1,0,1,\ldots)$ (depicted in \cref{fig:rainbow}) does indeed achieve the fastest possible rate (see also \cref{fig:Z-Tx0}), as we next formulate.

\begin{figure}
\vspace{-0.15in}
\centering
  \begin{tikzpicture}[font=\tiny]
  \node (fig1) at (-0.5,0) {  \includegraphics[width=0.4\textwidth]{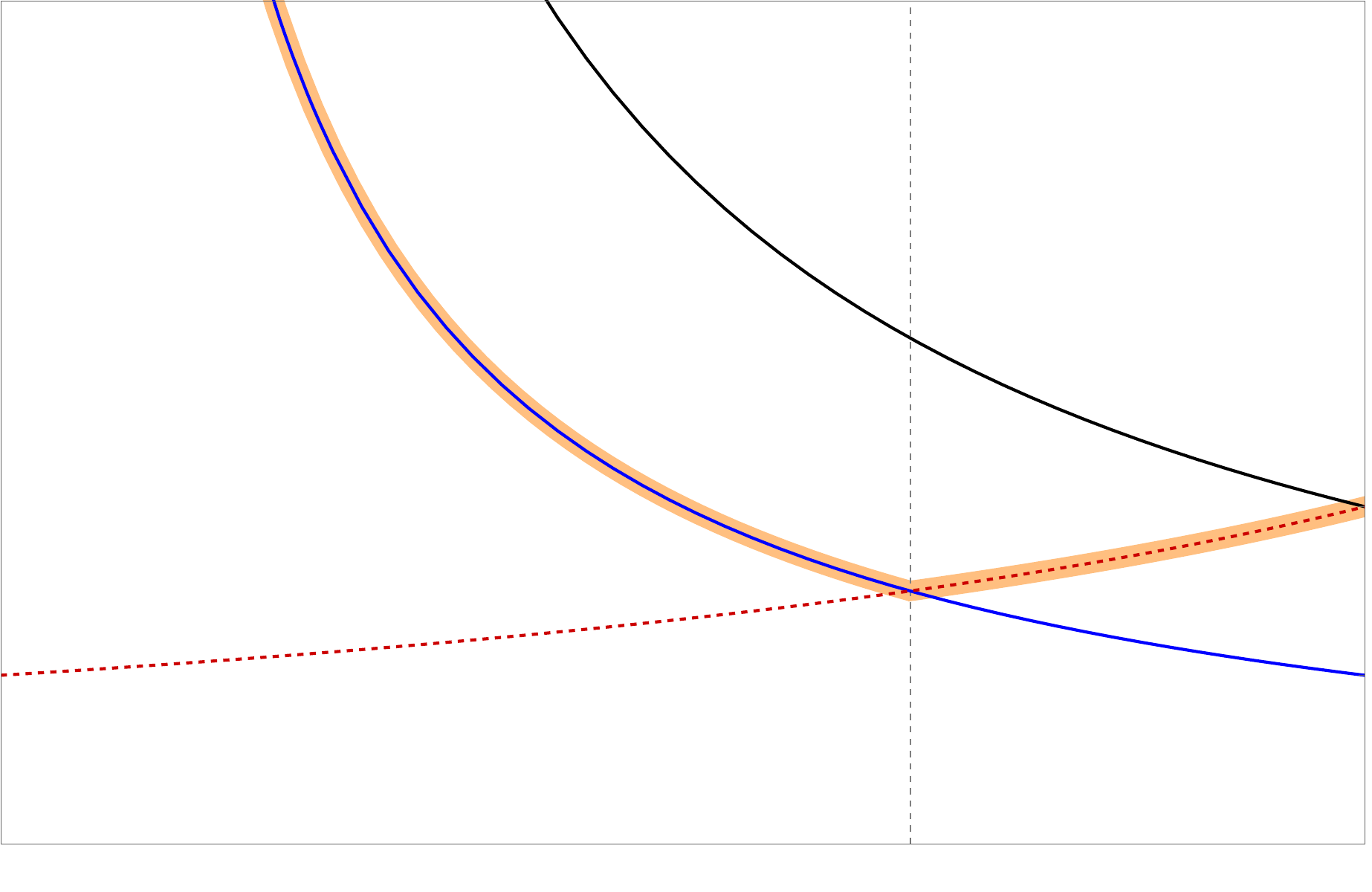}
  };
  \node at (-3.73,-2.1) {$0$};
  \node at (0.58,-2.1) {$\frac23$};
  \node at (2.75,-2.1) {$1$};
  \node at (3.2,-1.1) {$\frac14\log n$};
  \node at (3.2,-0.25) {$\frac12\log n$};
  
  \node (fig2) at (7,0) {  \includegraphics[width=0.4\textwidth]{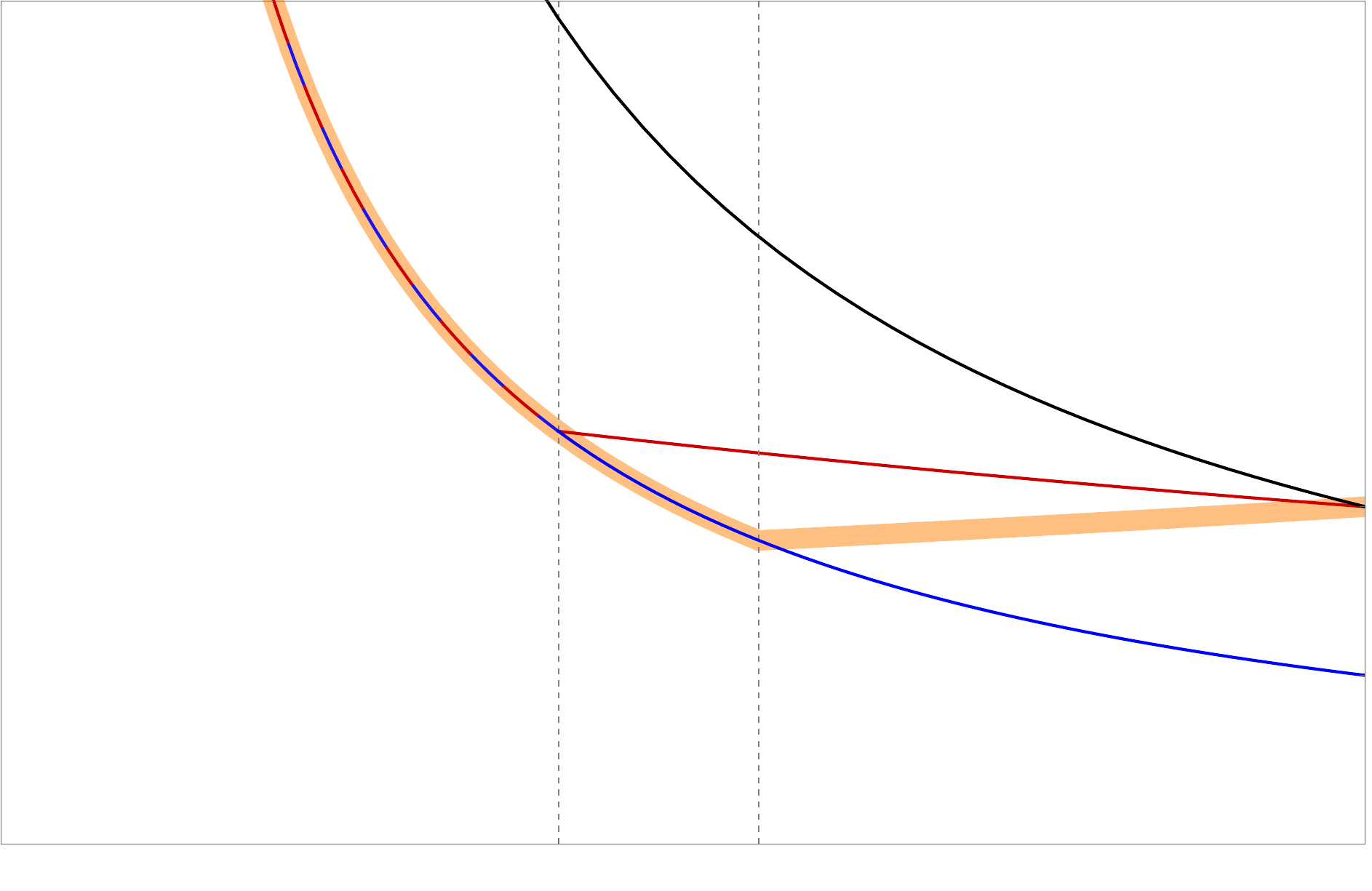}
  };

  \node at (3.73,-2.1) {$0$};
  \node at (6.35,-2.1) {$\frac{10-\sqrt{5}}{19}$};
  \node at (7.35,-2.1) {$\frac59$};
  \node at (10.25,-2.1) {$1$};
  \node at (10.7,-1.1) {$\frac14\log n$};
  \node at (10.7,-0.25) {$\frac12\log n$};
  
  \end{tikzpicture}
\vspace{-0.17in}
\caption{Mixing as a function of $\theta$ in $(\Z/n\Z)^d$. Left: $d=1$ and $q=2$ (previously known lower bound in red, $\x_\alt$ in orange). Right:  $d=2$ and $q=5$ ($\x_\rainbow$ in red, $\x_\knight$ in orange).}\label{fig:Z-Tx0}
\vspace{-0.1in}
\end{figure}

\begin{theorem}\label{thm:rainbow}
Let $G = \Z_n$. There exists $c>0$ such that, for every $q\geq 3$, every $\theta\in(0,1]$ and every initial state $\x_0$,
\begin{equation}\label{eq:rainbowlb}
\autII_t(\x_0) \geq 
\frac{c_0}{q^2}\, \autII_t(\x_\rainbow)\,.
\end{equation}
Consequently, $T_{\x_0} \geq T_{\x_\rainbow}+O(1)$, and thus $\x_\rainbow$ is the asymptotically fastest initial condition.
\end{theorem}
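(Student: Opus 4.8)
The plan is to pass to the spectral side on $\Z_n$, to compute $\autII_t(\x_{\rainbow})$ exactly, and to reduce \eqref{eq:rainbowlb} to a single lower bound on the low‑frequency spectral mass of an arbitrary colouring. Using the backward random‑walk representation of the autocorrelation (\cref{sec:autocorrelation}) — a single coordinate, run backwards, performs a random walk on $\Z_n$ that jumps to a uniform neighbour at rate $1-\theta$ and is killed at rate $\theta$ — one obtains, for $\x_0$ with colour classes $\chi_j(v)=\ind{\x_0(v)=j}$ of sizes $n_j$,
\[
\autII_t(\x_0)=\frac{e^{-2\theta t}}{n^2}\sum_{k=0}^{n-1}e^{-2t\lambda_k}\,S_k ,\qquad \lambda_k:=(1-\theta)\Bigl(1-\cos\tfrac{2\pi k}{n}\Bigr),
\]
where $S_k:=\sum_{j}|\widehat{\chi_j}(k)|^2$ for $k\neq0$ and $S_0:=\sum_j(n_j-n/q)^2$; equivalently $\autII_t(\x_0)=e^{-2\theta t}\bigl(\sum_r p_{2t}(r)\tilde\rho(r)-\tfrac1q\bigr)$ with $\tilde\rho(r)=\tfrac1n\#\{v:\x_0(v)=\x_0(v+r)\}$ (so $S_k/n=\widehat{\tilde\rho}(k)$ for $k\neq0$), and here $p_{2t}$ is the time‑$2t$ kernel of the rate‑$(1-\theta)$ walk $W$. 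For $\x_0=\x_{\rainbow}$ the classes are the residues mod $q$, so $S_0=0$, $S_{mn/q}=n^2/q$ for $1\le m\le q-1$ and $S_k=0$ otherwise, giving
\[
\autII_t(\x_{\rainbow})=\frac{e^{-2\theta t}}{q}\sum_{m=1}^{q-1}e^{-2t\lambda_{mn/q}}\;\le\;e^{-2(\theta+\lambda^*)t},\qquad \lambda^*:=\min_{1\le m\le q-1}\lambda_{mn/q}=(1-\theta)\Bigl(1-\cos\tfrac{2\pi}{q}\Bigr)
\]
(taking $q\mid n$, the general case costing only $O(q/n)$). Since $\cos$ decreases on $[0,\pi]$, one has $\lambda_k\le\lambda^*$ whenever $|k|_n:=\min(k,n-k)\le n/q$.

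Granting these two displays, \eqref{eq:rainbowlb} reduces to proving that for every $q$-colouring $\x_0$ of $\Z_n$ (with $n$ large in terms of $q$),
\[
\sum_{k:\,|k|_n\le n/q}S_k\;\ge\;\frac{c_0}{q^2}\,n^2 ,
\]
because this yields $\autII_t(\x_0)\ge \frac{e^{-2\theta t}}{n^2}e^{-2t\lambda^*}\sum_{|k|_n\le n/q}S_k\ge \frac{c_0}{q^2}e^{-2(\theta+\lambda^*)t}\ge \frac{c_0}{q^2}\autII_t(\x_{\rainbow})$. The remaining bookkeeping is soft: from $\autII_{T_{\x_0}}(\x_{\rainbow})\le \frac{q^2}{c_0}\autII_{T_{\x_0}}(\x_0)\le \frac{q^2}{c_0 n}$ and the one‑sided estimate $\autII_{t+s}(\x_{\rainbow})\le e^{-2s(\theta+\lambda^*)}\autII_t(\x_{\rainbow})$, after an extra time $s_0=O_{q,\theta}(1)$ one reaches $1/n$, so $T_{\x_{\rainbow}}\le T_{\x_0}+O(1)$; inserting $T_{\x_0}\ge T_{\x_{\rainbow}}-O(1)$ into \cref{thm:main} and using $\tmix^{\x_{\rainbow}}(\epsilon)\gtrsim\frac1{4\theta}\log n\to\infty$ gives $\liminf_n \tmix^{\x_0}(\epsilon)/\tmix^{\x_{\rainbow}}(\epsilon)\ge1$, i.e.\ $\x_{\rainbow}$ is asymptotically fastest.

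The crux — and what I expect to be the main obstacle — is the displayed spectral‑mass inequality. It will \emph{not} follow from positive‑definiteness of $\tilde\rho$ plus $\tilde\rho\ge0,\ \tilde\rho(0)=1$ alone: smoothing $S_k$ by a non‑negative Fej\'er‑type multiplier supported on $\{|k|_n\le M\}$ only yields $\sum_{|k|_n\le M}S_k\gtrsim n(M-n/q)$, which is vacuous at $M=n/q$ and, if $M$ is enlarged past $n/q$, degrades the comparison with $\autII_t(\x_{\rainbow})$ by a factor of order $t$ as $t\to\infty$ — precisely because $\x_{\rainbow}$'s own low‑band mass \emph{jumps} at $M=n/q$. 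So one must use that $\x_0$ is a genuine $q$-colouring. The route I would attempt writes, for $k\neq0$, $S_k=\frac1q\sum_{l=1}^{q-1}\bigl|\widehat{\phi_l}(k)\bigr|^2$ with $\phi_l(v):=\zeta^{-l\x_0(v)}$, $\zeta=e^{2\pi\mathrm{i}/q}$, unimodular functions satisfying $\phi_l=\phi_1^{\,l}$ (a cyclic group of order $q$ under pointwise multiplication) and $\widehat{\phi_l}(0)=0$ once $\x_0$ is balanced — the unbalanced case being handled directly by $S_0=\sum_j(n_j-n/q)^2$, which is already of order $n^2/q$ except when each class is close to balanced. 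The hard part will be to show that this group structure forbids $\phi_1,\dots,\phi_{q-1}$ from \emph{all} being supported on the high band $\{|k|_n>n/q\}$: some $\phi_l$ must carry $\Omega(n^2/q)$ of its (total $n^2$) spectral mass inside $\{|k|_n\le n/q\}$ — morally at the two ``rainbow modes'' $\pm n/q$ — and the averaging $S_k=\frac1q\sum_l|\widehat{\phi_l}(k)|^2$ is exactly what produces the $1/q^2$ in the statement. As a consistency check, moderate $t$ needs no such input, since $\autII_t(\x_0)\ge e^{-2\theta t}\bigl(\P(W_{2t}=0)-\tfrac1q\bigr)$ and $\P(W_{2t}=0)\asymp((1-\theta)t)^{-1/2}$ exceeds $1/q$ with room for all $t\lesssim q^2/(1-\theta)$; the genuine difficulty is therefore confined to the large‑$t$ regime.
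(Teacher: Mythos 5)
Your spectral setup is correct, and so is the reduction from \cref{eq:rainbowlb} to a lower bound on low-frequency spectral mass. Your bookkeeping at the end (from the $\autII$ comparison to $T_{\x_0}\geq T_{\x_\rainbow}+O(1)$ via \cref{eq:A2submultiplicativity1}, and then to the $\liminf$ statement via \cref{thm:main}) is also sound. But the proposal stops at what you yourself flag as the crux: you do not prove the claimed inequality
\[
\sum_{k:\,|k|_n\le n/q} S_k \;\geq\; \frac{c_0}{q^2}\,n^2\,,
\]
and you only sketch an approach (``morally at the two rainbow modes'') via the multiplicative structure of $\phi_1,\ldots,\phi_{q-1}$, without closing it. Dimension counting does not help (the high band has dimension $\approx n(q-2)/q \gg q$), and convolutions of high-band spectra can land anywhere, so the group structure does not obviously preclude all $\phi_l$ from sitting in the high band. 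This is a genuine gap, not a routine verification.

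More importantly, the alternative you dismiss is essentially the route the paper takes, and your reason for dismissing it is where the argument actually lives. You write that enlarging the window past $n/q$ ``degrades the comparison with $\autII_t(\x_{\rainbow})$ by a factor of order $t$'' and conclude the Fej\'er/tent approach is vacuous. The paper's proof (\cref{subsec:opt-1d}) works precisely because that factor of $t$ is cancelled. It enlarges the window to $N=\frac nq+\frac{n}{15+(1-\theta)t}$, applies a Welch-type bound (\cref{lem:terry}) to the $qN>n$ circularly-shifted vectors $\{\nu_l^{(k)}\}$ to get \emph{both} a tent-weighted estimate $\sum_{k,l}(N-|l|)|\nu^{(k)}(l)|^2\gtrsim n/(1-\theta)t$ and a flat estimate $\sum_{k,l}|\nu^{(k)}(l)|^2\gtrsim q/(1-\theta)t$, and then expands the exponential around $\cos(2\pi N/n)$. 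The Taylor remainder produces a factor $1+c\cdot\tfrac{N-|l|}{n}(1-\theta)t$ inside the sum: multiplying the tent-weighted bound by $(1-\theta)t$ recovers an $\Omega(1/q)$ contribution for large $(1-\theta)t$, while the flat bound covers moderate $(1-\theta)t$. The degradation from $\cos(2\pi N/n)$ versus $\cos(2\pi/q)$ is a bounded constant $e^{-4\pi}$ because of how $N$ shrinks with $t$. So the ``order-$t$ loss'' you identify is real, but it is \emph{exactly} offset by the tent weight, and that cancellation is the whole proof. Your target at $M=n/q$ exactly is strictly stronger than what the paper proves (at $N=n/q$ one has $qN=n$ so \cref{lem:terry} gives nothing), and it is not clear it is even true; as it stands the proposal leaves the hard step open.
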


The results so far only pertained to deterministic initial states. The effect of a random initial condition was explored in \cite{LubetzkySly16} for the 1D Ising model: instead of starting from a deterministic state $\x_0$, one assigns an i.i.d.\ uniform color to each site. This means starting from the initial configuration $\cU$, the uniform distribution over $\Omega$, or equivalently replacing $\P_{\x_0}(X_t\in\cdot)$ with its annealed version $\P_\cU(X_t \in \cdot):=\frac1{|\Omega|}\sum_{\x_0\in\Omega}\P_{\x_0}(X_t\in\cdot)$. Denote by $\tmix^{\cU}(\epsilon)$ the mixing time of the chain started at $\cU$ (the time for $\P_\cU(X_t \in \cdot)$ to be within $\epsilon$ of the equilibrium measure $\mu_G$ in total variation distance).
It was shown in~\cite{LubetzkySly16} that, for any fixed $0<\epsilon<1$, Glauber dynamics for Ising on $(\Z/n\Z)$ has 
\[ \tmix^{\cU}(\epsilon) \sim \frac{1}{4\theta}\log n\]  (that is, the bound in \cref{eq:mixing} but without the autocorrelation term $T_{\x_0}$). We can now extend this result to the noisy voter model in the same setting as \Cref{thm:main}. Furthermore, as the lower bound $\tmix^{\x_0} \geq \frac{1-o(1)}{4\theta}\log n$ in \cref{eq:mixing} holds for all deterministic starting states, we can deduce that the uniform initial condition is asymptotically fastest compared to all deterministic starting states.

\begin{corollary}\label{cor:uniform}
Fix $\theta\in(0,1]$ and $q\geq2$. Let $G=(V,E)$ be a connected graph on $n$ vertices satisfying \eqref{eq:expansion} for some $\gKap>0$ and $0<\gAlpha<1$, and $\cU$ be the uniform distribution on $\{0,\ldots,q-1\}^V$. Then, for every fixed $0<\epsilon<1$, the $q$-state noisy voter model on $G$ with noise $\theta$ started from~$\cU$ has
\begin{equation}
\tmix^{\cU}(\epsilon) = 
\frac{1+o(1)}{4\theta}\log n\,,\label{eq:uniformmixing}
\end{equation}
where the $o(1)$-term is $ O\big(\log^{-\gAlpha/2} n\big)
$ with a constant 
%
depending on $\epsilon,\theta,q,\gKap,\gAlpha$ (but not $G$), and
\begin{equation}\label{eq:tmix-asymp} \tmix(\epsilon)=\frac{1+o(1)}{2\theta}\log n\,,\end{equation}
i.e., the asymptotic worst-case mixing time (attained by the monochromatic $\x_0=\bf{1}$) is twice slower.
Moreover, if $G^{(n)}$ is a sequence graphs satisfying \cref{eq:expansion} for the same $\gAlpha,\gKap$, then for any $\x_0=\x_0^{(n)}$,
\[
\liminf_{n \to \infty}\frac{\tmix^{\x_0}(\epsilon)}{\tmix^{\cU}(\epsilon)} \geq 1\,,
\]
i.e., asymptotically, no deterministic initial state $\x_0$ is faster than the uniform initial condition $\cU$.
\end{corollary}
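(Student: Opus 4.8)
The plan is to deduce all parts of \cref{cor:uniform} from \cref{thm:main} and its proof. The corollary makes three assertions: (a) cutoff from the uniform start $\cU$ at $\tfrac1{4\theta}\log n$ (i.e.\ \cref{eq:uniformmixing}); (b) that the worst-case mixing time is $\tfrac1{2\theta}\log n$, attained at the monochromatic $\x_0=\bfone$ (i.e.\ \cref{eq:tmix-asymp}); and (c) that no deterministic start is asymptotically faster than $\cU$. Given (a), assertion (c) follows at once, since \cref{thm:main} already yields $\tmix^{\x_0}(\epsilon)\ge(1-o(1))\tfrac1{4\theta}\log n$ for every deterministic $\x_0$; so the work is in (a) and (b). (We assume $\theta<1$ henceforth; the case $\theta=1$, in which $\cU$ is already stationary, is immediate.)

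For (a): the first point is that the uniform start carries no autocorrelation. Since $\cU$ and the updating rules of the model are each invariant under permuting the $q$ colors, so is the law $\P_\cU(X_t\in\cdot)$; hence $\P_\cU(X_t(v)=j)=\tfrac1q$ for all $v,j,t$, the analogues of $\autI_t$ and $\autII_t$ for the uniform start vanish identically, and the analogue of \cref{def:Tx0} is $T_\cU=0$. I would then argue that the upper-bound half of \cref{thm:main} applies with $\cU$ in place of a deterministic configuration: in the duality / update-history representation on which it rests (cf.\ \cref{sec:duality}), the initial condition enters only through the colors assigned to the backward traces that survive all the way back to time $0$, and under $\cU$ those colors are i.i.d.\ uniform---indistinguishable from the effect of a noise update---so the same $L^2$ estimate carries over, now with the single-site contribution (the source of the $T_{\x_0}$ term) identically zero; this gives $\tmix^\cU(\epsilon)\le(1+o(1))\tfrac1{4\theta}\log n$ with error inheriting the $O(\log^{-\gAlpha/2}n)$ bound of \cref{thm:main}. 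For the matching lower bound I would run the distinguishing-statistic argument of the lower-bound half of \cref{thm:main} with $f(\sigma)=\sum_{(u,v)\in E}(\one_{\sigma(u)=\sigma(v)}-\tfrac1q)$: reading off $X_t(u),X_t(v)$ from their backward traces shows that $\E_{\mu_G}[f]$ exceeds the mean of $f$ under $\P_\cU(X_t\in\cdot)$ by $\Theta(|E|\,e^{-2\theta t})$ up to a subexponential-in-$t$ factor---the gap being produced precisely by pairs of traces that are both alive and unmerged at time $0$ and coalesce only afterward, an event of probability $\asymp e^{-2\theta t}$ up to such a factor---while \cref{eq:expansion} makes the covariances of the summands of $f$ summable under both laws, so both variances are $O(|E|)=O(n)$; Chebyshev's inequality then forces the two laws to have total variation distance $1-o(1)$ whenever $|E|\,e^{-2\theta t}\gg\sqrt{|E|}$, i.e.\ for $t\le(1-\delta)\tfrac1{4\theta}\log n$. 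Together these give \cref{eq:uniformmixing} and hence, as noted, assertion (c).

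For (b): I would write $X_t(v)$ through its backward trace---a random walk that, independently of its trajectory, is terminated by a noise update at a rate-$\theta$ exponential time and so survives duration $t$ with probability exactly $e^{-\theta t}$---to get $\P_{\x_0}(X_t(v)=\x_0(v))-\tfrac1q=\sum_w p_w\,(\one_{\x_0(w)=\x_0(v)}-\tfrac1q)\le(1-\tfrac1q)\sum_w p_w=(1-\tfrac1q)e^{-\theta t}$, where $p_w$ is the probability that the trace from $v$ sits at $w$ at time $0$, and with equality for every $v$ when $\x_0=\bfone$. Thus $\autI_t(\x_0)\le\autI_t(\bfone)=(1-\tfrac1q)e^{-\theta t}$ for every deterministic $\x_0$, so $T_{\x_0}\le T_{\bfone}=\tfrac1{2\theta}\bigl(\log n+\log(1-\tfrac1q)\bigr)=\tfrac{1+o(1)}{2\theta}\log n$, which dominates $\tfrac1{4\theta}\log n$. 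Since the $1+o(1)$ in \cref{thm:main} is uniform in $\x_0$, $\tmix(\epsilon)=(1+o(1))\max_{\x_0}\max\{T_{\x_0},\tfrac1{4\theta}\log n\}=(1+o(1))\tfrac1{2\theta}\log n$, attained at $\x_0=\bfone$, which is \cref{eq:tmix-asymp}.

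The step I expect to be the main obstacle is the one in the second paragraph: verifying that the proof of \cref{thm:main}---both its duality-based upper bound and its second-moment lower bound---remains valid when the deterministic initial configuration is replaced by the random configuration $\cU$. This is conceptually transparent from the trace picture (a $\cU$-colored surviving trace behaves exactly like one cut off by a noise update), but it requires revisiting that proof rather than quoting \cref{thm:main} as a black box, since $\cU$ is not among the initial conditions it formally covers. The remaining ingredients---the color-symmetry argument for $T_\cU=0$, the elementary trace computation pinning down $\bfone$ as the extremal deterministic state, and the Chebyshev bound---are routine.
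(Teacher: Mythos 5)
Your proposal is correct and follows essentially the same route as the paper: upper bound by observing $\autII_t(\cU)\equiv 0$ and re-running the Theorem~\ref{thm:main} upper-bound argument with $\cU$ in place of $\x_0$ (the paper's Remark~\ref{remark:uniform}); lower bound via the pair-agreement statistic and the probability $\P(u \overset{>t}{\leftrightsquigarrow} v)$ of late coalescence (the paper's Remark~\ref{remark:uniformlb}); and the worst-case asymptotic via $\autI_t(\x_0)\le\autI_t(\bfone)=(1-\tfrac1q)e^{-\theta t}$, which the paper derives in Remark~\ref{rem:T-1} from the eigenfunction expansion of Proposition~\ref{prop:autocorrelation} while you derive it directly from the killed-walk representation --- an equivalent and slightly more elementary computation.
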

(NB. \cref{eq:tmix-asymp} extends~\cite[Thm~1,~Eq.(1.2)]{CPS16}---which was valid for all bounded-degree graphs---from $q=2$ to all $q\geq 2$, albeit for graphs that further have subexponential growth of balls.) 

\subsection{Heuristics and main proof ideas}\label{sec:proof-ideas}
\subsubsection*{The autocorrelation time}
As mentioned above, in the binary case $q=2$, it was shown in~\cite{CPS16} that the worst-case mixing time for bounded-degree $n$-vertex graphs is $(1+o(1))\frac{1}{2\theta}\log n$, i.e., the same as the chain where the noise probability is $1$ (no vote-copying mechanism) and sites are independently refreshed at rate $\theta$ (a random walk on the hypercube).
This can be understood as follows: looking at the dynamics backward in time (as discussed in \cref{sec:duality}) shows that sites coalesce into clusters (arising from the copying mechanism) that receive an identical color. Under a monochromatic initial configuration $\x_0$, say $\x_0= \bfone$, a cluster roughly behaves like a single site, attaining its color by either getting a noisy update (with rate $\theta$) or surviving the noise to accept the color (in this case, $1$) assigned by $\x_0$. In graphs with bounded degree, said clusters typically have $O(1)$ size, thus one sees $c n$ clusters behaving as an infinite temperature (independent spins) single-site dynamics that evolve at rate $\theta$, leading to the aforementioned mixing time. 

The role of the autocorrelation function $\autII_t(\x_0)$ is to quantify the effect of a general initial condition $\x_0$ under the same assumption that the single-site marginals govern the mixing time. Indeed, the high temperature nature of the noisy voter model and the worst-case intuition above suggest that one could treat the latter as a product chain, and it is known that the total variation ($L^1$) distance of a product chain can be bounded, from below and above, by the sum of (squares of) the $L^2$ distances to equilibrium of its coordinates (see \cite[Prop.~7]{LubetzkySly14}). This motivates the introduction of the function $\autII_t(\x_0)$, which is nothing but that $L^2$ quantity for the noisy voter model (see \cref{def:autocorrelationL2}, where the sum over $j$ is simply $q \| \P_{\x_0}(X_t(v)\in\cdot)- \nu\|_{L^2(\nu)}^2$ for $\nu$ the uniform distribution on $\{0,\ldots,q-1\}$).
Hence, if we were to run $n$ i.i.d.\ copies of the chain $(X_t^v)_{v\in V}$, and take the value of each $v\in V$ from its own dedicated instance $X_t^v(v)$, we would have cutoff at $T_{\x_0}$. This approximation turns out to be valid in the true model for \textit{sufficiently large} $\theta$ (and all $q$), as confirmed by \cref{thm:main} for every initial condition $\x_0$.
However, as the next subsection shows, for some choices of $\x_0$ and small $\theta$, the function $\autII_t(\x_0)$ does not govern the mixing time.

\subsubsection*{The correlation time}
One key feature that the heuristics of single-site marginals fails to capture is, by its very definition, the correlations between sites in the equilibrium measure $\mu_G$. This aspect  can be measured by the \emph{coupling from the past} description of $\mu_G$; deferring details to \cref{sec:duality,sec:couplingfromthepast}, one can perfectly simulate $\mu_G$ via coalescing continuous-time random walks from every site, which move at rate $1-\theta$ and are killed at rate $\theta$, generating a single uniformly chosen color to their entire coalescent cluster as they die. 
Via this description, one can read the correlations in $\mu_G$ from the coalescent clusters (conditional on the partition to clusters, every cluster has a single uniform color,
and distinct ones are independent). 
When attempting to generate $\P_{\x_0}(X_t\in\cdot)$ via the same recipe, there is a time constraint: once we reach time $t$, the coalescing process ends (surviving clusters take on the colors prescribed by $\x_0$).

With that in mind, let $\big\{u \overset{>t}{\leftrightsquigarrow} v\big\}$ denote the event that the continuous-time walks from $u$ and $v$ (moving at rate $1-\theta$ and being killed at rate $\theta$) do not coalesce within time $t$ but do so afterwards. The above discussion indicates that the quantities $
\P\big(u \overset{>t}{\leftrightsquigarrow} v\big)
$
must be suitably small in order to avoid the situation whereby $u,v$ are more correlated under $\mu_G$ than under $\P_{\x_0}(X_t\in\cdot)$.
We will see that, for graphs with subexponential growth of balls as per \cref{eq:expansion}, averaging over nearest-neighbors is enough to asymptotically characterize this effect. Namely, let
\begin{equation}\label{def:taucorrelation}
T_{\corr} = \inf\Big\{ t\geq 0\,:\; \sum_{uv\in E}
\P\big(u \overset{>t}{\leftrightsquigarrow} v\big)
< \sqrt{n}\Big\}\,. 
\end{equation}

Clearly, $\P\big(u \overset{>t}{\leftrightsquigarrow} v\big)\leq e^{-2\theta t}$, simply by asking that the two random walks survive (no killing) separately in $[0,t]$. For our class of graphs, a matching lower bound holds up to sub-exponential corrections, thus $T_{\corr}$ will reduce (up to lower order terms) to $\frac{1}{4\theta}\log n$, as featured in \cref{thm:main}. 

The intuition as to why $T_{\corr}$ can govern the mixing time at \emph{sufficiently small} $\theta$ is the following.  Consider again the single-site marginals. When $\x_0$ is monochromatic, these are only affected by the noisy updates (at rate $\theta$), whereas in general, a random walk on the values of $\x_0$ (at rate $1-\theta$) can also contribute to mixing (e.g., consider 
a ``random-like'' $\x_0$, where the colors are arranged so as to resemble the effect of a noisy update). In view of their rates, the latter mechanism can have a stronger effect when $\theta$ is small, whence the single-site marginals can mix faster than the time it takes the correlations to be created. In that scenario, $T_{\x_0}<T_\corr$, which means that at time $T_{\x_0}$ the vertices are not sufficiently correlated and thus $T_{\x_0}$ is only an under-estimate of $\tmix$. \Cref{thm:main} confirms this intuition and shows that waiting for (the maximum between) $T_{\x_0}$ and $T_\corr$ is all that is needed for mixing to occur.

\subsubsection*{Proof ideas, upper bounds}

The work ~\cite{CPS16} proved an optimal worst-case upper bound for a large class of $2$-state noisy voter models, but one cannot hope for that approach to be applicable for general $\x_0$, as we next explain.
At the heart of that argument was a result from ~\cite{evans2000broadcasting} on noisy trees to view the backward cluster decomposition of the noisy voter model (explained in \cref{sec:duality}) as a projection of an equivalent chain on ``stringy trees,'' breaking the dependencies between sites. This elegant reduction provided a direct upper bound on the total variation distance via that of a chain with $n$ independent bits that can only change their initial value at rate $\theta$ (i.e., a random walk on the hypercube). At that point, $\x_0$ becomes irrelevant, as by symmetry every initial condition is equivalent to the monochromatic $\x_0=\bfone$ after said reduction.

Instead, our proof follows the approach of the later work~\cite{LubetzkySly21}, which analyzed the alternating and bi-alternating initial configurations for the 1D Ising model. The strategy there was to develop the backward dynamics of the Ising model for a short time, which suffices for the noise mechanism to decouple poly-logarithmic clusters of sites; then, the model was treated as a product chain on such smaller clusters, which could later be simplified to having size $\leq L$ for some fixed $L>0$. Thereafter, the $L^1$-$L^2$ reduction mentioned at the beginning of this section allowed the authors to 
conclude the proof, thanks to the regularity of the lattice and the initial states $\x_0$ considered there.

In our setting, the sub-exponential growth of balls assumption \cref{eq:expansion} allows us (with mostly minor changes) to make a similar decomposition as in ~\cite{LubetzkySly21}---reducing to sub-polynomial clusters, then to clusters of size $O(1)$---thus reduce the analysis of said clusters after an $L^1$-$L^2$ reduction. However, in the second part of the proof, the potential irregularity of $G$ and $\x_0$ and the general $q\geq 2$ form a nontrivial obstacle. The work~\cite{LubetzkySly21} showed that the mixing of these $O(1)$ clusters is governed by a competition between $1$-point and $2$-point correlations functions, which is easily resolved when the initial condition $\x_0$ is periodic; this is no longer the case for general $G$ and $\x_0$. 
Our strategy is to not treat these competitions separately per cluster, but rather combine them, reducing the cumulative $1$-point contributions to the autocorrelation function $t\to\autII_t(\x_0)$. The latter is shown to be completely monotone in \cref{sec:autocorrelation}, and that allows us to identify the global dominant term out of $\autII_t(\x_0)$ and $e^{-4\theta t}$, leading to an upper bound of either $T_{\x_0}$ or $\frac{1}{4\theta}\log n$, respectively.

\subsubsection*{Proof ideas, lower bounds}

The $\tmix^{\x_0} \gtrsim T_{\x_0}$ part of \cref{eq:mixing} is achieved by taking a distinguishing statistics that yields an expected difference of $\autII_t(\x_0)$ under $\P_{\x_0}^t$ and $\mu$. 

The $\tmix^{\x_0} \gtrsim \frac{1}{4\theta} \log n$ part of \cref{eq:mixing} is more complicated. In~\cite{LubetzkySly21}, the analogous bound for the alternating and bi-alternating conditions for $1$D Ising was established using the Hamiltonian of the model $\sum_i\sigma(i)\sigma(i+1)$ as a distinguishing statistics. The proof relied on the fact that the alternating configuration $x_\alt$ was a minimizer of the Hamiltonian, whereas general $\x_0$ can lead to both smaller and larger expected values of the Hamiltonian at finite time $t$ compared to equilibrium. Thus, this approach did not generalize to arbitrary $\x_0$, and it was left as a conjecture that the bound would hold more generally. Here we handle this lower bound in terms of site correlations, rather than the analogous spin agreements or disagreements measured by the Hamiltonian:
\begin{enumerate}
\item We consider a distinguishing statistics that measures the covariance between adjacent sites, and show that the latter are always more correlated at equilibrium than at time $t$, with a discrepancy of at least $\P\big(u \overset{>t}{\leftrightsquigarrow} v\big)$, uniformly over $\x_0$;
\item Using a result from ~\cite{OveisTrevisan12} that bounds the escape probability of random walk from a set in terms of its conductance, we show that under the sub-exponential growth of balls assumption \cref{eq:expansion} the probabilities above are equal to $e^{-2\theta t+o(t)}$ for $t \sim \log n$.
\end{enumerate}

\subsection{Organization of the paper}
In \Cref{sec:autocorrelation}, we prove properties of the two autocorrelation functions $\autI_t(\x_0),\autII_t(\x_0)$ that will be needed throughout the proofs of 
the main theorems. In \Cref{sec:applications}, we discuss specific applications of \Cref{thm:main} on $\Z_n^d$, as well as prove \Cref{thm:rainbow}. The proofs of \Cref{thm:main,cor:uniform} will be split between \Cref{sec:upperbound}, where we prove the upper bounds, and \Cref{sec:lowerbounds}, where we prove the lower bounds. Finally, \Cref{sec:open-prob} is devoted to concluding remarks on the new results, and open problems on generalizations / refinements thereof.

\section{Properties of autocorrelation in the noisy voter model}\label{sec:autocorrelation}

\subsection{Notation and preliminaries}

We refer to the spins/opinions/states at each vertex as \emph{colors}.
So far we denoted the $q$ colors by $\{0,\ldots,q-1\}$. In \Cref{sec:autocorrelation,sec:lowerbounds} it will be convenient to use as labels the $q$-th roots of unity:
\begin{equation}\label{eq:Cq-def}
\Cq = \left\{\omega_j = e^{2\pi i j/q} \,:\; \ j =0,\ldots,q-1\right\}\,.
\end{equation}

Given a graph $G=(V,E)$, a deterministic color configurations on $G$ will be denoted as $\x$, or $\x_0$ when we wish to emphasize that the configuration should be thought of as an initial condition. In contrast, states of the noisy voter chain at time $t$ will be denoted $X_t$ when starting from a deterministic initial condition $\x_0$, and we will often use $Y_t$ when starting from $\mu_G$, the stationary measure of the noisy voter model on $G$. We also use $Y$ to indicate a single sample from $\mu_G$.

\subsubsection{Duality with coalescing random walks}\label{sec:duality}
We now explain the classical duality between the (noisy) voter model and a system of (killed) coalescing random walks (cf.~\cite{GranovskyMadras95}).
Fix $t_0>0$, and view the dynamics resulting in the state $X_{t_0}$ as a space-time slab with $\x_0$ at the bottom and $X_{t_0}$ at the top. Moving in space (on the graph $G$) will be dubbed \emph{horizontal}, and moving in time will be dubbed \emph{vertical}.
The following description constructs $X_{t_0}=\{X_{t_0}(v)\}_{v \in V}$ by looking at the dynamics top-to-bottom (``backward in time''), rather than bottom-to-top (``forward in time'').

Let us start with a single vertex $v\in V$. We can reconstruct the value $X_{t_0}(v)$ of the state of $v$ at time $t_0$ as follows: starting from $v_0=v$ at the top, move down (backward in time) \emph{vertically} until the largest time $t_1<t_0$ where the $v_0$ received an update in the (forward) dynamics. (If no such update exists, we conclude that $X_{t_0}(v)=\x_0(v)$.)
If said update was noisy (which occurs with probability $\theta$) and resulted in a new color $\omega$, we conclude that $X_{t_0}(v)=X_{t_1}(v)=\omega$ and we can stop; if instead the update was a vote-copying one (with probability $1-\theta$), then $X_t(v)$ must be equal to $X_{t_1}(v_1)$, where $v_1$ is the neighbor of $v_0$ whose color was copied. We register this as a \emph{horizontal} move from $v_0$ to $v_1$, and proceed to recover $X_{t_1}(v_1)$ recursively (by examining the largest $t_2<t_1$ where $v_1$ was updated, and so forth).
Letting $s_0=0$ and $s_i = t_i - t_{i-1}$ ($i\geq 1$), the process defined by $Z^{v}_s = v_i$ for $s\in [s_i,s_{i+1})$  is a  continuous-time simple random walk on $G$ that starts at $v$, moves at rate $1-\theta$ and gets killed at rate $\theta$. Hence, if we let $\omega$ be a uniform color from $\{0,\ldots,q-1\}$ independent of $Z^v$, the value $X_t(v)$ can be coupled so that
\[
X_{t_0}(v) = \begin{cases}
\omega & \text{if $Z_s^v$ gets killed before time $t_0$}\\
\x_0(Z^v_{t_0}) & \text{if $Z_s^v$ survives to time $t_0$}
\end{cases}\,.
\]
(Note that the probability that $Z_{s}^v$ survives the the bottom is $e^{-\theta t_0}$.) 
All is left is to describe how the different walks $\{Z^v\}_{v\in V}$ interact with each other to determine the joint law of the $\{X_{t_0}(v)\}_{v \in V}$: for any two vertices $v,w$, as long as $Z_s^v$ and $Z_s^w$ do not meet each other, their steps (or killings) are due to updates occurring in the histories of distinct vertices and thus are independent of each other. However, if $Z_s^v$ and $Z_s^w$ meet for the first time at time $\tau$ (time $t_0-\tau$ in the forward dynamics), the process described above forces them to go through the exact same path (or be killed simultaneously) from $s=\tau$ until time $s=t_0$ (time $0$ in the slab). Hence, if two walks $Z^v$ and $Z^w$ ever meet, they proceed together from that point onward and thus \textit{coalesce into a single walk}. In particular, if the walks from $v$ and $w$ coalesce before time $s=t_0$ (that is, before reaching the bottom), then $v$ and $w$ inevitably obtain the same color (i.e., $X_{t_0}(v)=X_{t_0}(w)$). This holds in the same way for any number of vertices, meaning that any number of walks can coalesce into a single one, and the vertices such walks started from all receive the same color (a uniform one if the single walk resulted from the merging encounters a noisy update before reaching the bottom, or the value of $\x_0$ at the final location of the walk otherwise). See \Cref{fig:nvm-sim} for an illustration of this dual description.

\subsubsection{Dependence on the initial condition and coupling from the past} \label{sec:couplingfromthepast}

The duality explained above shows that $X_t$ depends on the initial condition $\x_0$ through the final locations of all the surviving walks (and their merging histories). In particular, if the walk $Z_s^v$ started from a vertex $v$ gets killed before time $t$ (i.e., before reaching the bottom), then the value $X_t(v)$ (as well as the values $X_t(w_i)$ at time $t$ of all the vertices $w_i$ whose walks $Z_s^{w_i}$ merged with $Z_s^v$ before the killing) is \textit{independent} of the initial condition $\x_0$. As a consequence, if all the walks started from $V$ die out (with or without coalescing with the other walks) before reaching the bottom, then $X_t$ becomes independent of $\x_0$.

This observation gives us a recipe to perfectly simulate the stationary measure $\mu_G$, a special case of the  \textit{coupling from the past} (CFTP) method (see \cite[\S25]{LevinPeres17}): take the walks $\{Z^v_s\}_{v \in V}$ defined above (that is, coalescing random walks with killing, running backwards in time from all vertices), but let them run until killed (without stopping them at time $t$ whence they would reach the bottom slab, hitting $\x_0$). With probability $1$, all of them will have died out at some point in the past (either coalescing with other walks or not). Then, the resulting $Y=\{Y(v)\}_{v \in V}$ obtained by assigning a uniform color to each cluster (each group of vertices whose associated random walks coalesced before dying) will be a perfect sample from $\mu_G$. (From the perspective of CFTP, the  coalescing random walk representation of the noisy voter model defines a grand coupling of the chains from all initial states, and if $t-T$ is a time where all walks coalesce, so do all the chains at time $t$.)

This description of $\mu_G$ allows us to couple $\P_{\x_0}(X_t\in\cdot)$ to $\mu_G$ by simply running the same coalescing random walks backward dynamics from time $t$, stopping at time $0$ to realize $X_t$ from $\x_0$ and then continuing back in the past to realize $Y\sim \mu_G$. In doing so, $X_t$ and $Y$ will be identical at the vertices whose associated random walks have died before reaching time $0$, but possibly not at the vertices $v$ that reached it, since in that case $X_t(v)$ takes the $\x_0$ value at the final location of $Z^v_s$ while $Y(v)$ will take an independent uniform value. As observed in~\cite{LubetzkySly16}, when starting from the uniform initial state $\cU$, the value of $X_t(v)$ can be coupled to $Y(v)$ even if $Z^v_s$ survived to time $0$, so long as it does not get to coalesce with another particle before being killed (one can simply match the value of $\cU$ to that of $Y$); this governed the behavior of $\tmix^\cU$ in Glauber dynamics for the  Ising model on $\Z/n\Z$, and will play the same role here for every $q\geq 2$ and every $G$ as per \cref{cor:uniform}.

\subsection{Alternative representations of autocorrelation} \label{sec:alternativerepresentations}

Let $G=(V,E)$ be a finite connected graph, and let $\pi_G$ be the stationary distribution of simple random walk on $G$, i.e.,
$\pi_G(v) = \frac{d_G(v)}{2|E|}$ for $v\in V$. We consider the $L^2$-inner product space on $\C^V$ with respect to $\pi_G$, via the Hermitian inner product
\begin{equation}\label{eq:inner-prod}
\langle f, g \rangle_{\pi_G} := \sum_{v \in V} f(v)\bar{g(v)}\pi_G(v)\qquad\mbox{for $f,g: V \to \mathbb{C}$}\,.
\end{equation}
Given $\x \in \Cq^V$, we define its $k$-th power  $\x^k = (\x(v)^k)_{v \in V}$ as the configuration obtained by taking $k$-powers component-wise (NB.\ $\Cq^V$ is closed under this operation, as $\Cq$ is a multiplicative group).

\begin{proposition}\label{prop:autocorrelation}
Let $G=(V,E)$ be a connected graph on $n$ vertices. Fix an initial state $\x_0 \in \Cq^{V}$. Let $\psi_1,\ldots,\psi_{n}$ be an $L^2(\pi_G)$-orthonormal basis of eigenfunctions of the transition matrix $P$ of the discrete-time simple random walk on~$G$, and let $\lambda_1,\ldots,\lambda_{n}$ be their corresponding eigenvalues. Further define
\begin{equation}\label{eq:gamma-def} \gamma_l = 1-(1-\theta)\lambda_l
\qquad\mbox{for $l=1,\ldots,n$}\,.
\end{equation}
Then, for every $t>0$,
\begin{align}
\autII_t(\x_0) = \frac1q \sum_{k=1}^{q-1} \sum_{v \in V} \pi_G(v)\big|\E_{\x_0}[X_t(v)^k]\big|^2
&= \frac1q \sum_{k=1}^{q-1} \sum_{l=1}^{n} |\langle \x_0^k, \psi_l\rangle_{\pi_G}|^2 e^{-2 \gamma_l t}\,.
\label{eq:At2-rep}
\end{align}
Consequently, for any $s \geq 0$, we have
\begin{align}
e^{-(4-2\theta)s}  \leq &\frac{\autII_{t+s}(\x_0)}{\autII_t(\x_0)} \leq e^{-2\theta s} \,,\label{eq:A2submultiplicativity1}
\end{align}
and in particular $\frac{q-1}q e^{-(4-2\theta)t} \leq \autII_t (\x_0) \leq \frac{q-1}q e^{-2\theta t}$.
\end{proposition}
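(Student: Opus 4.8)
\textbf{Proof plan for Proposition~\ref{prop:autocorrelation}.}
The plan is to first establish the spectral identity and then deduce the submultiplicativity bounds from it. For the first equality in \cref{eq:At2-rep}, I would use the labels $\Cq$ (the $q$-th roots of unity) together with the discrete Fourier/Parseval identity on the cyclic group: for a fixed $v$, writing $p_j = \P_{\x_0}(X_t(v)=\omega_j)$, the vector $(p_j - \tfrac1q)_j$ has discrete Fourier coefficients $\tfrac1q\sum_j p_j \omega_j^{-k} = \tfrac1q\E_{\x_0}[X_t(v)^{-k}]$ for $k=1,\dots,q-1$ (the $k=0$ coefficient vanishes since the $p_j$ sum to $1$). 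Parseval on $\Z/q\Z$ then turns $\sum_{j}(p_j-\tfrac1q)^2$ into $\tfrac1q\sum_{k=1}^{q-1}\big|\E_{\x_0}[X_t(v)^k]\big|^2$, and summing against $\pi_G(v)$ gives the first equality. (One should note $|\E[X_t(v)^{-k}]| = |\E[X_t(v)^k]|$ since $X_t(v)^{-k} = \overline{X_t(v)^k}$ on the unit circle, so the index range $1,\dots,q-1$ is unambiguous.)

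For the second equality, the key point is that each power $X_t(v)^k$, as a function of $v$, evolves under the (continuous-time) voter/random-walk semigroup. Concretely, using the duality with a killed coalescing random walk from \cref{sec:duality}: a single walk from $v$ moves at rate $1-\theta$, is killed at rate $\theta$, and $X_t(v) = \x_0(Z^v_t)$ on survival while $X_t(v)$ is a uniform color (hence $\E[\,\cdot^k]=0$ for $1\le k\le q-1$) on killing. Therefore $\E_{\x_0}[X_t(v)^k] = e^{-\theta t}\,\E[\x_0^k(Z^v_t)] = e^{-\theta t}\big(e^{t(1-\theta)(P-I)}\x_0^k\big)(v)$, i.e. $\E_{\x_0}[X_t(\cdot)^k] = e^{-t(I-(1-\theta)P)}\x_0^k$ as a vector in $\C^V$. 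Expanding $\x_0^k = \sum_l \langle \x_0^k,\psi_l\rangle_{\pi_G}\,\psi_l$ in the orthonormal eigenbasis, applying the semigroup (which multiplies $\psi_l$ by $e^{-(1-(1-\theta)\lambda_l)t} = e^{-\gamma_l t}$), and using that $\{\psi_l\}$ is $L^2(\pi_G)$-orthonormal (Parseval again) gives $\sum_v \pi_G(v)|\E_{\x_0}[X_t(v)^k]|^2 = \sum_l |\langle \x_0^k,\psi_l\rangle_{\pi_G}|^2 e^{-2\gamma_l t}$; dividing by $q$ and summing over $k$ yields \cref{eq:At2-rep}.

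For \cref{eq:A2submultiplicativity1}, I would work directly from the spectral representation: $\autII_{t+s}(\x_0) = \tfrac1q\sum_{k,l} c_{k,l}\, e^{-2\gamma_l s}\, e^{-2\gamma_l t}$ with $c_{k,l} = |\langle\x_0^k,\psi_l\rangle_{\pi_G}|^2 \ge 0$, so termwise $e^{-2\gamma_l s}\autII_t$-summands are being reweighted by factors $e^{-2\gamma_l s}$. Since $P$ is the transition matrix of simple random walk on a connected graph, $\lambda_l\in[-1,1]$, hence $\gamma_l = 1-(1-\theta)\lambda_l$ ranges in $[1-(1-\theta),\,1+(1-\theta)] = [\theta,\,2-\theta]$; thus $e^{-2(2-\theta)s} \le e^{-2\gamma_l s}\le e^{-2\theta s}$ for every $l$. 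Multiplying each nonnegative summand by a factor in that interval and summing preserves the two-sided bound, giving \cref{eq:A2submultiplicativity1} after dividing by $\autII_t(\x_0)$. The final display $\tfrac{q-1}{q}e^{-(4-2\theta)t}\le \autII_t(\x_0)\le \tfrac{q-1}{q}e^{-2\theta t}$ follows by taking $t=0$, $s=t$, and evaluating $\autII_0(\x_0) = \sum_v\pi_G(v)\sum_{j}(\one_{\{\x_0(v)=\omega_j\}}-\tfrac1q)^2 = \sum_v\pi_G(v)\,\tfrac{q-1}{q} = \tfrac{q-1}{q}$.

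The main obstacle is bookkeeping rather than conceptual: one must be careful that the eigenvalue $\lambda_l=1$ (the constant eigenfunction $\psi\equiv 1$) contributes $\gamma=\theta$, not $0$ — the $k\ge1$ powers $\x_0^k$ may have nonzero inner product with the constant function, and this is exactly the term producing the $e^{-2\theta t}$ leading behavior; and one must confirm there is no issue with $q$-th roots of unity appearing as eigenfunction coefficients (there isn't — $\langle\x_0^k,\psi_l\rangle_{\pi_G}$ is just a complex scalar, and only its modulus squared enters). A secondary point requiring a line of justification is that the continuous-time semigroup generated by the killed coalescing walk acts on single-site marginals exactly as $e^{-t(I-(1-\theta)P)}$; this is immediate from the duality description but should be stated, since it is the bridge between the probabilistic definition of $\autII_t$ and its spectral form.
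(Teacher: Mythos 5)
Your proof is correct and follows essentially the same route as the paper's: expressing $\autII_t$ via the quantities $|\E_{\x_0}[X_t(v)^k]|^2$, then using the killed-random-walk duality to write the single-site marginal as $e^{-t(I-(1-\theta)P)}\x_0^k$, and finally expanding in the eigenbasis. The only cosmetic difference is the first step: the paper introduces an i.i.d.\ copy $\tilde X_t$, writes $\sum_\omega \P(X_t(v)=\omega)^2 = \P(X_t(v)=\tilde X_t(v))$, and then applies the character-sum identity $\sum_{k=0}^{q-1}\omega^k = q\one_{\{\omega=1\}}$, whereas you invoke Parseval on $\Z/q\Z$ directly; these are equivalent presentations of the same orthogonality-of-characters computation and yield the identical intermediate formula $\frac1q\sum_{k=1}^{q-1}|\E_{\x_0}[X_t(v)^k]|^2$.
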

\begin{proof}
Recalling the definition of $\autII_t$ from \cref{def:autocorrelationL2}, one has
\begin{align}
\autII_t(\x_0)
= \sum_{v \in V}\pi_G(v)\Big(\big(\sum_{\omega \in \Cq}\P_{\x_0}\left(X_t(v) = \omega\right)^2\big) - \frac{1}{q}\Big)\,.
\label{eq:A2-2}
\end{align}
Note that if $\tilde X_t$ denotes an i.i.d.\ copy of $X_t$ (started from the same $\x_0$), we have
\[
\sum_{\omega \in \Cq}\P_{\x_0}\left(X_t(v) = \omega\right)^2 = \sum_{\omega \in \Cq}\P_{\x_0}\left(X_t(v) = \tilde X_t(v) =  \omega\right) = \P_{\x_0,\x_0}(X_t(v) = \tilde X_t(v))\,.
\]
Since $\sum_{k=0}^{q-1} \omega^k = q\one_{\{\omega = 1\}}$ for $\omega\in\Cq$, taking
$\omega = X_t(v)\tilde X_t(v)^{-1} = X_t(v) \overline{\tilde X_t(v)}$ (recall $|\tilde X_t(v)|=1$) shows that
\[
\P_{\x_0,\x_0}(X_t(v) = \tilde X_t(v)) = \frac{1}{q}\sum_{k=0}^{q-1} \E[X_t(v)^k \overline{\tilde X_t(v)^{k}}] = \frac{1}{q}\sum_{k=0}^{q-1} \big|\E[X_t(v)^k]\big|^2\,.
\]
Combining the last two displays with \cref{eq:A2-2}, the $k=0$ summand cancels the $-\frac1q$ term, yielding
\[
\autII_t(\x_0) = \frac1q \sum_{k=1}^{q-1} \sum_{v \in V} \pi_G(v)\big|\E_{\x_0}[X_t(v)^k]\big|^2\,.
\]
Recalling the coalescing random walks representation of \cref{sec:duality}, on the event that the continuous-time random walk from $v$ is killed before reaching time $0$, the contribution to $\E_{\x_0}X_t(v)^k$ is $\frac1q \sum_{\omega\in\Cq}\omega^k = 0$. Thus, if $W_t$ denotes a continuous-time simple random walk moving at rate $1-\theta$ (and without killing), and $H_t$ the semigroup associated to such a walk with rate $1$, then
\[
\big|\E_{\x_0}[X_t(v)^k]\big|^2 = e^{-2\theta t}\big|\E_{v}[\x_0(W_t)^k]\big|^2 = e^{-2\theta t} (H_{(1-\theta)t}\x_0^k)(v)\overline{(H_{(1-\theta)t}\x_0^k)(v)}\,,
\]
which, when combined with the last display, gives
\[
\autII_t(\x_0) = \frac1q e^{-2\theta t}\sum_{k=1}^{q-1} \langle H_{(1-\theta)t} \x_0^k, H_{(1-\theta)t} \x_0^k\rangle_{\pi_G}\,.
\]
Recall that $\psi_l$ is an eigenfunction of the discrete-time transition matrix $P$ with eigenvalue $\lambda_l$; hence, it is an eigenfunction for $H_t = e^{-t(I-P)}$ with eigenvalue $e^{-(1-\lambda_l)t}$. Writing $\x_0^k = \sum_{l=1}^{n}\langle \x_0^k, \psi_l\rangle_{\pi_G}\psi_l$ via the orthonormal basis $\{\psi_l\}$ in $L^2(\pi_G)$, we arrive at
\[
\autII_t(\x_0) = \frac1q \sum_{k=1}^{q-1}\sum_{l=1}^{n}|\langle \x_0^k, \psi_l\rangle_{\pi_G}|^2 e^{-2\theta t-2(1 -\lambda_l)(1-\theta)t} = \frac1q \sum_{k=1}^{q-1} \sum_{l=1}^{n} |\langle \x_0^k, \psi_l\rangle_{\pi_G}|^2 e^{-2(1 -(1-\theta)\lambda_l)t}\,,
\]
establishing \cref{eq:At2-rep}.
The inequalities in \cref{eq:A2submultiplicativity1} readily follow by noting that $e^{-2 \gamma_l s}$ is minimized at $\lambda_l=-1$ (whence $\gamma_l = 2-\theta$) and maximized at $\lambda_l=1$ (whence $\gamma_l=\theta$). Finally, the upper and lower bounds on $\autII_t(\x_0)$ follow from \cref{eq:A2submultiplicativity1} (applying it for $s=t_0$ and $t=0$ to bound $\autII_{t_0}(\x_0)$ using that $\autII_0(\x_0)=(q-1)/q$ by definition).
\end{proof}

Two simple consequences of \Cref{prop:autocorrelation} are the following.
\begin{remark}[$T_{\x_0}$ maximized at the monochromatic $\bfone$]\label{rem:T-1}
As the monochromatic initial state $\bfone$ is an eigenfunction of the discrete-time random walk with eigenvalue $\lambda=1$, the identity in \cref{eq:At2-rep} (noting $\bfone^k = \bfone$ for all $k=1,\ldots,q-1$), along with the bounds on $\autII_t(\x_0)$ in \cref{prop:autocorrelation},
show
\[ \autII_{t}(\bfone) = \frac{q-1}q e^{-2\theta t} \geq \autII_t(\x_0)\qquad\mbox{for every $\x_0$ and $t>0$}\,;\]
hence, by definition, $T_{\x_0} \leq T_{\bfone}\sim \frac1{2\theta}\log n$ holds for every $\x_0$. 
\end{remark}

\begin{remark}[$T_{\x_0}$ minimized at $\x_\alt$ for a bipartite $G$ and $q=2$] \label{rem:T-xalt}
Since $G$ is bipartite, $\x_\alt$ is an eigenfunction of the discrete-time random walk with eigenvalue $\lambda=-1$, thus \Cref{prop:autocorrelation} shows
\[
 \autII_{t}(\x_{\alt}) = \frac{1}2 e^{-(4-2\theta)t} \leq \autII_t(\x_0)\quad\mbox{for every $\x_0$ and $t>0$}\,;\]
hence, by definition, $T_{\x_0} \geq T_{\x_{\alt}} \sim \frac1{4-2\theta}\log n$ holds for every $\x_0$. 
\end{remark}

Plugging the lower bound on $T_{\x_0}$ from \Cref{rem:T-xalt} in the asymptotics for $\tmix^{\x_0}$ given in \Cref{thm:main}  establishes \Cref{cor:bipartite}.
Combining the upper bound on $T_{\x_0}$ from \Cref{rem:T-1} with \Cref{thm:main} implies \cref{eq:tmix-asymp} in \cref{cor:uniform} for all $q\geq 3$ (the case $q=2$ was established in \cite{CPS16}). 

\subsection{Eigenfunctions of the noisy voter model}\label{sec:eigenfunctions}
As in \cref{prop:autocorrelation}, let  $\psi_1,\ldots,\psi_n$ be an $L^2(\pi_G)$-orthonormal basis of eigenfunctions of the ($1$-step) transition matrix $P$ of discrete-time simple random walk on $G$ with corresponding eigenvalues $\lambda_1,\ldots,\lambda_n$, and let $\gamma_l = 1-(1-\theta)\lambda_l$. For every $l=1,\ldots,n$ and $k=1,\ldots,q-1$, define a function $\Psi_l^{(k)}:\Cq^V \to \mathbb{C}$ by
\begin{equation}\label{eq:Psi-def}
\Psi_l^{(k)}(\x) = \langle \x^k, \psi_l\rangle_{\pi_G} \,.
\end{equation}
With this notation, \cref{eq:At2-rep} reads
\begin{equation}\label{eq:At2-rep-2}
\autII_t(\x_0) = \frac1q \sum_{l=1}^n \sum_{k=1}^{q-1}|\Psi_l^{(k)}(\x_0)|^2
 e^{-2\gamma_l t}\,.
\end{equation}
While the noisy voter model is, in general, a nonreversible Markov chain, the following result shows that $e^{-\lambda_l}$,  inherited from the spectrum of the random walk on $G$, appear as eigenvalues for it.
\begin{proposition}\label{prop:eigenfunc}
For each $l=1,\ldots,n$ and $k=1,\ldots,q-1$, the function $\Psi_l^{(k)}$ is an eigenfunction of the noisy voter model, with eigenvalue $1-\gamma_l/n$ for the discrete-time chain (each step updates a uniformly chosen vertex), or $e^{-\gamma_l}$ for the continuous-time model (each vertex is updated at rate $1$).
\end{proposition}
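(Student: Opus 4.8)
The plan is to show that the noisy voter model, viewed through its action on the observables $\Psi_l^{(k)}$, acts diagonally — just as the duality computation in \Cref{prop:autocorrelation} already hints. Concretely, I would compute $\E_{\x_0}[\Psi_l^{(k)}(X_{dt})]$ for an infinitesimal continuous-time step (or, equivalently, the one-step expectation for the discrete-time chain that updates a uniformly random vertex) and check that it equals $(1-\gamma_l/n)\Psi_l^{(k)}(\x_0)$ in the discrete-time case. Since $\Psi_l^{(k)}(\x) = \langle \x^k,\psi_l\rangle_{\pi_G} = \sum_v \pi_G(v)\,\x(v)^k\,\overline{\psi_l(v)}$ is linear in the vector $(\x(v)^k)_{v\in V}$, it suffices to understand how a single-vertex update transforms $\E[\x(v)^k]$.

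First I would analyze the effect of updating one fixed vertex $v$. Conditioning on which vertex is updated and on the type of update: with probability $\theta$ the new color at $v$ is uniform on $\Cq$, so $\E[\text{(new }\x(v))^k] = \frac1q\sum_{\omega\in\Cq}\omega^k = 0$ for every $k=1,\ldots,q-1$ (this is exactly the cancellation already used in the proof of \Cref{prop:autocorrelation}); with probability $1-\theta$ the new color is $\x(w)^{\phantom{k}}$ for a uniform neighbor $w\sim v$, so the new value of $\x(v)^k$ has expectation $\frac{1}{d_G(v)}\sum_{w\sim v}\x(w)^k = (P\x^k)(v)$, the discrete-time random-walk average. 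Hence, for the discrete-time chain, updating a uniformly chosen vertex sends the function $\x \mapsto \x(v)^k$ (for that same $v$) to $(1-\frac1n)\x(v)^k + \frac1n\big[(1-\theta)(P\x^k)(v) + \theta\cdot 0\big]$, while leaving $\x(w)^k$ unchanged for $w\ne v$ only in expectation over the choice of updated vertex — so averaging over $v$ the whole vector $(\x(v)^k)_v$ is mapped in expectation to $(1-\tfrac1n)\x^k + \tfrac1n(1-\theta)P\x^k$.

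Next I would pair this with $\psi_l$. Taking the $\pi_G$-inner product with $\psi_l$ and using that $P$ is self-adjoint on $L^2(\pi_G)$ with $P\psi_l=\lambda_l\psi_l$, the one-step expectation of $\Psi_l^{(k)}(X)=\langle \x^k,\psi_l\rangle_{\pi_G}$ becomes
\[
\Big\langle (1-\tfrac1n)\x_0^k + \tfrac1n(1-\theta)P\x_0^k,\ \psi_l\Big\rangle_{\pi_G}
= \Big(1-\tfrac1n + \tfrac1n(1-\theta)\lambda_l\Big)\Psi_l^{(k)}(\x_0)
= \Big(1-\tfrac{\gamma_l}{n}\Big)\Psi_l^{(k)}(\x_0),
\]
using $\gamma_l = 1-(1-\theta)\lambda_l$. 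This is the asserted discrete-time eigenvalue. The continuous-time statement follows immediately: the continuous-time model is obtained from this discrete-time chain by running it at rate $n$ (each of the $n$ vertices has a rate-$1$ Poisson clock), so the generator acts on $\Psi_l^{(k)}$ with eigenvalue $n\cdot\big((1-\gamma_l/n)-1\big) = -\gamma_l$, giving semigroup eigenvalue $e^{-\gamma_l t}$; equivalently one can pass to the limit through $(1-\gamma_l/n)^{nt}\to e^{-\gamma_l t}$, or rederive it directly from the infinitesimal generator by the same computation without the $\frac1n$ bookkeeping.

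The only mild subtlety — the step I would be most careful about — is the bookkeeping of "which vertex gets updated": one must average the single-vertex computation over the uniform choice of updated vertex and correctly account for the $(1-\frac1n)$ of not touching $\x(v)^k$, so that the map on the full vector $(\x(v)^k)_v$ is genuinely the affine operator $(1-\tfrac1n)I + \tfrac1n(1-\theta)P$ (the noisy-update term contributing $0$ in every coordinate $k\ge1$). Once that is set up, self-adjointness of $P$ on $L^2(\pi_G)$ does all the work and there is no obstacle; no reversibility of the noisy voter model itself is needed, which is the point of the proposition.
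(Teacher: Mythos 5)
Your proposal is correct and follows essentially the same route as the paper: compute $\E_\x[X_1^k]=(1-\tfrac1n)\x^k+\tfrac{1-\theta}{n}P\x^k$ using the vanishing of $\sum_{\omega\in\Cq}\omega^k$ for noisy updates, pair with $\psi_l$ via self-adjointness of $P$ on $L^2(\pi_G)$, and pass to continuous time through the semigroup $\exp[-(I-\mathscr{P})nt]$. The bookkeeping subtlety you flag is handled identically in the paper's proof.
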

\begin{proof}
Let $l\in\{1,\ldots,n\}$ and $k\in\{1,\ldots,q-1\}$, and first consider the discrete-time noisy voter model, letting $\mathscr{P}$ be the $1$-step transition matrix of this chain.
If $\x \in\C_q^V$ is any initial state, then
\begin{align*}
(\mathscr{P} \Psi_l^{(k)})(\x) 
&= \E_\x\big[\Psi_l^{(k)}(X_1)\big] =
 \big\langle \E_\x\big[ X_1^k\big],\psi_l\big\rangle_{\pi_G}
= \Big\langle \Big(1-\frac{1}{n}\Big)\x^k+\frac{1-\theta}{n}P\x^k, \psi_l\Big\rangle_{\pi_G}\,,
\end{align*} 
where the last equality used that, in each coordinate $v\in V$ of the vector $\E_\x[X_1^k]$, the following holds:
(a)~with probability $1-\frac1n$, a different vertex is selected for the update, whence
$X_1(v)=\x(v)$; (b)~with probability $\frac{\theta}n$, a noisy update occurs at $v$, contributing $\frac1q \sum_{\omega\in\Cq}\omega^k = 0$ to the expectation; (c)~with probability $\frac{1-\theta}n$, a vote-copying update occurs at $v$, contributing $(P \x^k)(v)$ to the expectation. Recalling that $P$ is real symmetric (thus Hermitian), we have that
\[ \big\langle P\x^k,\psi_l\big\rangle_{\pi_G} =
\big\langle \x^k,P\psi_l\big\rangle_{\pi_G}
= \lambda_l \big\langle \x^k,\psi_l\big\rangle_{\pi_G}
\,,
\]
and combining the last two displays now shows that
\[ 
(\mathscr{P} \Psi_l^{(k)})(\x) = 
\Big(1-\frac{1}{n} + \frac{(1-\theta)\lambda_l}n\Big)\langle \x^k, \psi_l\rangle_{\pi_G} = \Big(1-\frac{\gamma_l}n\Big) \Psi_l^{(k)}(\x)\,.
\]
For the continuous-time noisy voter model, where the updates occur at rate $1$ per vertex, the associated semigroup is $\mathscr{H}_t = \exp[-(I-\mathscr{P})nt]$, thus $\Psi_l^{(k)}$ is an eigenvector with eigenvalue 
\[ \exp\Big[-\Big(1-\big(1-\frac{\gamma_l}n\big)\Big)n t\Big] = \exp\left(-\gamma_l t\right)\,,\]
as claimed.
\end{proof}
Note that, while the eigenfunctions $\{\psi_l\}$ are $L^2(\pi_G)$-normalized, the eigenfunctions $\{\Psi_l^{(k)}\}$ are not normalized in $L^2(\mu_G)$ (their variance under $\mu_G$ is not $1$). The following gives an exact expression for the variance of $\Psi_l^{(k)}$ and shows that when the vertices in $G$ have bounded degrees (and more generally, if the ratio of maximal and minimal degree is $O(1)$), all these variances have order $\frac1n$.
\begin{proposition}\label{prop:variance}
For each $l=1,\ldots,n$ and $k=1,\ldots,q-1$, we have
\begin{equation}\label{eq:varianceequality}
\Var_{\mu_G}(\Psi_l^{(k)}) = \frac{1}{\gamma_l}\sum_{v\in V}\pi_G(v)^2| \psi_l(v)|^2 \big(1-(1-\theta)h_G(v)\big)\,,
\end{equation}
with $h_G(v) = \frac{1}{d_G(v)}\sum_{w\sim v}\P(v \leftrightsquigarrow w)$ and $\{v\leftrightsquigarrow w\}$ the event that two independent continuous-time random walks from $v$ and $w$, which move at rate $1-\theta$ and are killed at rate $\theta$, coalesce before dying. 
In particular,
\begin{equation}\label{eq:varianceinequality1}
\frac{\theta}{\gamma_l}\sum_{v\in V}
\pi_G(v)^2| \psi_l(v)|^2\leq \Var_{\mu_G}(\Psi_l^{(k)}) \leq \frac{1}{\gamma_l}\sum_{v\in V}\pi_G(v)^2| \psi_l(v)|^2
\end{equation}
Furthermore, if we let $M:= \max_v \d_G(v)/ \min_v \d_G(v) $, then we have
\begin{equation}\label{eq:varianceinequality2}
\frac{\theta}{2M}\leq \frac{\theta}{M\gamma_l}\leq  n \Var_{\mu_G}(\Psi_l^{(k)})  \leq \frac{M}{\gamma_l}\leq \frac{M}{\theta}\,.
\end{equation}
\end{proposition}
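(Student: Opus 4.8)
The idea is to evaluate $\Var_{\mu_G}(\Psi_l^{(k)})$ via the coupling‑from‑the‑past (CFTP) description of $\mu_G$ from \Cref{sec:couplingfromthepast}, rewriting it as a quadratic form in the pairwise coalescence probabilities and then computing that form by an ``integration by parts'' for the two‑walk process — which, crucially, is reversible even though the noisy voter model itself is not. First I would record that $\Psi_l^{(k)}$ has mean zero under $\mu_G$: the update rule is symmetric under global color permutations, so $\mu_G$ is too, hence the marginal of $Y\sim\mu_G$ at each vertex is uniform on $\Cq$ and $\E_{\mu_G}[Y(v)^k]=0$ for $1\le k\le q-1$, giving $\E_{\mu_G}[\Psi_l^{(k)}]=\langle\E_{\mu_G}[Y^k],\psi_l\rangle_{\pi_G}=0$ and $\Var_{\mu_G}(\Psi_l^{(k)})=\E_{\mu_G}\big[|\Psi_l^{(k)}(Y)|^2\big]$. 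Expanding the inner product and using that, conditionally on the coalescent partition produced by CFTP, distinct clusters receive independent uniform colors while coalesced vertices share one, one gets $\E_{\mu_G}[Y(v)^k\overline{Y(w)^k}]=c(v,w)$, where $c(v,w):=\P(v\leftrightsquigarrow w)$ and $c(v,v):=1$; therefore
\[
\Var_{\mu_G}(\Psi_l^{(k)})=\sum_{v,w\in V}\pi_G(v)\pi_G(w)\,\overline{\psi_l(v)}\,\psi_l(w)\,c(v,w),
\]
which is already visibly independent of $k$.

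Next I would pin down the equation solved by $c$ on $V\times V$. Writing $c(v,w)=\E_{v,w}\big[e^{-2\theta\tau}\big]$, where $\tau$ is the meeting time of two independent continuous‑time rate‑$(1-\theta)$ simple random walks and (conditionally on $\tau$) both walks avoid the independent rate‑$\theta$ killing up to $\tau$ with probability $e^{-2\theta\tau}$, Feynman–Kac (equivalently a first‑step analysis) gives $c\equiv 1$ on the diagonal $D=\{(v,v):v\in V\}$ and $(\mathcal L-2\theta)c=0$ off $D$, where $\mathcal L=(1-\theta)\big((P-I)\otimes I+I\otimes(P-I)\big)$ is the generator of the two independent walks. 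Hence $(\mathcal L-2\theta)c=\rho$ on all of $V\times V$ with $\rho$ supported on $D$, and using $c(w,v)=c(v,w)$ together with $h_G(v)=\tfrac1{d_G(v)}\sum_{w\sim v}c(v,w)$ a one‑line computation gives $\rho(v,v)=-2\big(1-(1-\theta)h_G(v)\big)$.

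The heart of the argument is to pair this identity against $\Phi:=\psi_l\otimes\overline{\psi_l}$ in $L^2(\pi_G\otimes\pi_G)$. Since $P$ is self‑adjoint on $L^2(\pi_G)$, the operator $\mathcal L$ is self‑adjoint on $L^2(\pi_G\otimes\pi_G)$; moreover, as $P$ is real the conjugate $\overline{\psi_l}$ is again an eigenfunction of $P$ with (real) eigenvalue $\lambda_l$, so a direct check gives $(\mathcal L-2\theta)\Phi=-2\gamma_l\,\Phi$. Moving $\mathcal L-2\theta$ from $c$ onto $\Phi$, using that $\rho$ lives on the diagonal and that $\langle c,\Phi\rangle_{\pi_G\otimes\pi_G}=\Var_{\mu_G}(\Psi_l^{(k)})$ by the display above, yields
\[
\sum_{v\in V}\pi_G(v)^2\,\rho(v,v)\,|\psi_l(v)|^2=-2\gamma_l\,\Var_{\mu_G}(\Psi_l^{(k)}),
\]
and substituting $\rho(v,v)=-2(1-(1-\theta)h_G(v))$ and dividing by $-2\gamma_l$ is exactly \cref{eq:varianceequality}. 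The inequalities are then bookkeeping: $h_G(v)\in[0,1]$ forces $1-(1-\theta)h_G(v)\in[\theta,1]$, giving \cref{eq:varianceinequality1}; next, $\sum_v\pi_G(v)|\psi_l(v)|^2=\|\psi_l\|_{L^2(\pi_G)}^2=1$ combined with $\tfrac1{nM}\le\pi_G(v)\le\tfrac Mn$ (from $n\min_v d_G(v)\le 2|E|\le n\max_v d_G(v)$) bounds $\sum_v\pi_G(v)^2|\psi_l(v)|^2$ between $\tfrac1{nM}$ and $\tfrac Mn$, and $\gamma_l=1-(1-\theta)\lambda_l\in[\theta,2-\theta]$ gives $\tfrac1{2-\theta}\le\tfrac1{\gamma_l}\le\tfrac1\theta$; assembling these yields \cref{eq:varianceinequality2}.

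I expect the main obstacle to be the careful accounting in the Feynman–Kac step: deriving the source term $\rho$ on the diagonal with the correct sign and the correct appearance of $h_G$, and coupling the continuous‑time killing to the meeting time of two walks without double‑counting. Everything else — the mean‑zero reduction, the self‑adjointness of $\mathcal L$ with respect to $\pi_G\otimes\pi_G$, and the eigen‑relation for $\psi_l\otimes\overline{\psi_l}$ — is routine; in particular the non‑reversibility of the noisy voter model never enters, since the operator $\mathcal L$ governing pairs of dual walks is reversible.
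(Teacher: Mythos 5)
Your argument is correct, and it reaches \cref{eq:varianceequality}, \cref{eq:varianceinequality1} and \cref{eq:varianceinequality2} by a genuinely different route from the paper. The paper's proof leverages \cref{prop:eigenfunc} directly: since $\Psi_l^{(k)}$ is an eigenfunction of the (nonreversible) noisy voter transition matrix $\mathscr{P}$ with eigenvalue $1-\gamma_l/n$, one gets the Dirichlet-form-type identity $\Var_{\mu_G}(\Psi_l^{(k)})=\frac{n}{2\gamma_l}\E_{\mu_G}\big[|\Psi_l^{(k)}(Y_1)-\Psi_l^{(k)}(Y_0)|^2\big]$, and the one-step increment is then computed by conditioning on which vertex is updated and whether the update is noisy or vote-copying, with the quantity $h_G(v)$ appearing through $\E_{\mu_G}[|Y_0(w)^k-Y_0(v)^k|^2]=2\P(v\nleftrightsquigarrow w)$. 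You instead expand $\Var_{\mu_G}(\Psi_l^{(k)})=\sum_{v,w}\pi_G(v)\pi_G(w)\overline{\psi_l(v)}\psi_l(w)\,c(v,w)$ via CFTP, show by Feynman--Kac that $c$ solves $(\mathcal L-2\theta)c=\rho$ with a source $\rho$ supported on the diagonal and $\rho(v,v)=-2(1-(1-\theta)h_G(v))$, and then pair against the eigenfunction $\psi_l\otimes\overline{\psi_l}$ using the self-adjointness of $\mathcal L$ on $L^2(\pi_G\otimes\pi_G)$. I checked the Feynman--Kac step via first-step analysis and the eigen-relation $(\mathcal L-2\theta)(\psi_l\otimes\overline{\psi_l})=-2\gamma_l(\psi_l\otimes\overline{\psi_l})$, and both are right. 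Your approach has the advantage of exposing the coalescence kernel $c(v,w)$ as the central object, so the independence of the variance from $k$ is manifest from the outset and the needed reversibility is that of the dual two-walk process rather than of the chain being analyzed; the paper's approach is shorter and recycles \cref{prop:eigenfunc}, which it has already established, and does not require the full Feynman--Kac boundary-value setup. One small remark: you call the Feynman--Kac step ``the main obstacle,'' but note the paper's proof bypasses it entirely while still producing $h_G(v)$ from the same CFTP observation, conditionally on an update at $v$; so that step, while correct in your write-up, is not forced on you.
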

\begin{proof}
Let $\mathscr{P}$ be the transition matrix of the discrete-time noisy voter model (where each step updates a uniformly chosen vertex), and recall from \cref{prop:eigenfunc} that  $\Psi_l^{(k)}$ is an eigenfunction of~$\mathscr{P}$ with eigenvalue $1-\frac{\gamma_l}{n}$. 
Let $Y_0$ be distributed according to the equilibrium measure~${\mu_G}$, and let~$Y_1$ be the configuration after one step of the discrete chain started from $Y_0$.
Letting 
\[\sigma^2 :=\Var_{{\mu_G}}(\Psi_l^{(k)}) = \E_{\mu_G}\big[|\Psi_l^{(k)}(Y_0)|^2\big]\]
(using here that $\E_{\mu_G}[\Psi_l^{(k)}(Y_0)]=\left\langle \E[Y_0^k],\psi_l\right\rangle_{\pi_G}=0$ as $Y_0(v)$ is uniform on $\Cq$ for each $v$), we have
\begin{align*}
\E_{\mu_G}\big[|\Psi_l^{(k)}(Y_1)-\Psi_l^{(k)}(Y_0)|^2\big] &= \E_{\mu_G}\big[|\Psi_l^{(k)}(Y_0)|^2\big] + \E_{\mu_G}\big[|\Psi_l^{(k)}(Y_1)|^2\big] - 2 \Re\Big( \E_{\mu_G}\Big[\Psi_l^{(k)}(Y_0)\overline{\Psi_l^{(k)}(Y_1)}\Big]\Big) \\
&= 2\sigma^2-2\Re\bigg(\E_{\mu_G}\Big[\Psi_l^{(k)}(Y_0)\,\E_{\mu_G}\Big[\overline{\Psi_l^{(k)}(Y_1)}\;\big|\; Y_0\Big]\Big]\bigg)\,.
\end{align*}
Since $\E_{\mu_G}[ \Psi_l^{(k)}(Y_1) \mid Y_0] = \E_{Y_0}[\Psi_l^{(k)}(Y_1)] = (1-\frac{\gamma_l}n)\Psi_l^{(k)}(Y_0) $ by \cref{prop:eigenfunc}, we deduce that
\begin{align*}
\E_{\mu_G}\big[|\Psi_l^{(k)}(Y_1)-\Psi_l^{(k)}(Y_0)|^2\big] &= 2\sigma^2-2(1-\frac{\gamma_l}{n})\sigma^2 = \frac{2\gamma_l}{n}\sigma^2\,,
\end{align*}
that is,
\begin{equation}
\Var_{\mu_G}(\Psi_l^{(k)}) = \frac{n}{2\gamma_l}\E_{\mu_G}\big[|\Psi_l^{(k)}(Y_1)-\Psi_l^{(k)}(Y_0)|^2\big]\,. \label{eq:varianceisincrement}
\end{equation}
To compute the latter, denote for brevity
\[ \Upsilon := \Psi_l^{(k)}(Y_1)-\Psi_l^{(k)}(Y_0) = \big\langle Y_1^k-Y_0^k,\psi_l\big\rangle\]
and consider the two events $\cE_v, \cE'_v$ describing an update at $v\in V$ in the first step: a noisy update per~$\cE_v$ and a vote-copying update per $\cE'_v$. Since $\Upsilon\one_{\cE_v} = \big(U^k - Y_0(v)\big) \pi_G(v)\psi_l(v)$, with $U$ independent and uniformly distributed on $\Cq$, we have $|\Upsilon|^2\one_{\cE_v} = |(U/Y_0(v))^k-1|^2 (\pi_G(v)\psi_l(v))^2$, thus
\[ \E_{\mu_G} |\Upsilon|^2 \one_{\cE_v} = \pi_G(v)^2| \psi_l(v)|^2 \frac1q \sum_{\omega\in\Cq}|\omega^k-1|^2 = 2 \pi_G(v)^2| \psi_l(v)|^2\,.\]
On the event $\cE'_v$, we have $\Upsilon\one_{\cE'_v} = (Y_0(J)^k-Y_0(v)^k)\pi_G(v)\psi_l(v)$, with $J$ independent and uniformly distributed on the neighbors of $v$ in $G$, thus
\begin{align*} \E_\mu |\Upsilon|^2\one_{\cE'_v} &= \pi_G(v)^2| \psi_l(v)|^2
\frac1{d_G(v)}\sum_{w\sim v} \E_{\mu_G}\big[|Y_0(w)^k-Y_0(v)^k|^2\big] \\
&=
2\pi_G(v)^2| \psi_l(v)|^2
\frac1{d_G(v)}\sum_{w\sim v} \P(v\nleftrightsquigarrow w)\,,
\end{align*}
since sampling $Y_0$ via CFTP (as described in \cref{sec:couplingfromthepast}) results either in $\{v\leftrightsquigarrow w\}$, on which $Y_0(v)=Y_0(w)$, or in $\{v\nleftrightsquigarrow w\}$, on which $Y_0(w),Y_0(v)$ are independent and uniformly distributed on~$\Cq$, contributing $2$ to the expectation (recall the case of $\cE_v$). Combining the last two displays,
\begin{align*}
\E_{\mu_G} |\Upsilon|^2 &= \frac2n \sum_{v} \pi_G(v)^2| \psi_l(v)|^2
\Big( \theta + (1-\theta)\frac1{d_G(v)}\sum_{w\sim v}\P(v\nleftrightsquigarrow w)\Big) 
\\ &= 
\frac2n \sum_{v} \pi_G(v)^2| \psi_l(v)|^2
\Big( 1 - (1-\theta)\frac1{d_G(v)}\sum_{w\sim v}\P(v\leftrightsquigarrow w)\Big)\,,
\end{align*}
which, via \cref{eq:varianceisincrement}, yields \cref{eq:varianceequality}. Since $0 \leq h_G(v) \leq 1$, we get \cref{eq:varianceinequality1}, and \cref{eq:varianceinequality2} follows from the facts $\frac1{M n}\leq \pi_G(v) \leq \frac{M}{n}$, $\theta \leq \gamma_l \leq 2-\theta\leq 2$ and $\sum_v \pi_G(v)|\psi_l(v)|^2=\|\psi_l\|_{L^2(\pi_G)}^2 = 1$.
\end{proof}
\begin{remark}
Consider the ``magnetization'' eigenfunction $\Psi$ associated to the eigenfunction $\psi=\bfone$ of the random walk transition matrix $P$, corresponding to $\lambda=1$ (and $\gamma=\theta$):
\[\Psi(\x)=\sum_v\x(v)\pi_G(v)\,.\]
As noted in the proof of \cref{prop:variance}, we have $\E_{\mu_G} \Psi(Y)=0$, and so
\[ \Var_{\mu_G}(\Psi(Y))= \E_{\mu_G}\big[|\Psi(Y)|^2\big] = \sum_{v,w}\pi_G(v)\pi_G(w) \E\big[ Y(v)\overline{Y(w)}\big] 
= \sum_{v,w}\pi_G(v)\pi_G(w) \P(v\leftrightsquigarrow w)
\,, \]
with the last identity using that  $Y(v),Y(w)$ are independent and uniform on~$\Cq$ when $\{v\nleftrightsquigarrow w\}$, so
$\E Y(v)\overline{Y(w)}\one_{\{v\nleftrightsquigarrow w\}}=0$, and
 elsewhere $Y(v)=Y(w)$, so that 
$\E Y(v)\overline{Y(w)}\one_{\{v\leftrightsquigarrow w\}}=\P(v \leftrightsquigarrow w)$.
By \cref{prop:variance}, we have $\Var_{\mu_G}(\Psi) \leq (1/\theta)\sum_v \pi_G(v)^2$, thus we can conclude that
\begin{equation}\label{eq:magnetizationvariance}
\sum_{v,w}\pi_G(v)\pi_G(w)\P(v \leftrightsquigarrow w) \leq \frac{1}{\theta}\sum_{v}\pi_G(v)^2\,.
\end{equation}
This will be used in \Cref{sec:lowerbounds} to bound the variance of certain statistics.
\end{remark}

\subsection{Classical autocorrelation}\label{sec:classical-autocor}
Fix an initial configuration $\x_0$. The point-wise autocorrelation of $X_t$ is routinely defined as
\[
\mathsf{A}(0,t) = \sum_v \pi_G(v)\Big(\one_{\{X_t(v)=\x_0(v)\}}-\frac1q\Big)\,,
\]
which in the case $q=2$, where $\Cq=\{\pm1\}$, can also be written as $\frac12 \sum_v \pi_G(v) \x_0(v) X_t(v)$. The function $\autI_t(\x_0)$ from \cref{def:autocorrelationL1} is nothing but $\E_{\x_0} [\mathsf{A}(0,t)]$, and we will now argue that
\begin{equation}\label{eq:autI-autII}\autII_t(\x_0) = \autI_{2t}(\x_0)\,.
\end{equation}
To see this, recall that $\one_{\{X_t(v)=x_0(v)\}} = \frac1q \sum_{k=0}^{q-1} X_t(v)^k\overline{\x_0(v)^k}$
(used in the proof of \cref{prop:autocorrelation}), 
via which we can rewrite $\mathsf{A}(0,t)$ as
\[ \mathsf{A}(0,t) = \frac1q \sum_{k=1}^{q-1} \langle X_t^k,\x_0^k\rangle_{\pi_G}\,;\]
this then becomes, using the representation $\x^k = \sum_l \Psi_l^{(k)}(\x)\psi_l$ for the orthonormal basis $\{\psi_l\}$, 
\[ \mathsf{A}(0,t) = \frac1q \sum_{k=1}^{q-1} \sum_{l=1}^n \Psi_l^{(k)}(X_t) \overline{\Psi_l^{(k)}(\x_0)}\,.\]
By \cref{prop:eigenfunc}, we have that 
\[\E_{\x_0}[\Psi_l^{(k)}(X_t)] = e^{-\gamma_l t} \Psi_l^{(k)}(\x_0)\,,\]
and it thus follows that
\[ 
\autI_t(\x_0) = \E_{\x_0} [\mathsf{A}(0,t)] = \frac1q \sum_{k=1}^{q-1} \sum_{l=1}^n |\Psi_l^{(k)}(\x_0)|^2 e^{-\gamma_l t} \,,
\]
establishing \cref{eq:autI-autII} in view of \cref{eq:At2-rep-2}.
(In particular, one can express all our results in terms of $\autI_t$ rather than $\autII_t$; we chose to use $\autII_t$ to emphasize the role of $L^2$-mixing in the analysis.)

\section{Applications for the lattice}\label{sec:applications}

\subsection{Lattice patterns}\label{sec:latticepatterns}
In this section, we let $G$ be the $d$-dimensional lattice $(\Z/n\Z)^d$. Recall from \cref{eq:Cq-def} that we are identifying the $q$ colors with the set $\Cq$ of $q$-th roots of unity. We can define a general class of configurations that are periodic in every lattice direction as follows: for a given vector $\v = (\v_1,\ldots,\v_d)$ with $\v_i\in\{0,\ldots,q-1\}$, let
\begin{equation}
\x_\v(j_1,\ldots,j_d) = 
e^{2\pi i (j_1\v_1+\ldots+j_d\v_d)/q}\,, \qquad j=(j_1,\ldots,j_d)\in\Z_n^d
\end{equation}
(NB.\ we assume that $n$ is a multiple of $q$, so that the above patterns do not break at the edges). For example, taking $\v=\bfone$, we see that $\x_\bfone$ (not to be confused with the monochromatic state $\bfone)$ is equivalent to the rainbow configuration from \Cref{eq:rainbow}:
\begin{equation}
\x_\rainbow(j_1,\ldots,j_d) = \x_\bfone(j_1,\ldots,j_d) = e^{2\pi i (j_1+\ldots+j_d)/q}\,, \qquad (j_1,\ldots,j_d)\in\Z_n^d\,.
\end{equation}
Observe that for every $\v$ and $k$, one has $\x_\v^k = \x_{k\v}$, and that  $\x_\v^k$ is an eigenfunction of the discrete-time simple random walk on $G$ (normalized in the sense that $\|x_\v^k\|_{L^2(\pi_G)}=1$) with eigenvalue 
\[
\lambda_{k\v} = \frac{1}{d}\sum_{i=1}^d\cos\Big(\frac{2\pi k \v_i}{q}\Big)\,.
\]
Hence, for configurations of the form $\x_\v$, \Cref{prop:autocorrelation} yields the exact expression for $\autII_t$:
\begin{equation}
    \label{eq:aut2-pattern}
\autII_t(\x_\v) = \frac1q\sum_{k=1}^{q-1}e^{-2(1-(1-\theta)\lambda_{k \v})t}\,.
\end{equation}
For a given $\v$, if we let \[\lambda_\v^*:=\max\{\lambda_{k\v}\,:\;k=1,\ldots,q-1\}\,,\] then by our definition  of $T_{\x_0}$ in \cref{def:Tx0} we have that
\[
T_{\x_\v} = \frac{d}{2(1-(1-\theta)\lambda_\v^*)}\log n+O(1)\,,
\]
and Theorem \ref{thm:main} will imply that
\begin{equation}\label{eq:tmix-lattice-pattern}
\tmix^{\x_\v} \sim \frac{d}{2\min\{1-(1-\theta)\lambda_\v^*,2\theta\}}\log n = \begin{cases}
\frac{d}{4\theta}\log n & 0 < \theta \leq \theta_{\v}:=1-\frac{1}{2-\lambda_\v^*}\,,\\
\frac{d}{2(1-(1-\theta)\lambda_\v^*)}\log n & \theta_\v \leq \theta \leq 1\,.
\end{cases}
\end{equation}
This shows that as long as $\lambda_\v^*<1$ (whence $\x_\v$ is faster than worst-case), the behavior of $\tmix^{\x_\v}$ as a function of $\theta$ always changes at  $\theta_\v = (1-\lambda_\v^*)/(2-\lambda_\v^*)\in(0,1)$ (a dynamical phase transition, marking the value of $\theta$ below which the correlation quantity $T_{\corr}$, rather than $T_{\x_\v}$, governs $\tmix^{\x_\v}$). 

\begin{example}[Rainbow initial condition on $\Z_n^d$]
For any $d\geq 1$ and $q\geq 2$, the rainbow initial condition $\x_\rainbow=\x_\bfone$ for the noisy voter model on $\Z_n^d$ has $\lambda_\bfone^*=\cos(\frac{2\pi}{q})$, so by \cref{eq:tmix-lattice-pattern},
\[
\tmix^{\x_\rainbow} \sim \frac{d}{2\min\big\{1-(1-\theta)\cos(\frac{2\pi}q),2\theta\big\}}\log n = \begin{cases}
\frac{d}{4\theta}\log n & 0 < \theta \leq \theta_\bfone\\
\frac{d}{2-2(1-\theta)\cos(\frac{2\pi}q)}\log n & \theta_\bfone \leq \theta \leq 1
\end{cases}\,,
\]
with $\theta_\bfone = 1-1/(2-\cos(\frac{2\pi}q))$.

In the special case $q=2$, $\x_\rainbow$ is the alternating condition $\x_\alt=\x_\bfone$ and has $\lambda_\bfone^*=-1$, the lowest possible value of any $\lambda_\v^*$; thus $\x_\bfone$ achieves the optimal $T_{\x_\bfone}\sim\frac{d}{4-2\theta}\log n$ for $q=2$ (as we have already established, more generally for any bipartite graph $G$, in \cref{rem:T-xalt}).
\end{example}
(NB.\ $\tmix^{\x_\rainbow}$ is asymptotically optimal in dimension $d=1$ over all choices of a (deterministic) initial state $\x_0$, as was stated in \Cref{thm:rainbow}; we will establish this in the next subsection.)

In higher dimensions that is no longer the case; e.g., as mentioned in the introduction, for $d=2$ and $q=5$ one can consider the ``knight'' initial condition $\x_\v$ for $\v=(1,2)$; there, one has $\lambda_\v^* = \frac12(\cos(\frac{2\pi}5)-\cos(\frac\pi5))=-\frac14$ (with an equal $\lambda_{k\v}$ for all $k=1,2,3,4$), whereas $\lambda_\bfone^* = \cos(\frac{2\pi}5)$. More generally, we have the following.

\begin{example}[Knight initial condition on $\Z_n^d$]  For every $(d,q)$ with $d\geq 2$ and $q\geq 5$, as well as for $q=4$ in dimensions $d\geq 3$, the knight initial condition $\x_{\knight} = \x_\v$ for $\v = (1,\ldots,1,2)$ has $\lambda^*_\v < \lambda^*_\bfone$. Thus, by \cref{eq:tmix-lattice-pattern}, $\tmix^{\x_0}$ from an initial state $\x_0=\x_\knight$ is asymptotically faster than from $\x_0=\x_\rainbow$. 

To see this, write $c_k = \cos(\frac{2\pi k}q)$, so that $\lambda_{k\bfone} = c_k$ and $\lambda^*_{\bfone} = c_1$. With this notation, 
\[\lambda_{k\v} = \tfrac{d-1}d c_k + \tfrac1d c_{2k}\,.
\]
Let $k' = 2k \!\pmod q$, and note that if $k'\neq 0$ and $c_{k'}\neq c_k$ then $\lambda_{k\v} < \max\{c_k,c_{k'}\} \leq \lambda^*_{\bfone}$. Indeed:
\begin{itemize}[leftmargin=0.2in, nosep]
    \item The case $k'=0$ occurs if and only if $k=q/2$ (for $q$ even), whence $c_k=-1$ and $c_{k'}=1$, so $\lambda_{k\v} = -1+\frac2d \leq 0$ (strictly if $d\geq 3$) and $\lambda^*_{\bfone} = c_1 =\cos(\frac{2\pi}q) \geq 0$ (strictly if $q> 4$); thus, $\lambda_{k\v}< c_1$.
    \item The case 
$c_{k'} = c_k = c_1$ can occur only if $2\pi/q = \frac23 \pi$, i.e., when $q=3$, precluded by assumption.
\end{itemize}
Note that, for $q=4$, one can readily write the explicit values of $\lambda_{k\v}$ for $\v=(1,\ldots,1,2)$ (as $c_k$ is simply $0$ if $k$ is odd and $(-1)^{k/2}$ if it is even), yielding $\lambda^*_\v = -1/d$ for $d\geq 3$ (and $\lambda^*_\v=0$ for $d=2$). 
\end{example}

\subsection{Optimal initial condition in 1D}\label{subsec:opt-1d}

In this section we establish  \Cref{thm:rainbow}, showing that the rainbow initial state $\x_\rainbow$ is optimal in dimension $d=1$ for all $q\geq 2$.

\begin{proof}[Proof of \cref{thm:rainbow}]    Once we prove that $\autII_t(\x_\rainbow) = O(\autII_t(\x_0))$ for every $\x_0$ as per
\cref{eq:rainbowlb}, the fact that $T_{\x_\rainbow} \leq T_{\x_0}+O(1)$ will readily follow from \cref{eq:A2submultiplicativity1} of \Cref{prop:autocorrelation}. 

From the discussion in \Cref{sec:latticepatterns} (specifically, \cref{eq:aut2-pattern}), we have
\[
\autII_t(\x_\rainbow) = \frac1q \sum_{k=1}^{q-1} e^{-2(1-(1-\theta)\cos(\frac{2\pi k}{q }))t} \leq  e^{-2(1-(1-\theta)\cos(\frac{2\pi}{q }))t}\,.
\]
Hence, to prove \cref{eq:rainbowlb} it remains to show that, for some absolute constant $c_0>0$,
\begin{equation}\label{eq:rainbowgoal0}
\autII_t(\x_0) \geq \frac{c_0}{ q^2}\,e^{-2(1-(1-\theta)\cos(\frac{2\pi}{q}))t} \,.
\end{equation}
Note that \Cref{prop:autocorrelation} already gives such a bound for $q=2$, as we have $\autII_t(\x_0) \geq \frac{q}{q-1} e^{-(4-2\theta)t}$. Hence, we can assume $q \geq 3$ through the remainder of this proof. 

We will identify the vertices of $G=\Z/n\Z$ with $0,\ldots,n-1$, and let
\[
\psi_l = \Big(\exp\big(\tfrac{2\pi i jl}{n}\big)\Big)_{j=0}^{n-1}\qquad(l=0,\ldots,n-1)
\]
(the $l$-th $n$-dimensional Fourier vector), which form an $L^2(\pi_G)$-orthonormal basis of eigenfunctions for simple random walk on $G$, with corresponding eigenvalues
\[
\lambda_l = \cos\big(\tfrac{2\pi l}{n}\big)\qquad(l=0,\ldots,n-1)\,.
\]
Next, let
\begin{equation}\label{def:probabilityAmplitude}
\nu^{(k)}= \left(\langle \x_0^k,\psi_l\rangle_{\pi_G}\right)_{l=0}^{n-1}\,,
\qquad(k=0,1,\dots,q-1)\,,
\end{equation}
i.e., $\nu^{(k)}$ is the discrete Fourier transform (DFT) of $\x_0^k$, up to a factor of $1/n$ from $\pi_G$ in $\langle\cdot,\cdot\rangle_{\pi_G}$. 
That is, if $\sF$ is the DFT matrix, i.e., the symmetric matrix whose columns are $\bar{\psi_l}$, then $\nu^{(k)} = \frac1n \sF \x_0^k$.
We stress that $\nu^{(k)}$ have are unit vectors in $L^2$ by Parseval's identity:
\[
\sum_{l=0}^{n-1} |\nu^{(k)}(l)|^2=\sum_{l=0}^{n-1} |\langle \x_0^k, \psi_l\rangle_{\pi_G}|^2 = \langle \x_0^k,\x_0^k\rangle_{\pi_G} = \sum_{j=0}^{n-1} 
|\x_0^k(j)|^2
\pi_G(j) = \sum_{j=0}^{n-1} \pi_G(j) = 1\,.
\]
Recalling \cref{eq:At2-rep-2}, which wrote $\autII_t$ in terms of $\Psi_l^{(k)}(\x) = \langle \x^k,\psi_l\rangle_{\pi_G}$ from \cref{eq:Psi-def}, we have
\begin{equation}
    \label{eq:A2-nu}
\autII_t(\x_0) = \frac1q\sum_{k=1}^{q-1} \sum_{l=0}^{n-1} | \nu^{(k)}(l)|^2 e^{-2(1-(1-\theta)\lambda_l)t}\,.
\end{equation}
Our goal will roughly be to show that the coefficients $| \nu^{(k)}(l)|^2$ for $l\leq (1+o(1))n/q$ have large enough mass $c$, so that by the monotonicity of $\lambda_l$ we would get $\autII_t(\x_0) \geq c \exp[-2(1-(1-\theta)\lambda_{n/q})t]$.

Now, let $\Theta_l$ denote the circular shift $(\Theta_l \x)(j) = \x(j-l)$, and define 
\[
\nu_l^{(k)} := \Theta_l \nu^{(k)}\,.
\]
We want to exploit the fact that the state $\x_\rainbow$ yields an optimal basis in the following sense:
\[
\Big\{\nu_l^{(k)}\Big\}_{\substack{k=0,\ldots,q-1 \\ l=0,\ldots,n/q-1}} = \Big\{\delta_{k\frac{n}{q}+l}\Big\}_{\substack{k=0,\ldots,q-1\\ l=0,\ldots,n/q-1}} \qquad \text{if $\x_0=\x_\rainbow$}
\]
(in the above and in what follows, $\delta_l$ is the indicator function $\one_{\{\cdot=l\}}$),
while taking the same shifts with other initial configurations necessarily yields correlated vectors in the sector $\{\delta_0,\ldots\delta_{\frac{n}{q}}\}$ (which, as we will soon see, correspond to the coefficients that we are trying to bound from below). 

To do so, let $\circledast$ denote circular convolution (i.e., $(\x \circledast y)(m)=\sum_{j=0}^{n-1}\x(j)\y(m-j)$ with the index of $\y$ taken modulo $n$), so that
 \[ \nu_l^{(k)} = \nu^{(k)} \circledast \delta_l\,,\]
and recall that, if $\widehat \x=\sF \x$ and $\widehat \y = \sF \y$ are the DFTs of $\x$ and $\y$, then $\langle \widehat \x,\widehat \y\rangle = n \langle\x,\y\rangle$, and
$\sF (\x\circledast \y) = \widehat\x \cdot \widehat\y$, the point-wise product of $\widehat\x$ and $\widehat\y$. In the right hand above, $\nu^{(k)} = \frac1n \sF \x_0^k$ and $\delta_l = \langle\psi_l,\psi_l\rangle_{\pi_G} = \frac1n \sF \psi_l$, so $\sF \bar{\nu^{(k)}} = \bar{\x_0^k}$ and $\sF \delta_l = \bar{\psi_l}$, implying that
$ \sF \bar{\nu_l^{(k)}} = \bar{\x_0^k} \cdot \bar{\psi_l}$. 
It then follows that
\begin{align}
\langle \nu_{l_1}^{(k_1)}, \nu_{l_2}^{(k_2)} \rangle &= \frac1n \overline{\langle \sF \bar{\nu^{(k_1)}_{l_1}} , \sF \bar{\nu^{(k_2)}_{l_2}} \rangle} = \frac1n \overline{\langle \sF(\bar{\nu^{(k_1)}} \circledast \delta_{l_1}), \sF(\bar{\nu^{(k_2)}} \circledast \delta_{l_2}) \rangle} = 
\frac1n \langle \x_0^{k_1} \cdot \psi_{l_1}, \x_0^{k_2} \cdot \psi_{l_2} \rangle \nonumber\\
&= \frac1n\langle \x_0^{k_1-k_2}, \psi_{l_2-l_1} \rangle = \nu^{(k_1-k_2)}(l_2-l_1)\,,
\label{eq:nu^k_l-corr}
\end{align}
where we are reading both $k_1-k_2$ and $l_1-l_2$ modulo $n$. This shows that the Fourier coefficients $\nu^{(k)}(l)$ describe the correlations between the vectors $\{\nu^{(k)}_l\}$. To leverage this, consider the collection
\begin{equation}\label{eq:Bq-def}
\mathcal{B}_{N} := \{ \nu_l^{(k)}\,:\; k=0,1,\ldots,q-1\,,\; l=0,\ldots,N-1\}\,,
\end{equation}
for $N \asymp n$ to be fixed later (in \cref{eq:N-def}. The correlations between $\u,\v\in\mathcal{B}_{N}$ satisfy the following:
\begin{align}
\sum_{\substack{\u,\v \in \mathcal{B}_{N}\\ \u \neq \v}} |\langle \u, \v \rangle|^2 
&=
\sum_{\substack{k_1,k_2\in\{0,\ldots, q-1\}\\ l_1,l_2\in\{0,\ldots, N-1\} \nonumber\\
(k_1,l_1)\neq(k_2,l_2)}} |\nu^{(k_1-k_2)}(l_2-l_1)|^2 
=
\sum_{\substack{k_1,k_2\in\{0,\ldots, q-1\}\\ l_1,l_2\in\{0,\ldots, N-1\} \\
k_1\neq k_2}} |\nu^{(k_1-k_2)}(l_2-l_1)|^2 
\\ 
&= q\sum_{k=1}^{q-1} \sum_{l=-(N-1)}^{N-1} (N-|l|)|\nu^{(k)}(l)|^2\,,    \label{eq:B-corr}
\end{align}
where the first equality is by \cref{eq:nu^k_l-corr}; the second one used that $\nu^{(0)}(l)=\delta_0$ (as $\x_0^0=\bfone=\psi_0$), so we only get a contribution from $k_1=k_2$ if $l_1=l_2$ , disallowed in the first summation; finally, the transition between the lines counted the multiplicity of each $k=k_1-k_2$ and $l=l_2-l_1$ as follows:
\begin{itemize}
    \item there are $q$ pairs $(k_1,k_2)$  realizing every $k=k_1-k_2\neq 0$ (for every $k_1$, take $k_2=k+k_1$);
    \item there are $N-|l|$ pairs $(l_1,l_2)$ realizing every $l=l_2-l_1\in\{-(N-1),\ldots,N-1\}$  (for $l\geq 0$, take $l_1=0,\ldots,N-1-l$ and $l_2=l_1+l$; for $l< 0$, take $l_2=0,\ldots,N-1-|l|$ and $l_1=l_2-l$). 
\end{itemize}

To get a lower bound on the above coefficients, we use the following simple lemma, which is a variant of, e.g., \cite[Lem.~9.1]{Alon09} and \cite[Lem.~2]{Tao13} (those lemmas give bounds on $\max_{i\neq j}|\langle \v_i,\v_j\rangle|$ for unit vectors $\v_i\in\R^n$ as opposed to the version below that considers $\sum_{i\neq j} \langle |\v_i,\v_j\rangle|^2$ for $\v_i\in \C^n$, but the same proof applies to our setting here, and we include it for completeness).
\begin{lemma}\label{lem:terry}
Let $\v_1,\dots,\v_{n+r}\in\C^{n}$ with $\|\v_i\|=1$ for all $i$. Then
$
\sum_{i \neq j} |\langle \v_i, \v_j \rangle|^2 \geq r
$.
\end{lemma}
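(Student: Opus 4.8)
The plan is a short linear-algebra argument via the Gram matrix together with a rank constraint. Write $M = \big(\langle \v_i,\v_j\rangle\big)_{1\le i,j\le n+r}$ for the Gram matrix of the given vectors. Since every $\v_i$ lies in the common $n$-dimensional space $\C^n$, we may factor $M = V^{*}V$, where $V$ is the $n\times(n+r)$ matrix with columns $\v_1,\dots,\v_{n+r}$; hence $M$ is Hermitian, positive semidefinite, and has rank at most $n$. Its diagonal entries are $\|\v_i\|^2=1$, so $\operatorname{tr}(M)=n+r$, while (using that $M_{ji}=\overline{M_{ij}}$)
\[
\operatorname{tr}(M^2)\;=\;\sum_{i,j} M_{ij}M_{ji}\;=\;\sum_{i,j}|M_{ij}|^2\;=\;(n+r)+\sum_{i\neq j}|\langle \v_i,\v_j\rangle|^2\,.
\]

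Next I would pass to the eigenvalues $\mu_1,\dots,\mu_{n+r}\ge 0$ of $M$, of which at most $n$ are nonzero by the rank bound. Then $\sum_k\mu_k=\operatorname{tr}(M)=n+r$ and $\sum_k\mu_k^2=\operatorname{tr}(M^2)$, and Cauchy--Schwarz applied to the (at most $n$) nonzero eigenvalues gives $(n+r)^2=\big(\sum_k\mu_k\big)^2\le n\sum_k\mu_k^2=n\,\operatorname{tr}(M^2)$. Combining this with the identity above yields
\[
\sum_{i\neq j}|\langle \v_i,\v_j\rangle|^2\;=\;\operatorname{tr}(M^2)-(n+r)\;\ge\;\frac{(n+r)^2}{n}-(n+r)\;=\;\frac{r(n+r)}{n}\;\ge\; r\,,
\]
which is the claim.

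There is essentially no real obstacle here; the only point to be careful about is to invoke the rank bound at the right moment — applying Cauchy--Schwarz over the at most $n$ nonzero eigenvalues rather than over all $n+r$ of them (the latter would only recover $\operatorname{tr}(M^2)\ge n+r$ and hence the vacuous $\sum_{i\neq j}|\langle\v_i,\v_j\rangle|^2\ge 0$). In particular the estimate makes quantitative the obvious fact that $n+r$ unit vectors in $\C^n$ cannot be pairwise orthogonal once $r\ge 1$.
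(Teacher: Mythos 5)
Your proof is correct, and it rests on the same key structural observation as the paper's: the Gram matrix $M=(\langle\v_i,\v_j\rangle)$ factors as $V^*V$ with $V$ of size $n\times(n+r)$, hence is Hermitian PSD with rank at most $n$. Where you diverge is in extracting the final bound. The paper passes to $A=M-I$ and observes that the rank bound forces $A$ to have eigenvalue $-1$ with multiplicity at least $r$, so $\|A\|_{\textsc f}^2=\sum_{i\ne j}|\langle\v_i,\v_j\rangle|^2\ge r$ directly. You instead apply Cauchy--Schwarz to the at most $n$ nonzero eigenvalues of $M$ itself, giving $(n+r)^2\le n\operatorname{tr}(M^2)$ and hence $\sum_{i\ne j}|\langle\v_i,\v_j\rangle|^2\ge\frac{r(n+r)}{n}$. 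Your route is marginally longer but yields the strictly stronger conclusion $\frac{r(n+r)}{n}=r+r^2/n$ rather than the paper's $r$ (this is a standard Welch-bound-style estimate); for the use in the paper ($r\asymp n$, so both give $\Omega(n)$), the improvement is immaterial, and the paper's shift-by-identity argument is the more economical way to get exactly what is needed.
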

\begin{proof}
Consider the Gram matrix $G = (\langle \v_i, \v_j \rangle)_{i,j=1}^{n+r}$. As it is Hermitian with rank at most $n$ ($G=B^* B$ for an $n\times (n+r)$ matrix $B$), the matrix $A=G-I$ has real eigenvalues $\{\lambda_i\}_{i=1}^{n+r}$ in which $\lambda=-1$ has multiplicity at least $(n+r)-n=r$. By the assumption that the $\v_i$'s are unit vectors, its diagonal is all-zero, whence its Frobenius norm 
 $\|A\|_{\textsc f} = \sqrt{\operatorname{tr}(A^* A)}$ has $\|A\|^2_{\textsc f} =  \sum_{i\neq j} |\langle\v_i,\v_j\rangle|^2 $. At the same time, $\|A\|^2_{\textsc f} =  \sum_{i}\lambda_i^2 \geq r$, as required.
\end{proof}
Choose now 
\begin{equation}\label{eq:N-def}
N=\frac{n}{q}+\frac{n}{15+(1-\theta)t}\,,
\end{equation}
noting that
\[ \frac{2\pi}q  \leq \frac{2\pi N}n \leq \frac{2\pi}q + \frac{2\pi}{15} \leq \frac{4\pi}5\]
since $q\geq 3$. 
Then $\mathcal{B}_{N}$ from \cref{eq:Bq-def} has $qN $ unit vectors in $\C^n$, and therefore, by \cref{lem:terry},
\[
\sum_{\substack{\u,\v \in \mathcal{B}_{N}\\ \u \neq \v}} |\langle \u, \v \rangle|^2 \geq qN - n = \frac{qn}{15+(1-\theta)t}\,,
\]
which, when combined with \cref{eq:B-corr}, gives
\begin{equation}\label{eq:fourier1}
\sum_{k=1}^{q-1} \sum_{l=-(N-1)}^{N-1} (N-|l|)|\nu^{(k)}(l)|^2 \geq \frac{n}{15+(1-\theta)t}\,,
\end{equation}
as well as, after plugging the trivial bound $N-|l| \leq N $ into the last display,
\begin{equation}\label{eq:fourier2}
\sum_{k=1}^{q-1} \sum_{l=-(N-1)}^{N-1} |\nu^{(k)}(l)|^2 \geq \frac1N \cdot\frac{n}{15+(1-\theta)t)} = \frac{q}{15+q+(1-\theta)t}\,.
\end{equation}
In order to use these two estimates, we first bound $\autII_t(\x_0)$ from \cref{eq:A2-nu} via
\begin{align*}
\autII_t(\x_0) &\geq \frac1q \sum_{k=1}^{q-1} \sum_{l=-(N-1)}^{N-1} | \nu^{(k)}(l)|^2 e^{-2(1-(1-\theta)\lambda_l)t}\,,
\end{align*}
and observe that, for each $-N<l<N$,
\begin{align*}
e^{-2(1-(1-\theta)\lambda_l)t} &=  e^{-2(1-(1-\theta)\cos(\frac{2\pi N}{n}))t}  \exp\bigg(2\Big(\cos\Big(\frac{2\pi |l|}{n}\Big) - \cos\Big(\frac{2\pi N}{n}\Big)\Big)(1-\theta)t\bigg)\\
&\geq  
e^{-2(1-(1-\theta)\cos(\frac{2\pi N}{n}))t}  \bigg(1+2\Big(\cos\Big(\frac{2\pi |l|}{n}\Big) - \cos\Big(\frac{2\pi N}{n}\Big)\Big)(1-\theta)t\bigg)\,.
\end{align*}
Writing $x=2\pi N/n \in [2\pi/q, 4\pi/5]$ and $y:=2\pi(N-|l|)/n \in[2\pi/n,x]$, in this range we have
\[ \frac{\pi}{4 q} y \leq \cos(x-y) - \cos(x) \leq y\,.\]
(The right inequality holds as  $\cos(\cdot)$ is $1$-Lipschitz; for the left one, when  $y\leq x/2$, the mean value theorem supports the lower bound $\delta_q y$ for $\delta_q=\min\{\sin \xi: \frac{\pi}q<\xi<\frac45\pi\}\geq \pi/(2q)$, and if $y\geq x/2$ then we have the lower bound $\cos(x/2)-\cos x \geq \delta_q x/2 \geq \delta_q y/2$ for the same $\delta_q$.)
It follows that
\begin{align*}
\autII_t(\x_0) &\geq \frac1q e^{-2(1-(1-\theta)\cos(\frac{2\pi N}{n}))t} \sum_{k=1}^{q-1} \sum_{l=-(N-1)}^{N-1} | \nu^{(k)}(l)|^2 \left(1+\frac{N-|l|}n\cdot\frac{\pi^2}{q}(1-\theta)t\right)\\
&\geq \frac1q e^{-2(1-(1-\theta)\cos(\frac{2\pi N}{n}))t} \left(\frac{q}{15+q+(1-\theta)t} + \frac{\pi^2}{q}\cdot \frac{(1-\theta)t}{15+(1-\theta)t}\right)\,,
\end{align*}
where between the lines we used \cref{eq:fourier1,eq:fourier2}.
Finally, as $\cos(
\cdot)$ is $1$-Lipschitz, 
\[
\cos\Big(\frac{2\pi N}{n}\Big) \geq \cos\Big(\frac{2\pi}{q}\Big) - \frac{2\pi}{15+(1-\theta)t}\,,
\]
and therefore
\[
\autII_t(\x_0) \geq \frac1q e^{-2(1-(1-\theta)\cos(\frac{2\pi}{q}))t} \max\left\{\frac{q}{15+q+(1-\theta)t} ,\,\frac{\pi^2}{q}\cdot\frac{(1-\theta)t}{15+(1-\theta)t}\right\} e^{-4\pi \frac{(1-\theta)t}{15+(1-\theta)t}}\,.
\]
If $(1-\theta)t \leq 1$ then the first term in the maximum is at least $1/7$, whereas if $(1-\theta)t \geq 1$ then the second term in the maximum is at least $\pi^2/(16q)$. The last exponent is at least $e^{-4\pi}$, and when combined, this establishes \cref{eq:rainbowgoal0} and thus concludes the proof.
\end{proof}

\section{Upper bounds}\label{sec:upperbound}
In this section, we prove the upper bound in \Cref{thm:main}, from which the upper bound in \cref{cor:uniform} will readily follow (see \Cref{remark:uniform}).

Fix $\theta\in (0,1]$ and $q \geq 2$. Let $G=(V,E)$ be a graph on $n$ vertices satisfying \cref{eq:expansion} for some fixed $\gAlpha,\gKap$, and let $\x_0$ be an initial configuration on $G$. (We do not require that $G$ to be connected for this part of the proof.)
Define
\[ d^{\x_0}_\tv(t) := \left\|\P_{\x_0}(X_{t} \in \cdot)- \mu_G\right\|_\tv\]
and 
\begin{align}
t_0 &:= \max\Big\{T_{\x_0}, \frac{1}{4\theta}\log n\Big\}\,,\label{eq:t0-def}\\
t_1 &:= t_0 + b_1 (\log n)^{1-\frac{\gAlpha}{2}}\quad\mbox{for}\quad b_1=3/\theta\,,\label{eq:t1-def}\\
t_2 &:= t_1 + b_2 (\log n)^{1-\frac{\gAlpha}{2}} \quad \mbox{for}\quad b_2=2/\theta\,.
\label{eq:t2-def}
\end{align}
The goal will be to show that
\begin{equation}\label{eq:ubgoal}
d^{\x_0}_\tv(t_2) \leq  C e^{-\frac12(\log n)^{1-\frac{\gAlpha}2}}\,,
\end{equation}
for some $C>0$ depending on $\theta,q,\gAlpha,\gKap$ (but not $\x_0$ and $G$). 

\begin{remark}\label{rem:n0-large-enough}
If one wanted to write an upper bound that would hold for every $n$ (as opposed to $n$ large enough), and at the same time keeps the constant prefactor $C>0$ reasonably small, the following could be done. Using the representation detailed in \cref{sec:alternativerepresentations}, one can show that $\tmix^{\x_0}(\epsilon)\leq \theta^{-1}(\log n + \log(1/\epsilon))$, since we can couple $X_t$ to the stationary distribution on the event that all the walks $(Z_s^v)_{v\in V}$ have died (occurring with probability at least $1-ne^{-\theta t}$).
\end{remark}

To prove \cref{eq:ubgoal}, we follow the recipe of the analogous upper bound proof in \cite{LubetzkySly21}, with some modifications due to the fact that we are working with a slightly more general model and on an arbitrary geometry, rather than on a lattice. The last part of the proof instead relies on the complete monotonicity of $\autII_t(\x_0)$, proven in \Cref{prop:autocorrelation}.

Throughout this section, we will be making ample use of the duality with coalescing random walks and the couplings discussed in \Cref{sec:duality,sec:couplingfromthepast}. Hence, for the fixed time $t_2$, we can look at the dynamics on $[0,t_2]$ backwards in time and analyze the walks $\{Z_s^v\}_{v \in V, s \in [0,t_2]}$. We refer to the locations of the walks at some time $s$ (or $t_2-s$ in the forward dynamics) as the \textit{histories} of the walks at time $t_2-s$. We also refer to the collective locations of the walks that have not been killed by time $s$ (or $t_2-s$  in the forward dynamics) as the \textit{surviving} histories at time $t_2-s$.

\subsection{Reducing the total variation distance at time \texorpdfstring{$t_2$}{t2} to a product measure at time~\texorpdfstring{$t_1$}{t1}}\label{subsec:tv-t2-product-t1}
In this subsection we will establish the following result that, by analyze the backward dynamics in the interval $[t_1,t_2]$ from \cref{eq:t1-def,eq:t2-def}, reduces $d_{\tv}^{\x_0}(t_2)$ to the total variation distance between the projection of $X_{t_1}$ on certain subsets $V_i$ and the product measure of $\mu_G\restriction_{V_i}$.

\begin{lemma}
    \label{lem:reduce-X-t1(V_i)-product-mu}
Call a collection of subsets of vertices $\{V_i\}_{i=1}^m$ (for $m=m(n)$) {\upshape\good} if, for all $i,j$,
\[|V_i| \leq \gKap \exp\big[(\log n)^{1-\frac{\gAlpha}{2}}\big]\,, \quad \dist_G(V_i,V_j) \geq (\log n)^{1+\beta}\,,\]
where $\beta:=\frac{1}{2}\frac{\gAlpha^3}{1-\gAlpha}$. 
If $X_t^{(i)}$ are i.i.d.\ copies of $X_t$, and $Y^{(i)}$ are i.i.d.\ copies of $\mu_G$, then
\[
d_\tv^{\x_0}(t_2) \leq \max_{\{V_i\}_{i=1}^m\text{ \upshape\good}}
\Big\|\prod_{i=1}^m\P_{\x_0}\big(X_{t_1}^{(i)}(V_i) \in \cdot\big) - \prod_{i=1}^m \P\big(Y^{(i)}(V_i)\in\cdot\big) \Big\|_\tv + O(n^{-9})\,.
\]
\end{lemma}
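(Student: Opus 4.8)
The plan is to look at the backward coalescing-random-walk dynamics on the time interval $[t_1,t_2]$ and show that, with overwhelming probability, the collection of \emph{surviving} histories at time $t_1$ decomposes into well-separated clusters of small diameter, so that the distance at time $t_2$ is governed by a product over such clusters. Concretely, I would first run the walks $\{Z_s^v\}_{v\in V}$ downward from $s=0$ (top, time $t_2$) to $s=t_2-t_1 = (b_1+b_2)(\log n)^{1-\gAlpha/2}$. Along the way most walks die: since each walk is killed at rate $\theta$, after time $\Delta:=(b_1+b_2)(\log n)^{1-\gAlpha/2}$ a given walk survives with probability $e^{-\theta\Delta}=n^{-(b_1+b_2)(\log n)^{-\gAlpha/2}}$, which is $o(1)$ but not small enough on its own; the point is rather to control \emph{clusters}. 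Define an equivalence on surviving walks at time $t_1$ by connecting two of them if their space-time trajectories on $[t_1,t_2]$ (in the backward picture, $s\in[0,\Delta]$) ever came within graph-distance $(\log n)^{1+\beta}$ of each other — coalescences being a special case. Let $\{V_i\}$ be the sets of \emph{originating vertices} of the walks in each such cluster (equivalently, the vertices $v$ whose $Z^v$ is in that cluster). By construction distinct $V_i$ are at distance $\geq (\log n)^{1+\beta}$ in $G$, i.e. the collection is \good{} for the separation requirement.

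The two things to check are: (a) each $V_i$ is small, specifically $|V_i|\le\gKap\exp[(\log n)^{1-\gAlpha/2}]$; and (b) conditioning on the cluster structure and on the surviving histories, the law of $X_{t_2}$ restricted to the different clusters is a product, each factor distributed as $\P_{\x_0}(X_{t_1}^{(i)}(V_i)\in\cdot)$ on the dynamics side, or as $\P(Y^{(i)}(V_i)\in\cdot)$ after the coupling-from-the-past comparison. For (a): a walk starting at $v$ that is alive at time $t_1$ has traveled at most some distance in $[t_1,t_2]$, and its cluster is formed by chaining together $\le(\log n)^{1+\beta}$-close walks; a union-bound / first-moment estimate using \cref{eq:expansion} on ball sizes bounds the expected number of vertices reachable from any fixed vertex by such a chain. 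The Poissonian displacement of a rate-$(1-\theta)$ walk over time $\Delta$ is $O(\Delta)=O((\log n)^{1-\gAlpha/2})$ with stretched-exponential tails, so the whole cluster lies in a ball of radius $O((\log n)^{1-\gAlpha/2}\cdot(\text{chain length}))$; the choice $\beta=\frac12\frac{\gAlpha^3}{1-\gAlpha}$ is engineered precisely so that $\gKap\exp[r^{1-\gAlpha}]$ with $r$ of that order is still $\le\gKap\exp[(\log n)^{1-\gAlpha/2}]$ and the number of clusters reachable is $n^{o(1)}$, making a union bound over the $O(n)$ starting vertices give the claimed $O(n^{-9})$ error. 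This is the step I expect to be the main obstacle: one must carefully quantify the cascade of ``near-misses'' (a walk can be close to a walk that is close to another, etc.) and show the resulting connected component is genuinely sub-polynomial in size with probability $1-O(n^{-10})$, then union-bound; getting the exponents to line up with the stated $\beta$ and with the $O(n^{-9})$ is the delicate bookkeeping. The separation condition $\dist_G(V_i,V_j)\ge(\log n)^{1+\beta}$ must be large enough that walks in distinct clusters on $[0,t_1]$ (the remaining time down to the initial condition) cannot interact either — but that is handled in the later subsections, so here I only need the decomposition at time $t_1$.

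For (b), conditionally on the event $\mathcal{G}$ that the above decomposition holds: on $[t_1,t_2]$ the surviving walks in distinct clusters never met, so by the duality of \cref{sec:duality} their evolutions (steps and killings) on that interval were driven by disjoint sets of Poisson clocks and are independent; moreover everything below time $t_1$ (which determines the final colors) only sees the \emph{surviving histories at time $t_1$}, one configuration of locations per cluster. Hence, conditionally, $X_{t_2}(V_i)$ depends only on cluster $i$'s below-$t_1$ evolution from its own surviving history, and these are independent across $i$ — but each is distributed exactly as the corresponding coordinates would be if we had simply run the chain from $\x_0$ for time $t_1$ and looked at $V_i$. (Here one uses that ``advancing the surviving histories from time $t_1$ to $t_2$'' in the backward picture is, by the strong Markov property of the coalescing-walk system, equivalent to starting a fresh independent copy; the killed walks in $[t_1,t_2]$ contribute i.i.d.\ uniform colors to their vertices, matching what $\mu_G$ would assign, and that excess can be absorbed.) Thus $\P_{\x_0}(X_{t_2}\in\cdot\mid\mathcal G)$ dominates, in total variation, a product $\bigotimes_i\P_{\x_0}(X_{t_1}^{(i)}(V_i)\in\cdot)$ — up to the vertices outside $\bigcup V_i$, whose walks died in $[t_1,t_2]$ and which therefore already agree with $\mu_G$ under the CFTP coupling of \cref{sec:couplingfromthepast}. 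A standard triangle-inequality/coupling argument (replace $\mu_G$ by the same product of its marginal restrictions, using that on $\mathcal G$ the clusters are at distance $\ge(\log n)^{1+\beta}$ so $\mu_G$ itself nearly factorizes over the $V_i$'s — or, more cleanly, just bound $d_\tv(\P_{\x_0}(X_{t_2}\in\cdot),\mu_G)$ by $d_\tv$ of the two products plus $\P(\mathcal G^c)=O(n^{-9})$, after establishing that $\mu_G$ restricted to $\bigcup V_i$ is within $O(n^{-9})$ of $\bigotimes_i\mu_G\restriction_{V_i}$ via the same coalescing-walk separation estimate) yields the displayed bound with the $\max$ over \good{} collections. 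I would present (b) as the routine part and devote the bulk of the write-up to the cluster-size/separation estimate in (a).
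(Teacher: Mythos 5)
Your high-level strategy --- expose the backward coalescing walks on $[t_1,t_2]$, partition the survivors into well-separated low-size clusters, then factorize --- is indeed the paper's; and the factorization step in your part (b), including the role of the support/projection inequality for the surviving history and the CFTP matching of the dead walks, is essentially right.  One slip there: you define $V_i$ as the set of \emph{originating} (time-$t_2$) vertices, whereas for the bound to involve $\P_{\x_0}(X_{t_1}^{(i)}(V_i)\in\cdot)$ the sets $V_i$ must be the positions of the \emph{surviving walks at time $t_1$} (with the separation imposed on those positions); this is precisely what the paper does in \cref{lem:E}.

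The genuine gap is in your part (a).  You correctly identify bounding the ``cascade of near-misses'' as the crux, but you only sketch a first-moment / union-bound argument and flag it as delicate --- and, as written, the bookkeeping would not go through.  The radius of a chain of length $L$ of walks pairwise within distance $(\log n)^{1+\beta}$ is of order $L\cdot(\log n)^{1+\beta}$, not $L\cdot(\log n)^{1-\gAlpha/2}$ as you write (since $1+\beta>1-\gAlpha/2$, the separation threshold dominates the displacement); and a bare first-moment bound on surviving walks in a $(\log n)^{1+\beta}$-ball only yields subcriticality of the implied branching process, not the per-vertex $1-O(n^{-10})$ tail bound that you must have before union-bounding over $n$ vertices.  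The paper sidesteps all of this with a surrounding-barrier argument (\cref{lem:E}): for each $v$, it considers a sequence of concentric annuli of thickness $2(\log n)^{1+\beta}$ at scales $k=1,\dots,\frac14(\log n)^{\gamma}$; on the high-probability event $F$ that no walk travels more than $\frac12(\log n)^{1+\beta}$, walks started from distinct annuli cannot coalesce, so the events ``all walks from annulus $A_v(k)$ are killed before reaching $t_1$'' are \emph{conditionally independent across $k$}, each failing with probability only $O\big(\gKap e^{-(\log n)^{1-\gAlpha/2}}\big)$.  Multiplying over the $\sim(\log n)^{\gamma}$ scales gives the needed $O(n^{-10})$, and the innermost vacant annulus isolates the cluster containing $v$ inside a ball of size at most $\gKap e^{(\log n)^{1-\gAlpha/2}}$.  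This independence-across-scales device is the idea missing from your proposal: it bounds the cluster size without ever bounding chains directly.  To push through your first-moment route instead, you would need an actual tail estimate on the cluster size (e.g.\ a subcritical branching comparison with the correct exponents), not just an expectation bound.
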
 
\begin{remark}\label{rem:E[Vi]-rather-than-max}
We will in fact show a stronger statement, where the maximum over $\{V_i\}$ is replaced by an expectation over random collections $\{V_i\}$ that are \good with an extra indicator that the surviving history at time $t_1$ is precisely $\bigcup_i V_i$ (see the event $E$ defined in \cref{lem:E} and \cref{eq:dtv-t2-random-Vi-E}).
\end{remark}
\begin{remark}\label{rem:Vi-balls}
We will reduce to \good collections $\{V_i\}$ where each $V_i$ is a subset of some ball $B_v(r)$ whose size is at most $\gKap \exp[(\log n)^{1-\frac\gAlpha2}]$. Since the proofs in later subsections will only rely on the sizes of the $V_i$'s and their pairwise distances, we did not include this in the criteria for being \good.
\end{remark}

We begin with an elementary observation that, in order for the random walk $(Z_s^v)$ to reach distance at least $r$ from its origin $v$, its first $r$ updates must be moving updates (having rate $1-\theta$) as opposed to a killing one (having rate $\theta$), the probability of which is at most $(1-\theta)^r\leq e^{-\theta r}$.
\begin{fact}\label{fact:obvious}
For every fixed $\kappa>0$ and $\beta>0$, the probability that that at least one of the $n$ random walks $(Z_s^v)_{v\in V}$ travels a distance of at least $\kappa(\log n)^{1+\beta}$ is at most $n \exp[-\theta \kappa (\log n)^{1+\beta}]$.
\end{fact}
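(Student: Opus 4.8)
The plan is to derive this from the single-walk geometric-tail estimate already flagged in the sentence preceding the statement, followed by a trivial union bound. First I would fix a vertex $v\in V$ and recall the construction of $(Z_s^v)$ from \cref{sec:duality}: it is a continuous-time walk on $G$ whose ``update'' events arrive at rate $1$, each being (independently of the others) a nearest-neighbor move with probability $1-\theta$ or a killing with probability $\theta$, after which the walk is frozen for good. Since a single move changes $\dist_G(v,\cdot)$ by at most $1$, for the walk ever to reach graph distance $\geq r := \kappa(\log n)^{1+\beta}$ from its origin it must perform at least $\lceil r\rceil$ move-updates before its (unique) killing update; the number of move-updates preceding the kill is a geometric random variable, so this event has probability $(1-\theta)^{\lceil r\rceil}\leq e^{-\theta r}$, using $1-x\leq e^{-x}$. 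Equivalently, one phrases this as $\P\big(\sup_{s\geq 0}\dist_G(v,Z_s^v)\geq r\big)\leq e^{-\theta r}$, which is the form convenient for controlling pairwise distances between \good collections in the later subsections.

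Next I would apply a union bound over the $n$ starting vertices. Crucially this requires no independence of the walks $\{Z^v\}_{v\in V}$: they coalesce upon meeting, as described in \cref{sec:duality}, but coalescence only makes the relevant large-range events coincide rather than become independent, so the union bound is unaffected. Hence the probability that at least one of the $n$ walks attains distance $\geq r$ from its origin is at most $n e^{-\theta r}= n\exp[-\theta\kappa(\log n)^{1+\beta}]$, as claimed. Note the estimate is automatically uniform in the horizon $t_2$, since it bounds the entire killed trajectory rather than its position at any fixed time, and this is exactly why it can be quoted in the proof of \cref{lem:reduce-X-t1(V_i)-product-mu} without tracking time dependence.

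Since the argument is genuinely elementary, there is no real obstacle; the only points worth a sentence are the interpretation of ``travels a distance of at least $r$'' (which I take to mean the range of the trajectory, not the displacement at time $t_2$) and the observation that the crude geometric tail already suffices — no refined (local) central limit estimate is needed, because the super-polynomial scale $r=\kappa(\log n)^{1+\beta}$ makes $n e^{-\theta r}$ super-polynomially small, which is all that the surrounding proof consumes.
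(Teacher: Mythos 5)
Your proof is correct and is essentially the same as the paper's, which consists of the one-sentence observation immediately preceding the fact (``in order for the random walk $(Z_s^v)$ to reach distance at least $r$ from its origin $v$, its first $r$ updates must be moving updates, the probability of which is at most $(1-\theta)^r\leq e^{-\theta r}$'') followed by an implicit union bound. You are marginally more careful in writing $(1-\theta)^{\lceil r\rceil}$ for non-integer $r$ and in flagging explicitly that the union bound needs no independence across the coalescing walks, but the argument is identical in substance.
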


The next lemma will show that the surviving histories at time $t_1$ are typically \good.

\begin{lemma}\label{lem:E}
Let $\{V_i\}$ be the sets obtained by starting with a singleton per  surviving walk at time~$t_1$, and repeatedly merging $V_i,V_j$ if $\dist_G(V_i,V_j)< (\log n)^{1+\beta}$. Let $E$ be the event that $\{V_i\}$ is {\upshape\good}. Then, we have
$\P(E) \geq 1-O(n^{-9})$.
\end{lemma}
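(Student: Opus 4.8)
\textbf{Proof plan for \cref{lem:E}.} The plan is to combine \cref{fact:obvious} with a counting/union bound over which collections of surviving walks can merge into a single $V_i$, and then a Chernoff-type bound on the number of surviving walks in any ball. First I would fix the geometry: the merging rule grows a cluster $V_i$ only by absorbing a walk whose history at time $t_1$ is within graph distance $(\log n)^{1+\beta}$ of the current cluster. So on the event (call it $F$) from \cref{fact:obvious} that no walk has traveled distance $\kappa(\log n)^{1+\beta}$ for a suitable constant $\kappa$ (say $\kappa = 10/\theta$, making $\P(F^c)\le n\exp[-10(\log n)^{1+\beta}] = O(n^{-9})$ easily), every surviving history at time $t_1$ lies within $\kappa(\log n)^{1+\beta}$ of its starting vertex. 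Consequently, a cluster $V_i$ containing a walk started at $v$ is entirely contained in a ball $B_v(R)$ with $R = C m_* (\log n)^{1+\beta}$, where $m_*$ is the number of walks merged into $V_i$; more importantly, the \emph{distance} criterion for \good is automatic by construction of the merging (distinct $V_i,V_j$ are at distance $\ge(\log n)^{1+\beta}$), so the only thing to control is the size bound $|V_i|\le\gKap\exp[(\log n)^{1-\gAlpha/2}]$.

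The heart of the argument is thus: on $F$, show that with probability $1-O(n^{-9})$, no cluster is too large. I would proceed by a first-moment bound. For a cluster of "combinatorial size" $k$ (meaning $k$ surviving walks get merged), on $F$ all $k$ starting vertices must lie in a ball of radius $\le 2k\kappa(\log n)^{1+\beta}$ around any one of them (since the merged cluster is connected through distance-$(\log n)^{1+\beta}$ links, each link being a walk that moved $\le\kappa(\log n)^{1+\beta}$). The number of such vertices is, by \cref{eq:expansion}, at most $\gKap\exp[(2k\kappa(\log n)^{1+\beta})^{1-\gAlpha}] = \gKap\exp[O(k^{1-\gAlpha}(\log n)^{(1+\beta)(1-\gAlpha)})]$. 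Meanwhile, the probability that a \emph{specific} walk survives to time $t_1$ is $e^{-\theta t_1}$; with $t_1 \ge t_0 \ge \frac{1}{4\theta}\log n$ (actually $t_1 = t_0 + b_1(\log n)^{1-\gAlpha/2}$, so $e^{-\theta t_1} \le n^{-1/4}$, and in fact much smaller because of the $b_1$ term), and the survival events of walks with disjoint histories being independent, the probability that $k$ \emph{particular} walks all survive is at most $e^{-\theta k t_1} \le n^{-k/4}$. Hence the expected number of $k$-clusters is at most
\[
n \cdot \big(\gKap\exp[O(k^{1-\gAlpha}(\log n)^{(1+\beta)(1-\gAlpha)})]\big)^{k} \cdot n^{-k/4}\,,
\]
where the first $n$ picks the "root" vertex, the middle factor over-counts the ways to choose the other $k-1$ vertices in the radius-$2k\kappa(\log n)^{1+\beta}$ ball, and the last factor is the survival cost. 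I would then check that the choice $\beta = \frac12\frac{\gAlpha^3}{1-\gAlpha}$ is exactly what makes $(1+\beta)(1-\gAlpha) < 1 - \frac{\gAlpha}{2}$ (a short computation: $(1+\beta)(1-\gAlpha) = (1-\gAlpha) + \beta(1-\gAlpha) = (1-\gAlpha) + \frac12\gAlpha^3 < 1 - \frac\gAlpha2$ provided $(1-\gAlpha) + \frac12\gAlpha^3 < 1-\frac\gAlpha2$, i.e. $\frac12\gAlpha^3 < \frac\gAlpha2$, i.e. $\gAlpha^2<1$, true), so that the middle factor $\exp[O(k \cdot k^{1-\gAlpha}(\log n)^{(1+\beta)(1-\gAlpha)})] = \exp[O(k^{2-\gAlpha}(\log n)^{1-\gAlpha/2 - \epsilon_0})]$ for some $\epsilon_0>0$, which is eventually dominated by the $n^{-k/4} = e^{-(k/4)\log n}$ survival cost as soon as $k^{2-\gAlpha}(\log n)^{1-\gAlpha/2-\epsilon_0} \ll k\log n$, i.e. for all $k$ up to roughly $(\log n)^{\text{something}}$, which comfortably covers the threshold $k$ at which $|V_i|$ could exceed $\gKap\exp[(\log n)^{1-\gAlpha/2}]$ — since such a $V_i$ would need $k$ at least of order $(\log n)^{\gAlpha/2}$ (because with $\le\gKap\exp[(\log n)^{1-\gAlpha/2}]$ vertices total in a \good set, and each merge adding a bounded-geometry ball, one forces $k$ polylogarithmically large). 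Summing the first-moment bound over all $k \ge k_0 := \gKap\exp[(\log n)^{1-\gAlpha/2}]/(\text{max ball size per merge})$ (or more simply, over all $k$ for which a $k$-cluster would violate the size bound) gives a geometric series bounded by $O(n^{-9})$, and a union with $\P(F^c) = O(n^{-9})$ finishes the proof.

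The main obstacle I anticipate is \emph{bookkeeping the two competing exponentials correctly}: the entropy term from counting vertex-subsets in a ball grows like $\exp[k^{2-\gAlpha}(\log n)^{(1+\beta)(1-\gAlpha)}]$ while the survival-probability term decays like $\exp[-\tfrac{k}{4}\log n]$, and one must verify that the specific value $\beta = \frac12\frac{\gAlpha^3}{1-\gAlpha}$ (and the specific slack $b_1(\log n)^{1-\gAlpha/2}$ added in $t_1$) guarantees the decay wins uniformly over the relevant range of $k$ — in particular down to the smallest $k$ that can produce a non-\good (oversized) cluster. A secondary subtlety is the independence structure: walks only evolve independently while their histories are disjoint, so to get the clean bound $e^{-\theta k t_1}$ on $k$ walks jointly surviving I would either restrict attention to walks whose histories are pairwise disjoint up to time $t_1$ (which on $F$ they typically are, since they start far apart — though merging precisely captures the failure of this) or, more robustly, use the coalescing structure: walks that have merged by time $t_1$ "share" a killing clock, which only \emph{helps} (fewer independent survival events needed), so $e^{-\theta t_1}$ per distinct coalescent branch is still an upper bound for the cluster surviving, and one can absorb the coalescence into the counting. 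I would present the disjoint-histories version first for transparency and remark that coalescence only improves the estimate. Everything else — the distance criterion, the reduction to balls per \cref{rem:Vi-balls}, the final union bound — is routine.
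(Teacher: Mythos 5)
Your approach (a first-moment count over large clusters of surviving walk positions) is genuinely different from the paper's proof, which instead shows that with probability $1-O(n^{-9})$ every vertex has, out of polylogarithmically many conditionally independent disjoint annuli around it, at least one annulus vacant of surviving walks; a cluster $V_i$ then cannot escape that annulus and so lies inside a ball of size at most $\gKap\exp[(\log n)^{1-\gAlpha/2}]$. As written, however, your proposal has two problems that matter.

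First, the time scale is wrong: a ``surviving walk at time $t_1$'' is one which, in the backward dynamics run from time $t_2$, has not been killed during $[t_1,t_2]$. Its survival probability is $e^{-\theta(t_2-t_1)} = e^{-\theta b_2(\log n)^{1-\gAlpha/2}} = e^{-2(\log n)^{1-\gAlpha/2}}$, not $e^{-\theta t_1}\le n^{-1/4}$; in particular $b_1$ and the absolute size of $t_1$ are irrelevant here, only $b_2 = 2/\theta$ enters. Second, and more seriously, your entropy estimate over-counts: enumerating $k-1$ extra surviving positions in a ball of radius roughly $k\kappa(\log n)^{1+\beta}$ gives $\exp\big[O\big(k^{2-\gAlpha}(\log n)^{(1+\beta)(1-\gAlpha)}\big)\big]$, which is \emph{super-linear} in $k$, while the corrected survival cost $\exp\big[-2k(\log n)^{1-\gAlpha/2}\big]$ is linear in $k$. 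Hence the first-moment bound closes only for $k$ up to order $(\log n)^{\gAlpha(1+\gAlpha)/2}$ and then diverges. A bad cluster, however, needs $|V_i|=k>\gKap\exp[(\log n)^{1-\gAlpha/2}]$ surviving positions (each initial singleton contributes exactly one element to $V_i$, so $|V_i|$ is the number of positions in the cluster, not the size of an enclosing ball; your claim that $k$ of order $(\log n)^{\gAlpha/2}$ already suffices conflates these two), which is far beyond the point where your bound stops working. One could salvage the counting by replacing the ball enumeration with a lattice-animal count (connected $k$-subsets of the auxiliary graph linking positions at distance below $(\log n)^{1+\beta}$), whose entropy $\exp\big[O\big(k(\log n)^{(1+\beta)(1-\gAlpha)}\big)\big]$ is linear in $k$ and uniformly dominated by the survival cost since $(1+\beta)(1-\gAlpha)<1-\gAlpha/2$; but that is a different argument from the one you wrote.
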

\begin{proof}
For any $v\in V$, let $F_v$ be the event that $Z_s^v$ (the random walk started at $v$ in the backward dynamics) does not leave $B_v( \frac{1}{2}\log^{1+\beta} n)$ for $s\in[0,b_2(\log n)^{1-\frac{\gAlpha}{2}}]$, and let $F = \bigcap_{v \in V} F_v$. 
By \Cref{fact:obvious},
\[
\P(F) = 1-O(n^{-10})\,.
\]

Next, we want to show that with high probability, there exists an annulus of logarithmic thickness around any $v\in V$ such that all walks started at the vertices in such annulus are killed by time $b_2(\log n)^{1-\frac{\gAlpha}{2}}$ (that is, before reaching $t_1$ backwards in time). For $\gamma:=\frac{\gAlpha}{2}+\frac{\gAlpha^2}{2}$, define the annuli
\[
A_v(k) = B_v(2k \cdot 2(\log n)^{1+\beta})\setminus B_v((2k-1)\cdot 2(\log n)^{1+\beta})\,, \quad k=1,\ldots, \tfrac{1}{4}(\log n)^{\gamma}\,.
\]
Note that for fixed $v$, the annuli $\{A_v(k)\}_{k \geq 1}$ are at graph distance at least $ 2(\log n)^{1+\beta}$ from each other, and that
\[
|A_v(k)| \leq |B_v(4k(\log n)^{1+\frac{\gAlpha}{2}})| \leq \gKap \exp\big(\big[2k\cdot2(\log n)^{1+\beta}\big]^{1-\gAlpha}\big) \leq \gKap e^{(\log n)^{(1+\beta+\gamma)(1-\gAlpha)}} = \gKap e^{(\log n)^{1-\frac{\gAlpha}{2}}}
\]
by \cref{eq:expansion} and using $k \leq \frac{1}{2}(\log n)^{\gamma}$ and $(1+\beta+\gamma)(1-\gAlpha)=1-\frac{\gAlpha}{2}$. For each $v$ and $k$, let $G_{v,k}$ denote the event that all the walks started at $A_v(k)$ are killed by time $b_2(\log n)^{1-\frac{\gAlpha}{2}}$. Then, since a single walk survives in such time with probability $\exp[-\theta b_2(\log n)^{1-\frac{\gAlpha}{2}}]$, a union bound yields
\[
\P(G^c_{v,k}) \leq |A_v(k)|e^{-\theta b_2(\log n)^{1-\frac{\gAlpha}{2}}} \leq \gKap e^{-(\log n)^{1-\frac{\gAlpha}{2}}}
\]
via the aforementioned bound on $|A_v(k)|$ and the choice $b_2 = 2/\theta$. It follows that
\[
\P(G_{v,k}^c \mid F) \leq \frac{\P(G^c_{v,k})}{\P(F)} = 2\gKap \exp\big[-(\log n)^{1-\frac{\gAlpha}{2}}\big]\,.
\]
for large enough $n$. Let $G_v = \bigcup_{k=1}^{\frac{1}{4}(\log n)^\gamma} G_{v,k}$ be the event that all the walks started from at least one of the annuli $A_v(k)$ are killed by time $b_2(\log n)^{1-\frac{\gAlpha}{2}}$. Then, noting the events $G_{v,k}$ are conditionally independent given $F$ (as the random walks from different annuli cannot coalesce under $F$), we get
\[
\P(G_v^c \mid F) = \prod_{k=1}^{\frac{1}{4}(\log n)^\gamma}\P(G^c_{v,k}\mid F) \leq \left(2 \gKap e^{-(\log n)^{1-\frac{\gAlpha}{2}}}\right)^{\frac{1}{4}(\log n)^\gamma} = e^{-(\frac14-o(1))(\log n)^{1+\gAlpha^2/2}}
= O(n^{-10})\,,
\]
where we used the fact that $1-\gAlpha/2 + \gamma = 1+\gAlpha^2/2$. 

Hence, if $G= \bigcap_{v \in V} G_v$ is the event that every vertex $v\in V$ is surrounded by some annulus $A_v(k)$ (with $1 \leq k \leq \frac{1}{4}(\log n)^\gamma$) whose random walks are killed by time $b_2(\log n)^{1-\frac\gAlpha2}$, a union bound gives
\[
\P(F^c \cup G^c) \leq \P(F^c) + \sum_{v\in V}\P(G_v^c \mid F) = O(n^{-9})\,.\]
On the event $F \cap G$, at time $t_1$ every surviving walk must be
\begin{enumerate}
    \item within $\frac{1}{2}(\log n)^{1+\beta}$ of its initial position, and 
    \item surrounded by an annulus $A$ of thickness $2(\log n)^{1+\beta}$, that is contained in a ball $B$ of size at most $ \gKap e^{(\log n)^{1-\frac{\gAlpha}{2}}}$, such that all random walks started from $A$ were killed by time~$t_1$.
\end{enumerate} This implies that, on $F\cap G$, every surviving vertex is within a ball of size at most $ \gKap e^{(\log n)^{1-\frac{\gAlpha}{2}}}$ and is surrounded by an annulus, vacant from walks that survived to time $t_1$, of thickness at least $(2-\frac{1}{2}-\frac{1}{2})(\log n)^{1+\beta} = (\log n)^{1+\beta}$. It follows that $F \cap G \subset E$, and so $\P(E) \geq 1-O(n^{-9})$.
\end{proof}

To reduce $\|\P_{\x_0}(X_{t_2} \in \cdot) - \mu_G\|_\tv
$
to $
\|\P_{\x_0}(X_{t_1}(\bigcup V_i) \in \cdot) - \mu_G\|_\tv
$ for the well-separated surviving components $\{V_i\}$ whose existence was addressed in \cref{lem:E}, we will use the \emph{update support} notion from \cite{LubetzkySly13,LubetzkySly14}. This notion takes a much simpler form in our setting of the noisy voter model compared to the Ising model. 
We include the proof of the following simple lemma---taken verbatim from \cite{LubetzkySly14}---both for completeness, and to justify a slightly generalized version of it (for a product of multiple chains), which we will later need and follows from the same argument.
\begin{lemma}[Special case of {\cite[Lem.~3.3]{LubetzkySly14}}] \label{lemma:supportinequality}
For any $t \geq \hat t \geq 0$ and any set $W$ of vertices at time~$t$ in the space-time slab, if $\hat W$ denotes the (random) history of $W$ at time $\hat t$, we have
\[
\left\|\P_{\x_0}(X_{t}(W)\in \cdot)-\mu_G\restriction_{W}\right\|_\tv \leq \E\left[\big\|\P_{\x_0}(X_{\hat t}(\hat W)\in \cdot)-\mu_G\restriction_{\hat W}\big\|_\tv\right]\,,
\]
where the expectation is with respect to the update sequences along $[\hat t,t]$. Furthermore, if $\{X_t^{(i)}\}$ are i.i.d.\ copies of the dynamics, and $\tau_i\in[0,t]$ are stopping times for their backward dynamics, then
\[
\left\|\prod \P_{\x_0}\big( X_{t}^{(i)}(W_i)\in \cdot\big)-\prod \mu^{(i)}_G\restriction_{W_i}\right\|_\tv \leq \E\left[\big\|\prod \P_{\x_0}(X_{t-\tau_i}(\hat W_i)\in \cdot)-\prod \mu_G\restriction_{\hat W_i}\big\|_\tv\right]\,.
\]
\end{lemma}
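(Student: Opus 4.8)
The plan is to prove the inequality by conditioning on the backward update data in the window $[\hat t,t]$ and exploiting two facts: once that data is fixed, the state at time $t$ restricted to $W$ is a \emph{deterministic} function of the state at time $\hat t$ restricted to the history $\hat W$; and this same functional relation is respected by the stationary measure. Concretely, let $U$ denote the collection of all updates occurring in $[\hat t,t]$ --- clock rings, the coin flips deciding noisy vs.\ vote-copying, the neighbor chosen in a vote-copying update, and the fresh uniform color produced by a noisy update. Reading the dynamics backward as in \Cref{sec:duality}, from each vertex of $W$ we follow the coalescing random walk from it over $[\hat t,t]$; given $U$ this walk is deterministic, so $\hat W=\hat W(U)$ is a deterministic function of $U$, and moreover there is a deterministic map $F_U$ with $X_t(W)=F_U\big(X_{\hat t}(\hat W)\big)$: a walk killed inside $[\hat t,t]$ contributes a fresh color recorded in $U$, while a surviving walk lands in $\hat W$ and copies the corresponding coordinate of $X_{\hat t}$. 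This is exactly the update-support property of \cite[Lem.~3.3]{LubetzkySly14} in the simplified form it takes for the noisy voter model.

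Next I would condition on $U$. Since the Poisson clocks and update coins on the disjoint intervals $[0,\hat t]$ and $[\hat t,t]$ are independent, $X_{\hat t}$ is independent of $U$; hence, with $U$ fixed (so that $\hat W=\hat W(U)$ is a fixed set), $X_{\hat t}(\hat W)$ still has law $\P_{\x_0}(X_{\hat t}(\hat W)\in\cdot)$, and therefore $X_t(W)$ has the pushforward law $F_U\#\P_{\x_0}(X_{\hat t}(\hat W)\in\cdot)$. Running instead from stationarity, $X_{\hat t}\sim\mu_G$ and $X_t\sim\mu_G$ by invariance, and the \emph{same} relation $X_t(W)=F_U(X_{\hat t}(\hat W))$ holds, so the law of $X_t(W)$ conditioned on $U$ is $F_U\#\big(\mu_G\restriction_{\hat W}\big)$. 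Because pushing a measure forward under a fixed measurable map is a contraction in total variation, for every $U$,
\[
\big\|F_U\#\P_{\x_0}(X_{\hat t}(\hat W)\in\cdot)-F_U\#(\mu_G\restriction_{\hat W})\big\|_\tv \;\leq\; \big\|\P_{\x_0}(X_{\hat t}(\hat W)\in\cdot)-\mu_G\restriction_{\hat W}\big\|_\tv\,.
\]
Averaging over $U$ and using convexity of $\nu\mapsto\|\nu-\mu_G\restriction_W\|_\tv$ --- valid since $\P_{\x_0}(X_t(W)\in\cdot)=\E_U[F_U\#\P_{\x_0}(X_{\hat t}(\hat W)\in\cdot)]$ and $\mu_G\restriction_W=\E_U[F_U\#(\mu_G\restriction_{\hat W})]$ --- yields the first displayed inequality of the lemma.

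For the product statement I would run $m$ independent copies and, for the $i$-th copy, condition on the backward update data $U_i$ of that copy in $[t-\tau_i,t]$; since $\tau_i$ is a stopping time for the $i$-th backward dynamics, the strong Markov (optional-stopping) property gives that, conditioned on $U_i$, the remaining history of copy $i$ is a fresh length-$(t-\tau_i)$ run, so exactly as above $X_t^{(i)}(W_i)=F_{U_i}\big(X_{t-\tau_i}^{(i)}(\hat W_i)\big)$ with $\hat W_i=\hat W_i(U_i)$. By independence of the copies, conditioned on $(U_1,\dots,U_m)$ the joint law of $\big(X_t^{(i)}(W_i)\big)_i$ is $\bigotimes_i F_{U_i}\#\P_{\x_0}(X_{t-\tau_i}(\hat W_i)\in\cdot)$ and the target is $\bigotimes_i F_{U_i}\#(\mu_G\restriction_{\hat W_i})$; the tensor map $\bigotimes_i F_{U_i}$ is again a deterministic total-variation contraction, and averaging over $(U_i)_i$ with convexity finishes the proof. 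The one point needing care is precisely this stopping-time version: one must check that conditioning on the update data up to the backward stopping time $\tau_i$ legitimately decouples the ``past'' of copy $i$ and leaves it distributed as a fresh run, i.e.\ the strong Markov property for the backward coalescing-walk representation. Everything else reduces to the elementary facts that deterministic pushforwards contract total variation and that total variation is convex in its first argument.
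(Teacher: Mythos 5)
Your proposal is correct and follows essentially the same approach as the paper's proof: both condition on the backward update data in $[\hat t,t]$ (the paper's $\cF_{\hat t,t}$, your $U$), observe that $X_t(W)$ becomes a deterministic function of $X_{\hat t}(\hat W)$ with $\hat W$ itself determined by the update data, apply that deterministic pushforwards contract total variation, and then average; your ``convexity of TV'' step is exactly the paper's interchange of $\max_\Gamma$ and $\int$. You also correctly single out the same technical point the paper does for the stopping-time version, namely that one must use the strong Markov property for the stopped $\sigma$-algebra of the backward dynamics to justify treating the remaining history as a fresh run (the paper additionally notes a small workaround for the event $\{\tau_i=s\}$ having probability zero, by running the backward dynamics in discrete steps).
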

\begin{proof}
Consider two instances of the dynamics, $X_t$ and $Y_t$, started at $\x_0$ and $\mu_G$ respectively.
As explained in \Cref{sec:couplingfromthepast}, we couple the backward dynamics of $X_t,Y_t$ using the same update sequence. Given the cumulative set of updates $\cF_{\hat t,t}$ in the interval $[\hat t,t]$, the configurations $X_{t}(W)$ and $Y_{t}(W)$ at time $t$ become deterministic functions $f_{\cF_{\hat t,t}}(X_{\hat t}(\hat W))$ and $f_{\cF_{\hat t,t}}(Y_{\hat t}(\hat W))$ (with the same $f_{\cF_{\hat t,t}}$) of the configurations at time $\hat t$ restricted to the surviving history $\hat W$ (as the killed histories are accounted for by $\cF_{\hat t,t}$). Hence, for any set of configurations $\Gamma$ and a realization of $\cF_{\hat t,t}$,
\begin{align*}
\P_{\x_0}(X_{t}(W) \in \Gamma\mid\cF_{\hat t,t})  &= \P_{\x_0}(f_{\cF_{\hat t,t}}(X_{\hat t}(\hat W)) \in \Gamma)\,,\\
\P_{\mu_G}(X_{t}(W) \in \Gamma\mid\cF_{\hat t,t}) &= \P_{\mu_G}(f_{\cF_{\hat t,t}}(Y_{\hat t}(\hat W)) \in \Gamma)\,.    
\end{align*}
Thus,
$\|\P_{\x_0}(X_{t}(W)\in \cdot)-
\P_{\mu_G}(Y_t(W)\in\cdot)\|_\tv =\max_{\Gamma}\big(\P_{\x_0}(X_{t}(W) \in \Gamma) - \P_{\mu_G}(Y_{t}(W) \in \Gamma)\big)$ becomes
\begin{align*}
 \max_{\Gamma} &\int\left(\P_{\x_0}(f_{\cF_{\hat t,{t}}}(X_{\hat t}(\hat W)) \in \Gamma) - \P_{\mu_G}(f_{\cF_{\hat t,{t}}}(Y_{\hat t}(\hat W)) \in \Gamma)\right)\,\d\P(\cF_{\hat t,{t}})\\
\leq &\int\max_{\Gamma} \left(\P_{\x_0}(f_{\cF_{\hat t,{t}}}(X_{\hat t}(\hat W)) \in \Gamma) - \P_{\mu_G}(f_{\cF_{\hat t,{t}}}(Y_{\hat t}(\hat W)) \in \Gamma)\right)\,\d\P(\cF_{\hat t,{t}})\\
\leq &\int\left\|\P_{\x_0}(X_{\hat t}(\hat W)\in \cdot)-\P_{\mu_G}(Y_{\hat t}(\hat W)\in \cdot)\right\|_\tv \,\d \P(\cF_{\hat t,{t}})\,,
\end{align*}
where in the last inequality we used that a projection can only reduce total variation distance.

For the generalization, note first that we can replace $\hat t$ by $t-\tau$ follows from exactly the same proof, as we may replace $\cF_{\hat t,t}$ everywhere with the stopped $\sigma$-algebra $\cF_{\tau}$ for the backward dynamics.
(Issues pertaining to the event $\{\tau=s\}$ having probability $0$ can easily be avoided by running the discrete backward dynamics while tracking the cumulative associated rescaling time in tandem.)
For a product chain $(X_t^{(i)})_{i=1}^m$ with individual stopping times $\tau_i$ for the backward dynamics $Z_s^{(i)}$, we expose the stopped chain $Z^{(i)}_{s\wedge \tau_i}$ in each coordinate, and the required result follows.
\end{proof}

\begin{proof}[Proof of \cref{lem:reduce-X-t1(V_i)-product-mu}]
Applying \cref{lemma:supportinequality} with $t'=t_1$, $t=t_2$ and $W=V$, we
write $W' = \bigcup V_i$ as prescribed in \cref{lem:E}, and further apply that lemma to find that
\begin{align}
\label{eq:dtv-t2-random-Vi-E}
d_\tv^{\x_0}(t_2) &\leq \E\bigg[\Big\|\P_{\x_0}\big(X_{t_1}\big(\bigcup V_i\big) \in \cdot\big) - \mu_G\restriction_{\bigcup V_i}\Big\|_\tv\one_E \bigg] + O(n^{-9})\,,
\end{align}
and in particular,
\begin{align}\label{eq:dtv-t2-max-proj}
d_\tv^{\x_0}(t_2) &\leq \max_{\{V_i\}\text{ \good}}\Big\|\P_{\x_0}\big(X_{t_1}\big(\bigcup V_i\big) \in \cdot\big) - \mu_G\restriction_{\bigcup V_i}\Big\|_\tv + O(n^{-9})\,.
\end{align}
Consider some realization of a \good $\{V_i\}_{i=1}^m$, and let us examine the  dynamics in the interval $[0,t_1]$.
Running the backward dynamics from time $t_1$, we couple the vector
$(X_{t_1}(V_i))_{i=1}^m$ with
$(X_{t_1}^{(i)}(V_i))_{i=1}^m$, where the $X_t^{(i)}$ are i.i.d., by letting the random walks and noisy updates generating the two vectors be identical as long as no walks from two different $V_i$'s meet. Should the walks from $V_i$ and $V_j$ meet, we let them coalesce normally under $(X_{t_1}(V_i),X_{t_1}(V_j))$, and instead let them stay independent under $(X_{t_1}^{(i)}(V_i),X_{t_1}^{(j)}(V_j))$. Consequently, the two vectors are identical if no two walks from different $V_i$'s meet, which happens for example if no walk travels more than $(\log n)^{1+\beta}$. Hence, by \Cref{fact:obvious},
\[
\Big\|\P_{\x_0}\big((X_{t_1}(V_1),\ldots,X_{t_1}(V_m))\in\cdot\big)-\P_{\x_0}\big((X_{t_1}^{(1)}(V_1),\ldots,X_{t_1}^{(m)}(V_m)) \in \cdot\big)\Big\|_\tv \leq O(n^{-10})\,.
\]
For the same reason (using $\P_{\mu_G}$ in lieu of $\P_{\x_0}$ for the $\{Y(V_i)\}$), we get that
\[
\left\|\mu_G\restriction_{\bigcup_{i=1}^m V_i} - (\mu_G\restriction_{V_1}\times\ldots\times\mu_G\restriction_{V_m})\right\|_\tv \leq O(n^{-10})\,.
\]
Combining the last two inequalities shows that for every \good $\{V_i\}_{i=1}^m$, 
\begin{align*}
\Big\|\P_{\x_0}\big(X_{t_1}\big(\bigcup V_i\big) \in \cdot\big) - \mu_G\restriction_{\bigcup V_i}\Big\|_\tv
\leq
\Big\|\prod_{i=1}^m\P_{\x_0}\big(X_{t_1}^{(i)}(V_i) \in \cdot\big) - \prod_{i=1}^m \P\big(Y^{(i)}(V_i)\in\cdot\big) \Big\|_\tv + O(n^{-10})\,.
\end{align*}
which together with \cref{eq:dtv-t2-max-proj} concludes the proof.
\end{proof}

\subsection{\texorpdfstring{$L^1$}{L1}-\texorpdfstring{$L^2$}{L2} reduction at time \texorpdfstring{$t_1$}{t1}}\label{subsec:l1tol2} 
Thus far we have reduced $d_{\tv}^{\x_0}(t_2)$ to the total variation distance between two product measures, namely $(X_{t_1}^{(i)}(V_i))_{i=1}^m$ and $(\mu_G\restriction_{V_i})_{i=1}^m$ for a \good collection $\{V_i\}_{i=1}^m$.
In this subsection, we bound the latter as follows.
\begin{lemma}\label{lem:Xt-from-V-to-U}
The following holds for every {\upshape\good} collection $\{V_i\}_{i=1}^m$. Let $\tau_i$ be the minimum $s\geq 0$ where the backward dynamics, started at time $t_1$ from every $v\in V_i$, has its surviving history reach size at most $3$ at time $t_1-s$, or $\tau_i=t_1$ if the backward dynamics reaches time $0$ before this happens. Further let $U_i$ be the surviving history at time $\tau_i$. Then there exists some $C=C(q,\theta)>0$ such that
\begin{align*}\Big\|\prod_{i=1}^m\P_{\x_0}\big(X_{t_1}^{(i)}(V_i) \in \cdot\big) - \prod_{i=1}^m \mu_G\restriction_{V_i} \Big\|_\tv^2
\leq C\, &\E\bigg[
\sum_{i=1}^m
\Big\| \P_{\x_0}\big( X_{t_1-\tau_i}^{(i)}(U_i)\in\cdot\big) - 
\mu_G\restriction_{U_i} \Big\|_\tv^2 \one_{\{\tau_i<t_1\}}
\bigg] \\
&+ O\big(e^{-8(\log n)^{1-\frac\gAlpha2}}\big)\,.
\end{align*} 
\end{lemma}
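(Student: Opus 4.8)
Here is the plan I would follow.

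The plan is to run the backward dynamics of the $m$ independent copies from time $t_1$ and apply the product form of \cref{lemma:supportinequality} with the stopping times $\tau_i$ (each bounded by $t_1$), which yields
\[
\Big\|\prod_{i=1}^m\P_{\x_0}\big(X^{(i)}_{t_1}(V_i)\in\cdot\big)-\prod_{i=1}^m\mu_G\restriction_{V_i}\Big\|_\tv
\;\le\; \E\Big[\Big\|\prod_{i=1}^m\P_{\x_0}\big(X^{(i)}_{t_1-\tau_i}(U_i)\in\cdot\big)-\prod_{i=1}^m\mu_G\restriction_{U_i}\Big\|_\tv\Big],
\]
where, conditionally on the backward data up to the $\tau_i$'s, the $U_i$ are fixed sets and the laws $\P_{\x_0}(X^{(i)}_{t_1-\tau_i}(U_i)\in\cdot)$ are mutually independent (each copy's forward evolution on $[0,t_1-\tau_i]$ is independent of the stopped backward information and of the other copies). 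By the definition of $\tau_i$ one has $|U_i|\le 3$ precisely on $\{\tau_i<t_1\}$, while on the complementary set $J:=\{i:\tau_i=t_1\}$ the set $U_i$ is the surviving history at time $0$ and may be large. I would therefore split off $J$: using $\|P\otimes Q-P'\otimes Q'\|_\tv\le\|P-P'\|_\tv+\|Q-Q'\|_\tv$ and bounding the $J$-factor crudely by $1$,
\[
\Big\|\prod_i\P_{\x_0}\big(X^{(i)}_{t_1-\tau_i}(U_i)\in\cdot\big)-\prod_i\mu_G\restriction_{U_i}\Big\|_\tv\ \le\ D+\one_{\{J\neq\emptyset\}},\qquad
D:=\Big\|\prod_{i\notin J}\P_{\x_0}\big(X^{(i)}_{t_1-\tau_i}(U_i)\in\cdot\big)-\prod_{i\notin J}\mu_G\restriction_{U_i}\Big\|_\tv .
\]

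Next I would estimate $\P(J\neq\emptyset)$. The event $\tau_i=t_1$ forces the surviving history from $V_i$ at time $t_1$ to contain at least four walks, hence four distinct $v_1,\dots,v_4\in V_i$ whose dual walks survive to time $t_1$ in four distinct coalescence clusters; coupling these with four walks run independently (they agree until the first collision, which does not occur here) bounds that probability by $e^{-4\theta t_1}$. A union bound over the at most $|V_i|^4$ quadruples, the bound $|V_i|\le\gKap\exp[(\log n)^{1-\gAlpha/2}]$ from $\{V_i\}$ being \good, the choice $t_1\ge\tfrac1{4\theta}\log n+\tfrac3\theta(\log n)^{1-\gAlpha/2}$ (so that $e^{-4\theta t_1}\le n^{-1}e^{-12(\log n)^{1-\gAlpha/2}}$), and a final union bound over the at most $n$ sets $V_i$ (pairwise at positive distance, hence disjoint) then give $\P(J\neq\emptyset)\le\gKap^4\,e^{-8(\log n)^{1-\gAlpha/2}}$.

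The remaining task is to bound $D^2$ pointwise. Write $\nu_i:=\P_{\x_0}(X^{(i)}_{t_1-\tau_i}(U_i)\in\cdot)$, $\pi_i:=\mu_G\restriction_{U_i}$, $d_i:=\|\nu_i-\pi_i\|_\tv$, and $\chi^2_i:=\chi^2(\nu_i\,\|\,\pi_i)$ for $i\notin J$. The $L^1$--$L^2$ tensorization (the upper-bound half of \cite[Prop.~7]{LubetzkySly14}) gives $4D^2\le\prod_{i\notin J}(1+\chi^2_i)-1$; and since $|U_i|\le 3$, sampling $\pi_i$ by coupling from the past and conditioning on the probability-$\ge\theta^{|U_i|}$ event that each of the $\le 3$ dual walks from $U_i$ is killed at its first clock ring shows $\min_x\pi_i(x)\ge\theta^3q^{-3}=:c_1$, whence $\chi^2_i\le c_1^{-1}\sum_x(\nu_i(x)-\pi_i(x))^2\le 4c_1^{-1}d_i^2$ (using $\sum_x a_x^2\le(\sum_x|a_x|)^2$). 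A dichotomy then closes it: if $\sum_{i\notin J}d_i^2\le c_1/4$ then $\prod_{i\notin J}(1+4c_1^{-1}d_i^2)-1\le e^{4c_1^{-1}\sum d_i^2}-1\le 8c_1^{-1}\sum_{i\notin J}d_i^2$, so $D^2\le 2c_1^{-1}\sum_{i\notin J}d_i^2$; and if $\sum_{i\notin J}d_i^2>c_1/4$ then $D^2\le 1\le 4c_1^{-1}\sum_{i\notin J}d_i^2$. In either case $D^2\le 4c_1^{-1}\sum_i d_i^2\one_{\{\tau_i<t_1\}}$. Taking expectations, squaring the first display and using $\E[D]^2\le\E[D^2]$ together with $\E[D]\le 1$, and absorbing $2\P(J\neq\emptyset)+\P(J\neq\emptyset)^2$ into $O(e^{-8(\log n)^{1-\gAlpha/2}})$, yields the claim with $C=4q^3/\theta^3$.

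I expect the main obstacle to be the control of the bad event $\{J\neq\emptyset\}$: there the sets $U_i$ carry no size bound, so neither the trivial estimate $\chi^2\le(\min_x\mu_G\restriction_{U_i})^{-1}$ nor a direct estimate of the distance from the point mass $\delta_{\x_0\restriction_{U_i}}$ to $\mu_G\restriction_{U_i}$ is of any use, and one genuinely needs the poly-logarithmic slack built into $t_1$ over $t_0$ to make the $\binom{|V_i|}{4}$-fold union bound beat $e^{-4\theta t_1}$; everything else — the support inequality, the $L^1$--$L^2$ step, and the lower bound on $\mu_G$ restricted to a set of size $\le 3$ — is routine.
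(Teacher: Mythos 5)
Your proposal is correct and follows the paper's strategy: apply the product form of the support inequality (\cref{lemma:supportinequality}) with the stopping times $\tau_i$, bound $\P(\exists i:\tau_i=t_1)$ by a $\binom{|V_i|}{4}$-fold union bound against $e^{-4\theta t_1}$ followed by a union bound over $i$, and then reduce the product-chain total variation distance to $\sum_i d_i^2$ via the $L^1$--$L^2$ reduction together with the lower bound $\min_x\mu_G\restriction_{U_i}(x)\ge(\theta/q)^{|U_i|}$ on sets of size $\le 3$. The one genuine deviation is your $L^1$--$L^2$ step: you invoke the multiplicative tensorization $4D^2\le\prod_{i\notin J}(1+\chi_i^2)-1$ (which, incidentally, is not what \cref{prop:l1-l2} says as recalled in the paper --- it is the $\chi^2$ tensorization combined with Cauchy--Schwarz) and must then close the argument with a dichotomy on $\sum d_i^2$ to handle the case where the product is large. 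The paper instead uses the additive form $D^2\le\sum_{i\notin J}\chi_i^2$ directly (\cref{rem:l1l2-upper}), which makes the dichotomy unnecessary; that form buys you simplicity at no cost here, since you never exploit the sharper multiplicative bound. The remaining small differences (using $\sum_x a_x^2\le(\sum_x|a_x|)^2$ versus $\|\cdot\|_\infty\|\cdot\|_1$ to bound $\chi_i^2$, yielding $4(q/\theta)^3$ instead of $2(q/\theta)^3$; and framing the bad event as $J\ne\emptyset$ rather than $\max_i\tau_i=t_1$) are immaterial.
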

\begin{proof}
We first examine the event $\{\tau_i< t_1\}$. The probability that $4$ fixed vertices $v_1,\ldots,v_4$ in $V_i$ have each of the $Z_s^{v_{j}}$ survive to time $0$ without ever coalescing is at most 
\[ e^{-4 \theta t_1}\leq \frac1n e^{-12(\log n)^{1-\frac\gAlpha2}}\,,\]
using \cref{eq:t1-def}. Since $|V_i|\leq \gKap \exp\big[(\log n)^{1-\frac\gAlpha2}\big]$, we thus have
\[ \P(\tau_i = t_1) \leq \binom{|V_i|}4 e^{-4\theta t_1} \leq \frac{\gKap^4}n e^{-8(\log n)^{1-\frac\gAlpha2}}\,,
\]
and can conclude from a union bound over the $m\leq n$ sets $V_i$ that
\[ \P\left(\max_{1\leq i\leq m}\tau_i<t_1\right) = 1-O\big(e^{-8(\log n)^{1-\frac\gAlpha2}}\big)\,.\]
We now apply \cref{lemma:supportinequality}, using the stopping time $\tau_i$ for $ X_t^{(i)}$, inferring that
\begin{align}
\Big\|\prod_{i=1}^m \P_{\x_0}&\big( X_{t_1}^{(i)}(V_i)\in\cdot\big) - 
\prod_{i=1}^m \mu_G\restriction_{V_i} \Big\|_\tv \leq \E\left[
\Big\|\prod_{i=1}^m \P_{\x_0}\big(X_{t_1-\tau_i}^{(i)}(U_i)\in\cdot\big) - 
\prod_{i=1}^m \mu_G\restriction_{U_i} \Big\|_\tv
\right] \nonumber\\
&\leq \E\left[
\Big\|\prod_{i=1}^m \P_{\x_0}\big(X_{t_1-\tau_i}^{(i)}(U_i)\in\cdot\big) - 
\prod_{i=1}^m \mu_G\restriction_{U_i} \Big\|_\tv \one_{\{\max_{i}\tau_i<t_1\}} 
\right] + O\big(e^{-8(\log n)^{1-\frac\gAlpha2}}\big)
\,.\label{eq:Xt-Vi-Ui}
\end{align}
We now wish to use the $L^1$-$L^2$ reduction for product chains (we only need one side of it here):
\begin{proposition}[{\cite[Prop.~7]{LubetzkySly14}}]\label{prop:l1-l2} Let $X_t=(X_t^{(1)},\ldots,X_t^{(m)})$ for mutually independent ergodic chains with stationary distributions $\pi_i$. Let $\pi=\prod_{i=1}^m \pi_i$ and  
$ \fM_t = \sum_{i=1}^m \|\P(X_t^{(i)}\in\cdot)-\pi_i\|_{L^2(\pi_i)}^2$.
Then $\|\P(X_t\in\cdot)-\pi\|_{\tv}\leq \sqrt{\fM_t}$ for any $t>0$; in particular, if  $\fM_t\to 0$ then $\|\P(X_t\in\cdot)-\pi\|_\tv \to 0$. Conversely,  if $\fM_t\to \infty$ then $\|\P(X_t\in\cdot)-\pi\|_\tv \to 1$.
\end{proposition}
\begin{remark}\label{rem:l1l2-upper}
The upper bound in \cref{prop:l1-l2} is straightforward and, as seen in that proof, holds more generally: if $\nu= \prod_{i=1}^m\nu_i$ and $\pi=\prod_{i=1}^m\pi_i$ then $\|\nu-\pi\|_\tv \leq (\sum_{i=1}^m \|\nu_i-\pi_i\|_{L^2(\pi_i)}^2)^{1/2}$.
\end{remark}
By \cref{prop:l1-l2} (we apply its upper bound as per \cref{rem:l1l2-upper}), we infer from \cref{eq:Xt-Vi-Ui} along with Jensen's inequality that
\begin{equation}
    \label{eq:Xt-Mt}
\Big\|\prod_{i=1}^m \P_{\x_0}\big( X_{t_1}^{(i)}(V_i)\in\cdot\big) - 
\prod_{i=1}^m\mu_G\restriction_{V_i}\Big\|_\tv^2 \leq \E\left[\fM \one_{\{\max_i\tau_i<t_1\}}\right] + O\big(e^{-8(\log n)^{1-\frac\gAlpha2}}\big)\,,
\end{equation}
for the random variable
$\fM = \sum_{i=1}^m \|\nu_i-\pi_i\|_{L^2(\pi_i)}^2$ with $\nu_i = \P_{\x_0}( X_{t_1-\tau_i}(U_i) \in \cdot)$  and $\pi_i = 
\mu_G \restriction_{U_i}$.
Observe that we deterministically have 
\begin{align*}
\|\nu_i - \pi_i\|_{L^2(\pi_i)}^2  &=\sum_{\x\in\Cq^{U_i}}\frac{|\nu_i(\x)-\pi_i(\x)|^2}{\pi_i(\x)} 
\leq \frac{\|\nu_i-\pi_i\|_\infty\|\nu_i-\pi_i\|_1}{\min_\x \pi_i(\x)}
 \leq 2\Big(\frac q\theta\Big)^{|U_i|}\|\nu_i-\pi_i\|_\tv^2\,,    
\end{align*}
using that $\min_\x \pi_i(\x)\geq (\theta/q)^{|U_i|}$, since any configuration $\x$ can be obtained by letting the first update in the backward dynamics of each vertex $v\in U_i$ be a noisy update (with probability $\theta$) that assigns the new value $\x(v)$ (with probability $1/q$). 
As $|U_i|\leq 3$ when  $\tau_i < t_1$, we get
\begin{align*}
\E\left[\fM \one_{\{\max_i\tau_i<t_1\}}\right] 
& \leq 2\frac{q^3}{\theta^{3}}\,\E\left[
\sum_{i=1}^m 
\Big\| \P_{\x_0}\big(X_{t_1-\tau_i}^{(i)}(U_i)\in\cdot\big) - 
\mu_G\restriction_{U_i} \Big\|_\tv^2
\one_{\{\tau_i<t_1\}}\right] 
 \,,
\end{align*}
which, together with \cref{eq:Xt-Mt}, completes the proof.
\end{proof}

\subsection{Reduction to the autocorrelation \texorpdfstring{$\autII_t$}{A2}} \label{subsec:reductiontoA2}
With \cref{lem:Xt-from-V-to-U} in mind, define
\begin{equation}
    \label{eq:dtv-U-t-def}
d_\tv^{\x_0}(U_i,t) := \Big\|\P_{\x_0}\big(X_{t_1-\tau_i}^{(i)}(U_i)\in\cdot\big) - 
\mu_G\restriction_{U_i} \Big\|_\tv^2\,.
\end{equation}
The final ingredient in the proof will be to establish the following.
\begin{lemma}\label{lem:dtv-U-bound}
The following holds for every {\upshape\good} collection $\{V_i\}_{i=1}^m$. Let $U_i$ and $\tau_i$ be as in \cref{lem:Xt-from-V-to-U}. Then, for some $C=C(q,\theta,\gKap,\gAlpha)>0$, the $d_\tv^{\x_0}(U_i,t)$ from \cref{eq:dtv-U-t-def} satisfy 
\[
\E\left[\sum_{i=1}^m d_\tv^{\x_0}(U_i,t_1-\tau_i)^2 \one_{\{\tau_i<t_1\}} \right] \leq C(1+t_1^3)e^{4(\log n)^{1-\frac{\gAlpha}{2}}}n (\autII_{t_1}(\x_0) \vee e^{-4\theta t_1}) + O(n^{-9})\,.
\]
\end{lemma}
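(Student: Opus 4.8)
The plan is to bound each term $d_\tv^{\x_0}(U_i,t)$ by an $L^2$-distance, expand that in the character basis of the abelian group $\Cq^{U_i}$, and read off the resulting Fourier coefficients from the coupled killed coalescing walks of \cref{sec:couplingfromthepast}. Write $\nu_i:=\P_{\x_0}(X_{t_1-\tau_i}(U_i)\in\cdot)$, $\pi_i:=\mu_G\restriction_{U_i}$, $T_i:=t_1-\tau_i$. On $\{\tau_i<t_1\}$ one has $|U_i|\le 3$ and $\min_{\x}\pi_i(\x)\ge(\theta/q)^{|U_i|}$ (as in the proof of \cref{lem:Xt-from-V-to-U}), so by Plancherel over $\Cq^{U_i}$, $d_\tv^{\x_0}(U_i,T_i)\le\tfrac12\|\nu_i-\pi_i\|_1\le C_q\big(\sum_{\emptyset\ne S\subseteq U_i}\sum_k|\Delta_S^k|^2\big)^{1/2}$, where $\Delta_S^k:=\E_{\x_0}\big[\prod_{v\in S}X_{T_i}(v)^{k_v}\big]-\E_{\mu_G}\big[\prod_{v\in S}Y(v)^{k_v}\big]$ runs over the nontrivial characters $k\in\{1,\dots,q-1\}^S$; as $d_\tv\le1$ it then suffices to bound $\E\big[\sum_i\one_{\{\tau_i<t_1\}}\sum_{\emptyset\ne S\subseteq U_i}\sum_k|\Delta_S^k|^2\big]$. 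To estimate a fixed $\Delta_S^k$ I would use the grand coupling of $X_{T_i}(U_i)$ and $Y(U_i)$ through a single family of walks from $U_i$ (stopped at slab-time $0$ for $X$, run to extinction for $Y$): a walk of $S$ killed inside the slab gives that coordinate an independent uniform color in both $X_{T_i}$ and $Y$, so the corresponding term of both expectations vanishes (since $k_v\ne0$), and hence $\Delta_S^k\ne0$ only if every walk of $S$ survives $[0,T_i]$. This yields three cases: \emph{(i)} $|S|=1$, where $|\Delta^k_{\{v\}}|^2=|\E_{\x_0}[X_{T_i}(v)^k]|^2=e^{-2\theta T_i}|(H_{(1-\theta)T_i}\x_0^k)(v)|^2$ with $H$ the rate-$1$ walk semigroup (as in the proof of \cref{prop:autocorrelation}); \emph{(ii)} $|S|\ge2$ with no two walks of $S$ coalescing inside the slab, where $|\Delta_S^k|\le 2e^{-|S|\theta T_i}\le 2e^{-2\theta T_i}$; and \emph{(iii)} $|S|\ge2$ with two walks of $S$ coalescing at backward-time $\sigma\le T_i$ at a vertex $m$ (both alive on $[0,\sigma]$), where the two sites then carry equal values so the contributions cancel if the two exponents sum to $0\bmod q$ and otherwise, with $k':=(k_v+k_w)\bmod q$, equal $e^{-2\theta\sigma}p^{vw}_\sigma(m)\,\E_{\x_0}[X_{T_i-\sigma}(m)^{k'}]$ — a genuine \emph{single-site} autocorrelation of the merged walk ($p^{vw}_\sigma$ the coalescence density, $\sum_m p^{vw}_\sigma(m)\le 2$).

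All three pieces should then collapse onto $\autII_{t_1}(\x_0)$. For \emph{(i)}, using the disjointness of $\{U_i\}$, $\min_v\pi_G(v)\ge 1/(Mn)$ with $M=O_{\gKap}(1)$, the identity $\sum_v\pi_G(v)\sum_k|\E_{\x_0}[X_s(v)^k]|^2=q\,\autII_s(\x_0)$, and the submultiplicativity $\autII_s(\x_0)\le e^{(4-2\theta)(t_1-s)}\autII_{t_1}(\x_0)$ of \cref{eq:A2submultiplicativity1}, I would bound the singletons by $C(q,\theta,\gKap)\,n\,(\tau^\star)^2 e^{(4-2\theta)\tau^\star}\autII_{t_1}(\x_0)$ on the event $E^\star:=\{\tau_i\le\tau^\star\ \forall i\}$, $\tau^\star=O_\theta((\log n)^{1-\frac{\gAlpha}{2}})$; the $(\tau^\star)^2$ enters because the per-vertex time $T_i$ varies with $i$, forcing a passage to $\sup_{s\in[t_1-\tau^\star,t_1]}$, which is controlled by the value at $t_1-\tau^\star$ plus $\int_{t_1-\tau^\star}^{t_1}\|\partial_s(H_{(1-\theta)s}\x_0^k)\|\,ds$ and costs $(\tau^\star)^2$ in $L^2(\pi_G)$. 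Case \emph{(ii)} contributes $\le C_q\,n\,e^{-4\theta t_1}e^{4\theta\tau^\star}$ directly. For \emph{(iii)}, squaring and Cauchy–Schwarz (first in $m$ against the weight $p^{vw}_\sigma$, then in $\sigma$) give $|\Delta^k_{\{v,w\},\mathrm{coal}}|^2\le C\,T_i\int_0^{T_i}e^{-4\theta\sigma}\sum_m p^{vw}_\sigma(m)\,|\E_{\x_0}[X_{T_i-\sigma}(m)^{k'}]|^2\,d\sigma$; summing over the $\le3$ pairs in each $U_i$ and over $i$, and using that $\sum_{\{v,w\}\subseteq U_i}p^{vw}_\sigma$ is supported in the balls $B_{U_i}(\sigma)$, which are disjoint since $\sigma\le t_1\ll(\log n)^{1+\beta}$, the nested $i$-sum of single-site autocorrelations collapses into one space sum $\le C_{\gKap}\,Mnq\,\autII_{t_1-\tau^\star-\sigma}(\x_0)$; finally $e^{-4\theta\sigma}$ against $\autII_{t_1-\tau^\star-\sigma}(\x_0)\le e^{(4-2\theta)(\tau^\star+\sigma)}\autII_{t_1}(\x_0)$ leaves $\int_0^{t_1}e^{-2\theta\sigma}\,d\sigma\le\frac1{2\theta}$, while the Cauchy–Schwarz/derivative losses total $T_i(\tau^\star)^2\le t_1^3$. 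Adding the three pieces on $E^\star$, and noting $\P((E^\star)^c)=O(n^{-9})$ (as $|V_i|\le\gKap e^{(\log n)^{1-\gAlpha/2}}$ and walks are killed at rate $\theta$, exactly as in \cref{lem:E}) while each summand is $O_q(1)$ on the complement, gives the claim after absorbing $\theta,M$ into $C$ and $e^{(4-2\theta)\tau^\star}$ into $e^{4(\log n)^{1-\gAlpha/2}}$.

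I expect case \emph{(iii)} — two walks coalescing \emph{inside} the slab while the merged cluster then survives to reach $\x_0$ — to be the main obstacle. Bounding its contribution by the bare survival probability $e^{-\theta T_i}$ is a full factor of $n^{1/2}$ too weak: it would produce a term $\gtrsim n\,e^{-2\theta t_1}$, which for a fast initial state such as $\x_\alt$ vastly exceeds $n(\autII_{t_1}(\x_0)\vee e^{-4\theta t_1})$. What makes it work is recognizing such an event as a single-site event for the merged walk, so that its weight is the autocorrelation at the residual time $T_i-\sigma$ rather than a mere probability; that the price $e^{-2\theta\sigma}$ for keeping both pre-coalescence walks alive exactly cancels the growth $e^{(4-2\theta)\sigma}$ of $\autII_{t_1-\tau_i-\sigma}(\x_0)$; and the locality of the $U_i$, which lets one re-sum over $i$ without re-introducing a factor $n$. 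Carrying all of this out uniformly over $|S|\in\{2,3\}$, over the $O(q^2)$ exponent choices (with the $k_v+k_w\equiv0$ subcase split off), and over the random stopping times $\tau_i$ is where the bulk of the technical work lies.
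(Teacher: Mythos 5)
Your plan of attack — expand $d_\tv^{\x_0}(U_i,\cdot)$ as an $L^2$ distance over $\Cq^{U_i}$, decompose it in the character basis, and track the coalescence structure of pairs/triples directly to identify the autocorrelation — is a legitimate alternative to the paper's route, which instead iterates \cref{lemma:supportinequality} one more level (from $U_i$ of size $\le3$ down to a single random vertex) before invoking \cref{prop:l1-l2} and the pointwise inequality $d^{\x_0}_\tv(v,s)^2 \leq \frac q4\sum_\omega(\P_{\x_0}(X_s(v)=\omega)-\frac1q)^2$ to fall onto $\autII$. Case~(iii) of your decomposition (coalescence inside the slab reducing to a \emph{single-site} autocorrelation of the merged walk) is exactly the right observation; the paper absorbs it automatically by further reducing to a single vertex $v_i$ via $\xi_i$.

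However, the way you dispose of the random time $\tau_i$ does not work, and this is the crux of the lemma. You truncate at a deterministic $\tau^\star=O_\theta((\log n)^{1-\gAlpha/2})$, bound the autocorrelation shift by the crude $\autII_{t_1-\tau_i}(\x_0)\leq e^{(4-2\theta)\tau^\star}\autII_{t_1}(\x_0)$ on $E^\star=\{\tau_i\le\tau^\star\ \forall i\}$, and claim $\P((E^\star)^c)=O(n^{-9})$. This is incompatible: $\P(\tau_i>\tau^\star)\leq\binom{|V_i|}{4}e^{-4\theta\tau^\star}\leq \gKap^4 e^{4(\log n)^{1-\gAlpha/2}}e^{-4\theta\tau^\star}$, so forcing the union-bounded error to be $O(n^{-9})$ requires $\tau^\star\gtrsim \frac1\theta\log n$, in which case $e^{(4-2\theta)\tau^\star}$ is a \emph{polynomial} in $n$ and cannot be absorbed into $e^{4(\log n)^{1-\gAlpha/2}}$. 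Conversely, keeping $\tau^\star=O((\log n)^{1-\gAlpha/2})$ leaves $\P((E^\star)^c)\lesssim n\,e^{-c(\log n)^{1-\gAlpha/2}}$, which is not even $o(1)$ (since $(\log n)^{1-\gAlpha/2}=o(\log n)$), let alone $O(n^{-9})$. A dyadic decomposition in $\tau^\star$ does not help either, because the two exponents push in opposite directions for all $\theta<1$. There is also a sign error in case~(iii): $e^{-4\theta\sigma}$ against $e^{(4-2\theta)\sigma}$ gives $e^{(4-6\theta)\sigma}$, which is \emph{increasing} in $\sigma$ whenever $\theta<\frac23$; your claim that this leaves $\int_0^{t_1}e^{-2\theta\sigma}\,d\sigma\le\frac1{2\theta}$ is an arithmetic slip and fails for small $\theta$.

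What the paper does instead — and what your approach is missing — is twofold. First, it does not truncate $\tau_i$ at all: it bounds the \emph{density} of $\tau_i$ by $\P(\tau_i\in(s,s+h))\leq 4\gKap^4 e^{4(\log n)^{1-\gAlpha/2}}e^{-4\theta s}h$ and integrates that density all the way to $t_1$. Second, and crucially, it never applies the crude submultiplicativity $\autII_{t-r}\leq e^{(4-2\theta)r}\autII_t$ inside the integral; instead it uses the complete monotonicity representation $\autII_{t-r}(\x_0)=\sum_l\alpha_l e^{-2\gamma_l(t-r)}$ (from \cref{prop:autocorrelation}) together with the elementary observation that the per-mode integrand $e^{-2\gamma_l(t-r)-4\theta r}$ is monotone in $r$ and hence $\leq e^{-(2\gamma_l\wedge 4\theta)t}$, which gives
$\int_0^t \autII_{t-r}(\x_0)\,e^{-4\theta r}\,dr \leq t\,\autII_t(\x_0)+\tfrac{q-1}q\,t\,e^{-4\theta t}$
uniformly in $\theta$. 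It is this mode-by-mode comparison of $2\gamma_l$ with $4\theta$ — rather than a uniform exponential stretch factor — that lets the density weight $e^{-4\theta s}$ tame the growth of $\autII$ without any polynomial loss. If you replace your truncation and uniform submultiplicativity by this integration-against-the-density argument, the rest of your Fourier computation can in principle be carried through, but as written the proposal has a genuine gap at precisely the point the lemma is designed to handle.
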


To obtain this bound, we will aim to reduce the quantity $\sum_{i=1}^m d_\tv^{\x_0}(U_i,t_1-\tau_i)^2 \one_{\{\tau_i<t_1\}}
$ to one involving single sites, rather than subsets $U_i$ of size at most $3$. 

\begin{remark}\label{rem:intuition-A2}
Roughly put, the main obstacle in this part of the proof is that, given the generality of $\x_0$ and $G$, the function $s\mapsto d_\tv^{\x_0}(U_i,t_1-s)$ can be highly irregular. Rather than treating the terms in this sum individually, our strategy will be to reduce their total to the function $\autII_t(\x_0)$, which, as we showed in \Cref{prop:autocorrelation}, is completely monotone. To do so, we will need to sum $d_\tv^{\x_0}(U_i,\cdot)$ at identical times, and so we need to account for the fact that the times $\{\tau_i\}$ have different distributions. We overcome this by first replacing the $U_i$ with deterministic sets, then bounding the densities of the $\tau_i$, and finally reducing the analysis to single sites, thus to $\autII_t(\x_0)$.
    
\end{remark}

While the sets $V_i$ were well-separated (at distance at least $ (\log n)^{1+\beta}$), the $U_i$ might not be. However, they are so with high probability; namely, if $V_i'$ is the $\frac{1}{4}(\log n)^{1+\beta}$-thickening of~$V_i$, i.e.,
\[
V_i':=\bigcup_{v \in V_i}B_v\left(\tfrac{1}{4}(\log n)^{1+\beta}\right)\,,
\]
then \Cref{fact:obvious} (used with $\kappa=\frac14$) shows that every $U_i$ is contained in $V_i'$ with probability $1-O(n^{-10})$.
Note that because $\dist_G(V_i,V_j) \geq (\log n)^{1+\beta}$, the $V_i'$ satisfy
\[
\dist_G(V_i',V_j') \geq \tfrac{1}{2}(\log n)^{1+\beta}\,.
\]
Then, if $U_i^{s} \subset V'_i$ denotes the subset of $V'_i$ that maximizes $d_\tv^{\x_0}(U_i,t_1-s)^2$ (note that $U_i$ is not determined by $\tau_i$, while $U_i^{\tau_i}$ is), we have
\begin{equation}\label{eq:dtv-Ui-bnd-1}
\E\left[\sum_{i=1}^m d_\tv^{\x_0}(U_i,t_1-\tau_i)^2 \one_{\{\tau_i<t_1\}} \right] \leq \sum_{i=1}^m\E\left[d_\tv^{\x_0}(U_i^{\tau_i},t_1-\tau_i)^2 \one_{\{\tau_i<t_1\}}\right] + O(n^{-10})\,.
\end{equation}
Next, we would like to integrate over the $\tau_i$ and reduce to common times for every $i$. 
The probability of $\tau_i \geq s$ is at most bounded by the probability that at least $4$ walks started at $V_i$ survive up to time $s$, which is at most $\binom{|V_i|}4 e^{-4\theta s}$.
Then, 
if we are to have $\tau_i\in (s,s+h)$, it must be that along that interval we remain with exactly $4$ surviving walks (a single coalescence or noisy update before $\tau_i$ occurs), which then receive at least one update. The number of updates from that point is at most a \text{Poisson}($4h$) variable, and hence the probability it is nonzero is at most its mean, $4 h$. Therefore,
\begin{align}\label{eq:Fi-upper-bnd}
\P\left(\tau_i \in (s,s+h)\right) 
&\leq 4 \gKap^{4}e^{4(\log n)^{1-\frac{\gAlpha}{2}}}e^{-4\theta s}h\,.
\end{align}
Hence, if $F_i$ denotes the distribution function of $\tau_i$, then 
\begin{align}\label{eq:integration}
\E\left[d_\tv^{\x_0}(U_i^{\tau_i},t_1-\tau_i)^2 \one_{\{\tau_i<t_1\}}\right] &\leq \int_0^{t_1}d_\tv^{\x_0}(U_i^s,t_1 - s)^2\d F_i(s) \nonumber\\
&\leq 4\gKap^{4}e^{4(\log n)^{1-\frac{\gAlpha}{2}}}\int_0^{t_1}d_\tv^{\x_0}(U_i^s,t_1 - s)^2e^{-4\theta s}\d s
\end{align}
(note that in the latter expression, $U_i^s$ is a deterministic set for each $s$ given $V_i'$).
Hence, 
\begin{align}\label{eq:lastbound}
\sum_{i=1}^m\E\left[d_\tv^{\x_0}(U_i^{\tau_i},t_1-\tau_i)^2 \one_{\{\tau_i<t_1\}}\right] \leq 4\gKap^4 e^{4(\log n)^{1-\frac{\gAlpha}{2}}}\int_0^{t_1}\left(\sum_{i=1}^m d_\tv^{\x_0}(U_i^s,t_1 - s)^2\right)e^{-4\theta s}\d s\,.
\end{align}
We now use the following:

\begin{lemma}\label{lemma:reductiontoA2}
Let $\{W_i\}_{i=1}^m$ be subsets with $|W_i|\leq 3$ and $\dist_G(W_i,W_j) \geq \frac{1}{2}(\log n)^{1+\beta}$, and let $\Delta$ denote the maximum degree in $G$. Then for every $t\geq 0$,
\[
\sum_{i=1}^m d_\tv^{\x_0}(W_i,t)^2 \leq 18q\Delta n \left( t^2\autII_t(\x_0) + (t+1)e^{-4 \theta t}\right) + O(n^{-9})\,.
\]
\end{lemma}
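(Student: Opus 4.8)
\emph{Plan.} Write $\delta_i:=\big\|\P_{\x_0}(X_t(W_i)\in\cdot)-\mu_G\restriction_{W_i}\big\|_\tv$; since $d_\tv^{\x_0}(W_i,t)$ denotes a \emph{squared} total variation distance (cf.\ \cref{eq:dtv-U-t-def}), the target is $\sum_i\delta_i^4\le 18q\Delta n\big(t^2\autII_t(\x_0)+(t+1)e^{-4\theta t}\big)+O(n^{-9})$. I would first bound $\sum_i\delta_i^2$ by a single-site autocorrelation term plus an $e^{-2\theta t}$-type correction coming from interactions \emph{inside} each $W_i$, and then upgrade to the fourth power using the crude estimate $\delta_i\le 3e^{-\theta t}$, which turns that correction into the $e^{-4\theta t}$ of the statement.

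\emph{Steps 1--2 ($L^2$ expansion, and the single-site term).} For a fixed $W=W_i$ with $|W|\le 3$, expand the densities of $\P_{\x_0}(X_t(W)\in\cdot)$ and $\mu_G\restriction_W$ relative to the uniform measure on $\Cq^W$ in the orthonormal character basis $\x\mapsto\prod_{v\in W}\x(v)^{k_v}$, $\vec k\in\{0,\dots,q-1\}^W$; since $\|f\|_{L^1}\le\|f\|_{L^2}$ against a probability measure,
\[
\delta_i^2\ \le\ \tfrac14\sum_{\vec k\neq 0}\big|D_i(\vec k)\big|^2,\qquad D_i(\vec k):=\E_{\mu_G}\Big[\textstyle\prod_{v\in W}Y(v)^{k_v}\Big]-\E_{\x_0}\Big[\textstyle\prod_{v\in W}X_t(v)^{k_v}\Big],
\]
and I would group the nonzero $\vec k$ by $s:=\lvert\operatorname{supp}\vec k\rvert\in\{1,2,3\}$. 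When $s=1$, say $\operatorname{supp}\vec k=\{v\}$ with $k_v=a\ge1$, one has $\E_{\mu_G}[Y(v)^a]=0$, so $D_i(\vec k)=-\E_{\x_0}[X_t(v)^a]$; as the $W_i$ are disjoint, $\sum_i\sum_{s=1}|D_i(\vec k)|^2\le\sum_{v\in V}\sum_{a=1}^{q-1}|\E_{\x_0}[X_t(v)^a]|^2$, which by the first identity of \cref{prop:autocorrelation} together with $\pi_G(v)=d_G(v)/(2|E|)\ge 1/(n\Delta)$ is at most $qn\Delta\,\autII_t(\x_0)$.

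\emph{Step 3 (pairs and triples).} For $s\in\{2,3\}$ I would expand $D_i(\vec k)$ through the coalescing-random-walk/CFTP coupling of \cref{sec:duality,sec:couplingfromthepast}: run the backward walks from $W_i$ and couple $X_t$ (stopped at time $0$) with $Y\sim\mu_G$ (continued), conditioned on the partition of $W_i$ into its coalescence blocks within backward-time $[0,t]$. Three reductions cut this down: (i) a block whose walk is killed before time $t$ gets the same uniform colour in $X_t$ and in $Y$, so $D_i(\vec k)$ is supported on the event that some walk of $W_i$ reaches time $t$; (ii) whenever the total $\vec k$-exponent over a surviving block is $\not\equiv 0\pmod q$, the uniform colour of that block in $Y$ averages to $0$ (roots of unity), cancelling the leading part of the matching term in $X_t$; (iii) what remains is controlled by survival probabilities --- $\le e^{-j\theta t}$ for $j$ of the (at most three) walks to survive to time $t$, and the elementary $\P(u\overset{>t}{\leftrightsquigarrow}v)\le e^{-2\theta t}$ for a late coalescence of $u,v\in W_i$ --- times roots-of-unity indicators. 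Since two vertices of one $W_i$ may be adjacent, one \emph{cannot} approximate their walks by independent ones here, so these terms survive only as an $e^{-2\theta t}$ (not a super-polynomial) weight; on the other hand, by $\dist_G(W_i,W_j)\ge\tfrac12(\log n)^{1+\beta}$, walks started in two different $W_i$'s coalesce with probability $\le(1-\theta)^{\frac14(\log n)^{1+\beta}}$, which stays $O(n^{-9})$ after summing over the $O(q^3 n)$ relevant pairs $(\vec k,i)$. This yields $\sum_i\sum_{s\in\{2,3\}}|D_i(\vec k)|^2\le C_q\, n\,e^{-2\theta t}+O(n^{-9})$ with $C_q$ polynomial in $q$.

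\emph{Step 4, and the main difficulty.} Combining the above, $\sum_i\delta_i^2\le\tfrac14\big(qn\Delta\,\autII_t(\x_0)+C_q ne^{-2\theta t}\big)+O(n^{-9})$. Each walk from $W_i$ survives to time $t$ with probability $e^{-\theta t}$ and, off the event that one does, $X_t$ and $Y$ are equal on $W_i$; hence $\delta_i\le|W_i|e^{-\theta t}\le 3e^{-\theta t}$, so $\delta_i^4\le 9e^{-2\theta t}\delta_i^2$ and
\[
\sum_i\delta_i^4\ \le\ 9e^{-2\theta t}\Big(\tfrac14 qn\Delta\,\autII_t(\x_0)+\tfrac14 C_q ne^{-2\theta t}\Big)+O(n^{-9})\ \le\ \tfrac94 qn\Delta\,\autII_t(\x_0)+\tfrac94 C_q ne^{-4\theta t}+O(n^{-9}),
\]
using $e^{-2\theta t}\le 1$. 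For $t\ge 1$ this is $\le 18q\Delta n\,\big(t^2\autII_t(\x_0)+(t+1)e^{-4\theta t}\big)+O(n^{-9})$ after tracking the constants in Step~3 (using $\autII_t\le t^2\autII_t$, $e^{-4\theta t}\le(t+1)e^{-4\theta t}$, $\Delta\ge 2$, $q\ge 2$); for $t<1$ one uses instead $\sum_i\delta_i^4\le\sum_i\delta_i^2\le m\le n\le 18q\Delta n\,(t+1)e^{-4\theta t}$ directly. The hard part is Step~3: unlike the well-separated families handled earlier, the vertices of a single $W_i$ can be adjacent, so the two- and three-point discrepancies genuinely carry an $e^{-2\theta t}$ --- rather than a negligible --- weight, and it is precisely the fourth power in the statement (present because $d_\tv^{\x_0}(W_i,t)$ is itself squared) that demotes these to the harmless $e^{-4\theta t}$ of the claim.
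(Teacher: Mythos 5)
You have misread the target. In eq.~\eqref{eq:dtv-U-t-def} the superscript ${}^2$ is a typo, as the paper's own proof of this lemma makes clear: the very first displayed line applies \cref{lemma:supportinequality} linearly,
\[
d_\tv^{\x_0}(W_i,t) \leq \E\left[d_\tv^{\x_0}(v_i,t-\xi_i)\one_{\{\xi_i \leq t\}}\right] + \P(\xi_i > t),
\]
which only makes sense if $d_\tv^{\x_0}(W_i,t)$ is the total variation distance itself (not its square). Likewise, $d_\tv^{\x_0}(v,s)^2 = \big(\tfrac12\sum_\omega|\P_{\x_0}(X_s(v)=\omega)-\tfrac1q|\big)^2$ is used explicitly later. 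So the quantity to bound is $\sum_i \delta_i^2$, not $\sum_i\delta_i^4$.

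This matters: your Steps 1--3 give $\sum_i\delta_i^2\le\tfrac14\big(qn\Delta\autII_t+C_q\,n\,e^{-2\theta t}\big)+O(n^{-9})$, and the $e^{-2\theta t}$ there cannot be removed within your framework --- for two adjacent $u,v\in W_i$ (allowed by the hypotheses), the event in which one walk reaches the bottom (e.g.\ via coalescence at time $O(1)$ followed by the merged walk surviving) has probability $\Theta(e^{-\theta t})$, and for $q\ge 3$ with $k_u+k_v\not\equiv 0\pmod q$ this genuinely feeds a term of order $e^{-\theta t}$ into $D_i(\vec k)$, hence $e^{-2\theta t}$ into $|D_i(\vec k)|^2$. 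That is strictly larger than the lemma's $(t+1)e^{-4\theta t}$ whenever $\autII_t(\x_0)$ is subdominant (e.g.\ $\x_0=\x_\alt$ at small $\theta$), so your Step 3 estimate, while correct as a crude bound, is too weak to prove the statement. Your Step 4 upgrade via $\delta_i\le 3e^{-\theta t}$ is what manufactures the $e^{-4\theta t}$, but it only serves the fourth-power target that you introduced by misreading the definition; it does not bear on $\sum_i\delta_i^2$. (Even for the fourth-power reading, your $C_q$ is $\Omega(q^3)$, which is not absorbed by the claimed $18q\Delta$ for large $q$.)

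The paper takes a different route that sidesteps the pair/triple analysis entirely: it reduces the projection onto $W_i$ to a \emph{single} random vertex $v_i$ by waiting for the backward history of $W_i$ to shrink to one site, at the cost of an additive $3e^{-2\theta t}$ (the probability of not shrinking). Squaring then turns $3e^{-2\theta t}$ into $18e^{-4\theta t}$ --- this is where the $e^{-4\theta t}$ actually comes from --- and the single-site piece is controlled via Cauchy--Schwarz (yielding the $t^2$) and the autocorrelation $\autII_{t-r}(\x_0)$, whose spectral decomposition from \cref{prop:autocorrelation} lets one absorb $e^{-2\gamma_l(t-r)-4\theta r}\le e^{-(2\gamma_l\wedge 4\theta)t}$. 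The key structural idea you are missing is precisely this: push all of the multi-site difficulty into a probability that appears \emph{additively} at the TV level (so it squares), rather than into a squared Fourier coefficient (which does not).
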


\begin{proof}
Let $\xi_i$ denote the minimum $r\geq 0$ where the backwards dynamics, started at time $t$ from every $v\in W_i$, has its surviving history shrink to a single (random) vertex $v_i$ at time $t-r$. We have that $\P(\xi_i>t)\leq \binom{|W_i|}2 e^{-2\theta t}$, and applying  \cref{lemma:supportinequality}, we get
\begin{align*}
d_\tv^{\x_0}(W_i,t) \leq \E\left[d_\tv^{\x_0}(v_i,t-\xi_i)\one_{\{\xi_i \leq t\}}\right] + \P(\xi_i > t) \leq \E\left[d_\tv^{\x_0}(v_i,t-\xi_i)\one_{\{\xi_i \leq t\}}\right] + 3 e^{-2 \theta t}\,.
\end{align*}
Just as at the beginning of \Cref{subsec:reductiontoA2}, if $W'_i$ is a $\frac{1}{8}(\log n)^{1+\beta}$-thickening of $W_i$, that is,
\[
W_i':=\bigcup_{v \in W_i}B_v\left(\tfrac{1}{8}(\log n)^{1+\beta}\right)\,,
\]
then $v_i \in W_i'$ for every $i$, with probability at least $1-O(n^{-10})$. Thus, if $v_i^r$ denotes the vertex $v\in W_i'$ that maximizes $d_\tv^{\x_0}(v,t-r)$, then we can bound
\[
\E\left[d_\tv^{\x_0}(v_i,t-\xi_i)^2\one_{\{\xi_i \leq t\}}\right] \leq \E\left[d_\tv^{\x_0}(v_i^{\xi_i},t-\xi_i)^2\one_{\{\xi_i \leq t\}}\right] + O(n^{-10})\,.
\]
Exactly as in \cref{eq:Fi-upper-bnd,eq:integration}, we have
$ \P\left(\xi_i \in (s,s+h)\right)\leq 2h \binom{|W_i|}2 e^{-2\theta s}$, and so, if we let $G_i$ denote the distribution function of $\xi_i$, then 
\[
\E\left[d_\tv^{\x_0}(v_i^{\xi_i},t-\xi_i)\one_{\{\xi_i \leq t\}}\right] \leq \int_0^{t} d_\tv^{\x_0}(v_i^r,t - r) \d G_i(r) \leq 6 \int_0^{t} d_\tv^{\x_0}(v_i^r,t - r) e^{-2\theta r} \d r\,.
\]
Combining the last 4 displays, along with $(a+b)^2 \leq 2(a^2+b^2)$ and Jensen's inequality, we get
\begin{align*}
d_\tv^{\x_0}(W_i,t)^2 &\leq 72 \left(\int_0^{t} d_\tv^{\x_0}(v_i^r,t - r) e^{-2\theta r} \d r\right)^2 + 18 e^{-4 \theta t} +O(n^{-10}) \\
&\leq 72 t\int_0^{t} d_\tv^{\x_0}\left(v_i^r,t - r\right)^2 e^{-4\theta r} \d r + 18 e^{-4 \theta t}  + O(n^{-10})\,.
\end{align*}
This yields
\begin{equation}
    \label{eq:dtv-Wi-m}
\sum_{i=1}^m d_\tv^{\x_0}(W_i,t)^2 \leq 72 t\int_0^{t} \Big(\sum_{i=1}^m d_\tv^{\x_0}(v_i^r,t - r)^2\Big) e^{-4\theta r} \d r + 18 ne^{-4 \theta t}  + O(n^{-9})\,.
\end{equation}
Since $\dist_G(W_i,W_j) \geq \frac{1}{2}(\log n)^{1+\beta}$, we have $\dist_G(W_i',W_j') >\frac{1}{4}(\log n)^{1+\beta}>0$
by our definition of the $W_i'$. In particular, the vertices $\{v_i^r\}$ are all distinct for any fixed $r$, and so we may replace the sum over $v_i^r$ ($i=1,\ldots,m$) by one running over all $n$ vertices:
\[
\sum_{i=1}^m d_\tv^{\x_0}\left(v_i^r,t - r\right)^2 \leq \sum_{v}d_\tv^{\x_0}\left(v,t - r\right)^2 
\leq 2|E|\sum_v \pi_G(v) d_\tv^{\x_0}\left(v,t-r\right)^2
\,,\]
using that $2|E| \pi_G(v) = d_G(v) \geq 1$.
Observe that $d^{\x_0}_\tv(v,s)$ is nothing but the total variation distance of the single-site marginal $X_t(v)$ from the uniform distribution:
\[ d^{\x_0}_\tv(v,s)^2 = \bigg(\frac12\sum_{\omega\in\Cq}\Big|\P_{\x_0}(X_s(v)=\omega)-\frac1q\Big|\bigg)^2 \leq \frac{q}4 \sum_{\omega\in\Cq} \Big(\P_{\x_0}(X_s(v)=\omega)-\frac1q\Big)^2\,,\]
which, upon recalling the definition of $\autII_t(\x_0)$ from \cref{def:autocorrelationL2}, implies that
\[ \sum_{i=1}^m d_{\tv}^{\x_0}\left(v_i^r,t-r\right)^2 \leq \frac q2 |E| \autII_{t-r}(\x_0)\,. \]
Revisiting \cref{eq:dtv-Wi-m}, and writing $|E| \leq \Delta n/2$ (noting in passing that the maximum degree $\Delta$ is less than $\gKap e$ by \cref{eq:expansion}), we get
\[
\sum_{i=1}^m d_\tv^{\x_0}(W_i,t)^2 \leq 18 q \Delta n \left(t\int_0^{t} \autII_{t-r}(\x_0) e^{-4\theta r} \d r + e^{-4 \theta t} \right) + O(n^{-9})\,.
\]
Now, by Proposition \ref{prop:autocorrelation}, there exist $\alpha_l \geq0$ with $\sum_l \alpha_l = \autII_0(\x_0) = \frac{q-1}q$ and exponents $\gamma_l\in[\theta,2-\theta]$ such that $\autII_{t-r}(\x_0) = \sum_{l=1}^n \alpha_l e^{-2\gamma_l(t-r)}$, and so it follows that
\begin{align*}
\int_0^{t} \autII_{t-r}(\x_0) e^{-4\theta r} \d r &= \sum_l \alpha_l\int_0^{t} e^{-2\gamma_l(t-r)-4\theta r} \d r \leq \sum_l \alpha_l\int_0^{t} (e^{-2\gamma_l t} + e^{-4\theta t})\d r  \\&= t\autII_t(\x_0) + \frac{q-1}{q} t e^{-4\theta t}\,,    
\end{align*}
where we used $e^{-2\gamma_l(t-r)-4\theta r} \leq e^{-(2\gamma_l\,\wedge\, 4\theta)t} $. This concludes the proof.
\end{proof}

\begin{proof}[Proof of \cref{lem:dtv-U-bound}]
Using \cref{lemma:reductiontoA2}, we deduce from \cref{eq:lastbound} that
\begin{align}\label{eq:sum-E-Ui-t1-tau}
\sum_{i=1}^m\E\left[d_\tv^{\x_0}(U_i^{\tau_i},t_1-\tau_i)^2 \one_{\{\tau_i<t_1\}}\right] \leq &Ct_1^2\, e^{4(\log n)^{1-\frac{\gAlpha}{2}}}n\int_0^{t_1}(\autII_{t_1 - s}(\x_0)+e^{-4\theta (t_1-s)})e^{-4\theta s}\d s
 + O(n^{-9})\,.
\end{align}
Put $s = b_1 (\log n)^{1-\frac\gAlpha2}$ for $b_1=3/\theta$ as per \eqref{eq:t1-def}, whence $t_1 = t_0 + s$. Writing $\autII_t(\x_0) = \sum_{l=1}^n \alpha_l e^{-2\gamma_l t}$ as in the above lemma, with $\alpha_l\geq 0$ and $\sum_l \alpha_l = \frac{q-1}q < 1$, we use that $1+\frac{q}{q-1}\leq 3$ to bound
\begin{align*}
\autII_{t_1 - s}(\x_0)+e^{-4\theta (t_1-s)} &\leq 3\sum_{l=1}^n \alpha_l e^{-(2\gamma_l\;\wedge\; 4\theta)(t_1-s)} = 3\sum_{l=1}^n \alpha_l\left(e^{-2\gamma_l t_1} \vee e^{-4\theta t_1}\right)e^{(2\gamma_l\;\wedge\; 4\theta) s}\\
&\leq 3 \left(\autII_{t_1}(\x_0)\;\vee\; ne^{-4\theta t_1}\right)  e^{4\theta s}\,.
\end{align*}
Plugging this in \cref{eq:sum-E-Ui-t1-tau}, the integral there is bounded from above by $3 t_1 (\autII_{t_1}(\x_0) \vee e^{-4\theta t_1})$, so
\[ \sum_{i=1}^m\E\left[d_\tv^{\x_0}(U_i^{\tau_i},t_1-\tau_i)^2 \one_{\{\tau_i<t_1\}}\right] \leq C't_1^3\, e^{4(\log n)^{1-\frac{\gAlpha}{2}}}n \left(\autII_{t_1}(\x_0) \;\vee\; e^{-4\theta t_1}\right) + O(n^{-9})\,. \]    
Combined with \cref{eq:dtv-Ui-bnd-1}, this completes the proof.
\end{proof}

\subsection{Proof of upper bound in \cref{thm:main}}
Combining \cref{lem:reduce-X-t1(V_i)-product-mu,lem:Xt-from-V-to-U,lem:dtv-U-bound}, we get
\[
d^{\x_0}_\tv(t_2)^2 \leq 
C t_1^3\,e^{4(\log n)^{1-\frac{\gAlpha}{2}}}n\left(\autII_{t_1}(\x_0)\;\vee\;e^{-4\theta t_1}\right) + O\big(e^{-8(\log n)^{1-\frac\gAlpha2}}\big)\,.
\]
By definition of $t_1$ in \cref{eq:t1-def} and $T_{\x_0}$ in \cref{def:Tx0}, using \cref{eq:A2submultiplicativity1} from \Cref{prop:autocorrelation} yields 
\[n
\left(\autII_{t_1}(\x_0)\;\vee\;e^{-4\theta t_1}\right) \leq e^{-2\theta b_1(\log n)^{1-\frac{\gAlpha}{2}}}\,.\] Hence, the choice $b_1=(3/\theta)(\log n)^{1-\frac\gAlpha2}$ shows that
\[
d^{\x_0}_\tv(t_2)^2 \leq  O\big(t_1^3 e^{-2(\log n)^{1-\frac\gAlpha2}}\big) < C e^{-(\log n)^{1-\frac\gAlpha2}}\,,\]
for some other $C>0$ depending only on $\theta,q,\gAlpha,\gKap$. This establishes \cref{eq:ubgoal} and concludes the proof of the upper bound.
\qed

\begin{remark}\label{remark:uniform}
The proof holds verbatim if we replace $\x_0$ with the uniform initial condition $\cU$.
 However, as $\autII_t(\cU)=0$, the $T_{\x_0}$ part can be dropped, thus the upper bound in \cref{eq:uniformmixing} follows.
\end{remark}

\section{Lower bounds}\label{sec:lowerbounds}

\subsection{Autocorrelation lower bound}
In this section, we show the $\tmix^{\x_0} \gtrsim T_{\x_0}$ part of \cref{eq:mixing}. 

Let $G$ be a graph satisfying \cref{eq:expansion} for some $\gAlpha,\gKap$, and fix a configuration $\x_0$. We will show a stronger bound: there exists some $C^\dagger=C^\dagger(\theta,q,\gAlpha,\gKap)>0$ such that, for every $\epsilon>0$, if
\begin{equation}
    \label{eq:t-dagger-def}
t^\dagger = T_{\x_0} - \frac1{2\theta}\log(1/\epsilon)-C^\dagger
\end{equation}
(where $T_{\x_0}$ is as in \cref{def:Tx0}), then
\begin{equation}\label{eq:lbautocorrelationgoal}
\|\P_{\x_0}(X_{t^\dagger}\in\cdot)-\mu_G\|_\tv >1-\epsilon\,,
\end{equation}
and in particular, $\tmix^{\x_0}(1-\epsilon)>t^\dagger$.

To establish this, define the statistics
\begin{equation}\label{eq:R(X)-stat}
\cR_t^{\x_0}(\x) = \sum_{v\in V} \pi_G(v)\sum_{\omega\in\Cq}a_{v,\omega,t}\Big(\one_{\{\x(v)=\omega\}}-\frac1q\Big)\,,
\end{equation}
where 
\[
a_{v,\omega,t} = \P_{\x_0}(X_t(v)=\omega)-\frac{1}{q}\qquad\mbox{for all $v\in V$ and $\omega\in\Cq$}\,.
\]
Notice that $\E_{\mu_G}[\cR_t^{\x_0}(Y)] = 0$ (since $\P_{\mu_G}(Y(v)=\omega)=1/q$ for all $v,\omega$), whereas
$\E_{\x_0}[\cR_t^{\x_0}(X_t)]$ is nothing but the autocorrelation:
\begin{align*}
\E_{\x_0}[\cR_t^{\x_0}(X_t)] &= \sum_{v\in V}\pi_G(v)\sum_{\omega\in\Cq} a_{v,\omega,t}^2 = \sum_{v\in V} \pi_G(v)\sum_{\omega\in\Cq}\left(\P_{\x_0}(X_t(v)=\omega)-\frac{1}{q}\right)^2 
= \autII_t(\x_0)
\end{align*}
as given in \cref{def:autocorrelationL2}. 

We next need a bound on the variance of $\cR_t^{\x_0}(X_t)$:
\begin{claim}\label{clm:var-R(Xt)}
There exists $C_0 = C_0(\theta,q,\gAlpha,\gKap)$ such that
\[
\Var_{\x_0}(\cR_t^{\x_0}(X_t)) \leq \frac{C_0}{n}\autII_t(\x_0) \,, \quad \Var_{\mu_G}(\cR_t^{\x_0}(Y)) \leq \frac{C_0}{n}\autII_t(\x_0)\,.
\]
\end{claim}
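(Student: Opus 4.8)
The plan is to exploit that $\cR_t^{\x_0}$ is a \emph{linear statistic} and to reduce both variances to a sum of pairwise covariances controlled, via the dual coalescing walks, by the coalescence probabilities $\P(v\leftrightsquigarrow w)$. Write $a_v=(a_{v,\omega,t})_{\omega\in\Cq}$ and $b_v:=\sum_{\omega\in\Cq}a_{v,\omega,t}^2=\|a_v\|_2^2$, and note $\sum_\omega a_{v,\omega,t}=0$; then $\cR_t^{\x_0}(\x)=\sum_{v\in V}\pi_G(v)f_v(\x)$ with $f_v(\x):=a_{v,\x(v),t}$ depending on $\x$ only through $\x(v)$ and satisfying $|f_v|\le\sqrt{b_v}$ pointwise, $\E_{\mu_G}[f_v(Y)]=0$, $\E_{\x_0}[f_v(X_t)]=b_v$, and $\sum_v\pi_G(v)b_v=\autII_t(\x_0)$. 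Hence $\Var(\cR_t^{\x_0})=\sum_{v,w}\pi_G(v)\pi_G(w)\Cov(f_v,f_w)$ under either law, and it suffices to show $|\Cov(f_v,f_w)|\le C'\sqrt{b_vb_w}\,\P(v\leftrightsquigarrow w)$ for an absolute constant $C'$ and then sum. For $\mu_G$ this is an exact computation: sampling $Y$ by the coalescing-walk/CFTP recipe of \Cref{sec:couplingfromthepast} and conditioning on the cluster partition, distinct clusters get independent uniform colors, so $\E_{\mu_G}[a_{v,Y(v),t}\mid\text{partition}]=0$ when $v,w$ lie in different clusters while on $\{v\leftrightsquigarrow w\}$ we have $Y(v)=Y(w)$ uniform; this gives $\Cov_{\mu_G}(f_v,f_w)=\tfrac1q\,\P(v\leftrightsquigarrow w)\langle a_v,a_w\rangle$, hence $|\Cov_{\mu_G}(f_v,f_w)|\le\P(v\leftrightsquigarrow w)\sqrt{b_vb_w}$ by Cauchy--Schwarz.

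For $\Var_{\x_0}$ at time $t$ I would run the dual walks $\{Z^v_s\}_{s\in[0,t]}$ of \Cref{sec:duality} and condition on the dual forest in $[0,t]$. Using again that distinct clusters (killed or surviving) receive independent colors and that $\sum_\omega a_{v,\omega,t}=0$, the covariance splits as
\[
\Cov_{\x_0}(f_v,f_w)=\tfrac1q\,\P\big(Z^v,Z^w\ \text{coalesce in}\ [0,t]\ \text{into a killed cluster}\big)\,\langle a_v,a_w\rangle+\Cov(\phi_v,\phi_w)\,,
\]
where $\phi_v:=a_{v,\x_0(Z^v_t),t}\,\one_{\{Z^v\ \text{survives to the bottom}\}}$, so that $\E[\phi_v]=b_v$ and $|\phi_v|\le\sqrt{b_v}$. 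The crucial observation is that $\phi_v$ is a function of the trajectory of $Z^v$ \emph{alone} (its survival and its endpoint are read off that trajectory, including the part it inherits after any coalescence). Conditioning on this trajectory $\mathcal Z^v$, the walk $Z^w$ evolves from fresh clocks and coalesces with $Z^v$ only upon hitting the space--time footprint of $\mathcal Z^v$, off which $\phi_w$ is unaffected by the conditioning. This yields $|\E[\phi_w\mid\mathcal Z^v]-b_w|\le 2\sqrt{b_w}\,\P(Z^w\ \text{hits the footprint of}\ \mathcal Z^v\mid\mathcal Z^v)$, and pairing against $\phi_v$ and taking expectations gives $|\Cov(\phi_v,\phi_w)|\le 4\sqrt{b_vb_w}\,\P(v\leftrightsquigarrow w)$; together with the first term, $|\Cov_{\x_0}(f_v,f_w)|\le 5\sqrt{b_vb_w}\,\P(v\leftrightsquigarrow w)$.

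To carry out the summation I would use the crude geometric bound $\P(v\leftrightsquigarrow w)\le 2e^{-\theta\dist_G(v,w)/2}$: coalescence of two independent rate-$(1-\theta)$ killed walks from $v$ and $w$ requires a combined number of move-updates at least $\dist_G(v,w)$, and the number of moves a single walk makes before its killing is geometric with tail $\le e^{-\theta k}$. Then, with $\sqrt{b_vb_w}\le\tfrac12(b_v+b_w)$, symmetry in $v\leftrightarrow w$, the bound $\pi_G(w)\le\gKap e/n$ (since \cref{eq:expansion} forces bounded degree), and $|B_v(r)|\le\gKap e^{r^{1-\gAlpha}}$,
\[
\sum_{v,w}\pi_G(v)\pi_G(w)\sqrt{b_vb_w}\,\P(v\leftrightsquigarrow w)\ \le\ \frac{2\gKap e}{n}\Big(1+\gKap\sum_{r\ge1}e^{r^{1-\gAlpha}-\theta r/2}\Big)\sum_v\pi_G(v)b_v\ =\ \frac{C}{n}\,\autII_t(\x_0)\,,
\]
with $C=C(\theta,\gAlpha,\gKap)<\infty$ since $r^{1-\gAlpha}=o(r)$. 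This gives both bounds with $C_0=5C$ (which one may, as in the statement, regard as also depending on $q$).

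I expect the main obstacle to be the $\P_{\x_0}$-covariance bound above: making fully rigorous that $\phi_v$ is measurable with respect to the trajectory of $Z^v$ and that, conditionally on that trajectory, $Z^w$ ``runs freely off the footprint'', so that the only source of correlation between $\phi_v$ and $\phi_w$ is a coalescence event of probability exponentially small in $\dist_G(v,w)$. The $\mu_G$ case and the final summation are then routine. (Note that everything here is real-valued, so no complex conjugates are needed in $\Cov$ despite the $\Cq$-valued encoding.)
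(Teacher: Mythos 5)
Your strategy matches the paper's at the structural level: expand $\Var(\cR_t^{\x_0})$ into a double sum of pairwise covariances, bound each covariance by the coalescence probability $\P(v\leftrightsquigarrow w)$ of the dual walks, extract a factor of $\autII_t(\x_0)$, and close the sum using the exponential decay of $\P(v\leftrightsquigarrow w)$ in $\dist_G(v,w)$ together with \cref{eq:expansion}. Where you diverge is the per-pair estimate. The paper uses the crude AM--GM bound $|a_{u,\omega_1,t}a_{v,\omega_2,t}|\le\tfrac12(a_{u,\omega_1,t}^2+a_{v,\omega_2,t}^2)$, so that $\autII_t(\x_0)=\sum_{u,\omega}\pi_G(u)a_{u,\omega,t}^2$ comes out by symmetry, and then bounds $|\Cov_{\x_0}(\one_{\{X_t(u)=\omega_1\}},\one_{\{X_t(v)=\omega_2\}})|\le\P(u\leftrightsquigarrow v)$ by the one-line coupling that makes $X_t$ agree with an independent copy $\tilde X_t$ on $\{\tau>t\}$, $\tau$ the coalescence time. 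You instead aim for the sharper per-pair inequality $|\Cov(f_v,f_w)|\le C'\sqrt{b_vb_w}\,\P(v\leftrightsquigarrow w)$, proving it for $\mu_G$ by an exact cluster computation and for $\P_{\x_0}$ by decomposing $f_v=\phi_v+\eta_v$ and a trajectory-conditioning (``footprint'') estimate. That route is correct in outline, but it is more work than needed, and the footprint step you flag as the main obstacle can be bypassed entirely: the same one-line coupling gives
\[
|\Cov_{\x_0}(f_v,f_w)|=\big|\E\big[f_v(X_t)\big(f_w(X_t)-f_w(\tilde X_t)\big)\one_{\{\tau\le t\}}\big]\big|\le 2\|a_v\|_\infty\|a_w\|_\infty\P(v\leftrightsquigarrow w)\le 2\sqrt{b_v b_w}\,\P(v\leftrightsquigarrow w)\,,
\]
after which your summation goes through unchanged. (One minor point: the paper's tail estimate $\P(v\leftrightsquigarrow w)\le(1-\theta)^{\dist_G(v,w)}$---requiring the first $\dist_G(v,w)$ cumulative updates on the two walks to all be non-killing---is both simpler and stronger than your $2e^{-\theta\dist_G(v,w)/2}$, though both are ample for the summation.)
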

\begin{proof}
It will be convenient to work with the following formula for $\cR_t^{\x_0}(X_t)$, equivalent to \cref{eq:R(X)-stat}:
\[
\cR_t^{\x_0}(\x) = \sum_{v\in V} \pi_G(v)\sum_{\omega\in\Cq}a_{v,\omega,t}\one_{\{\x(v)=\omega\}}\,;
\]
indeed, the equivalence is due to the fact that $\sum_{\omega} a_{v,\omega,t} = 0$ for every $v$ and $t$. We thus have
\begin{align*}
\Var_{\x_0}(\cR_t^{\x_0}(X_t)) &= \sum_{u,v\in V}\sum_{\omega_1,\omega_2\in \Cq}\pi_G(u)\pi_G(v)a_{u,\omega_1,t}a_{v,\omega_2,t}\Cov_{\x_0}(\one_{\{X_t(u)=\omega_1\}},\one_{\{X_t(v)=\omega_2\}})\\ 
&\leq \sum_{u,v} \sum_{\omega_1,\omega_2}\pi_G(u)\pi_G(v)\left(\frac{a_{u,\omega_1,t}^2+a_{v,\omega_2,t}^2}{2}\right)\left|\Cov_{\x_0}(\one_{\{X_t(u)=\omega_1\}},\one_{\{X_t(v)=\omega_2\}})\right|\,,
\end{align*}
which, by the symmetry between $(u,\omega_1)$ and $(v,\omega_2)$, is in turn equal to 
\begin{align*}
\sum_{u,\omega_1}&\pi_G(u)a_{u,\omega_1,t}^2 \sum_{v,\omega_2}\pi_G(v)\left|\Cov_{\x_0}(\one_{\{X_t(u)=\omega_1\}},\one_{\{X_t(v)=\omega_2\}})\right|\\
&\leq \frac{\Delta q}{n}\sum_{u,\omega_1}\pi_G(v)a_{v,\omega,t}^2\max_{\omega_2}\sum_v\left|\Cov_{\x_0}(\one_{\{X_t(u)=\omega_1\}},\one_{\{X_t(v)=\omega_2\}})\right|\,.
\end{align*}
(In the last inequality we used $\pi_G(v) \leq \Delta/n$ where $\Delta\leq \gKap e$ is the maximum degree of $G$.) 

We next argue that the last sum over $v$ is bounded by some $C(\theta,\gKap,\gAlpha)$ for all $u,\omega_1,\omega_2$. To see this, let $X_t'$ be a an independent copy of $X_t$, whence
\[ \left|\Cov_{\x_0}(\one_{\{X_t(u)=\omega_1\}},\one_{\{X_t(v)=\omega_2\}})\right|=\left|\P(X_t(u)=\omega_1,\,X_t(v)=\omega_2)-\P(X_t(u)=\omega_1,\, X'_t(v)=\omega_2)\right|\,,\]
and let $\tau$ be the minimum time $s\geq0$ in the backward dynamics where the walks $(Z^u_s)$ and $(Z^v_s)$ coalesce. We can couple $(X_t(u),X_t(v))$ with $(X_t(u),X_t'(v))$ in such a way that the two pairs are identical if $\tau>t$ (the walks do not meet until until the backward dynamics reaches time $0$). Under this coupling, the expression for 
$\left|\Cov_{\x_0}(\one_{\{X_t(u)=\omega_1\}},\one_{\{X_t(v)=\omega_2\}})\right|$ simplifies to
\begin{align*} 
\left|\P(X_t(u)=\omega_1 ,\,X_t(v)=\omega_2,\,\tau\leq t)-\P(X_t(u)=\omega_1,\, X'_t(v)=\omega_2,\tau\leq t)\right| \leq \P(\tau\leq t)\,.
\end{align*} 
Note that $\P(\tau \leq t) < \P(\tau < \infty)= \P(u \leftrightsquigarrow v)$ (the probability that the walks started at $u$ and $v$ meet before any of them dies). If $\dist_G(u,v)=r$, then in order for the walks at $u$ and $v$ to possibly meet, the first $r$ updates that either of them receive must be non-killing (with probability $1-\theta$ each), and so $\P(u \leftrightsquigarrow v) \leq (1-\theta)^r \leq e^{-\theta r}$. Hence, using \cref{eq:expansion},
\begin{align*}
\max_{\omega_2}\sum_v\left|\Cov_{\x_0}(\one_{\{X_t(u)=\omega_1\}},\one_{\{X_t(v)=\omega_2\}})\right| &\leq \sum_{r \geq 0}|\{v \, : \; \dist_G(u,v)=r\}|e^{-\theta r} \\ &\leq \gKap \sum_{r \geq 0}e^{ r^{1-\gAlpha}-\theta r }=:C_1\,.
\end{align*}
In conclusion,
\[
\Var_{\x_0}(\cR_t^{\x_0}(X_t)) < \frac{\Delta C_1 q}n\sum_{v}\pi_G(v)a_{v,\omega,t}^2 = \frac{\Delta C_1 q}{n}\autII_t(\x_0)\,.
\]
The same proof works for $\Var_{\mu_G}(\cR_t^{\x_0}(Y))$, noting directly that under the same coupling as above 
\[
|\Cov_{\mu_G}(\one_{\{Y(u)=\omega_1\}},\one_{\{Y(v)=\omega_2\}})| \leq \P(\tau <\infty) = \P(u \leftrightsquigarrow v) \leq \exp[-\theta \dist_G(u,v)]\,,
\]
the same upper bound we used for the covariance when estimating $\Var_{\x_0}(\cR_t^{\x_0}(X_t))$.
\end{proof}
We are ready to prove \cref{eq:lbautocorrelationgoal}. Recall that $\E_{\x_0}[\cR_t^{\x_0}(X_t)] = \autII_t(\x_0) > 0 = \E_{\mu_G}[\cR_t^{\x_0}(Y)]$, and define the set of configurations
\[
E^\dagger = \bigg\{\x \in \Cq^V \, : \;  \cR_{t^\dagger}^{\x_0}(\x) < \frac12 \autII_{t^\dagger}(\x_0)\bigg\}\,.
\]
By Chebyshev's inequality,
\begin{align*}
\P_{\x_0}(X_{t^\dagger} \in E^\dagger) = \P_{\x_0}\Big(\cR_{t^\dagger}^{\x_0}(X_{t^\dagger}) -\E_{\x_0}[\cR_{t^\dagger}^{\x_0}(X_{t^\dagger})] < -\frac{\autII_{t^\dagger}(\x_0)}2\Big)
\leq \frac{4\Var_{\x_0}(\cR_{t^\dagger}^{\x_0}(X_{t^\dagger}))}{\autII_{t^\dagger}(\x_0)^2} \leq \frac{4 C_0}{n\autII_{t^\dagger}(\x_0)}\end{align*}
using \cref{clm:var-R(Xt)} (with $C_0$ from that claim).
Recalling from \cref{eq:t-dagger-def} that $t^\dagger = T_{\x_0}-\frac1{2\theta} \log(1/\epsilon) - C^\dagger$, we see that if $C^\dagger = \frac1{2\theta}\log(8C_0)$ then the sub-multiplicativity of $\autII$, as per \cref{eq:A2submultiplicativity1}, yields
\[ \P_{\x_0}(X_{t^\dagger}\in E^\dagger) 
\leq \frac{4C_0}{n\autII_{T_{\x_0}}(\x_0) e^{2\theta (T_{\x_0}-t^\dagger)}} = \frac{\epsilon}2\,,
\]
with the last equality following from the definition of $\autII$ in \cref{def:autocorrelationL2}.
Similarly, 
\begin{align*}
\P_{\mu_G}(Y \notin E^\dagger) = \P_{\mu_G}\Big(\cR_{t^\dagger}^{\x_0}(Y)-\E_{\mu_G}[\cR_{t^\dagger}^{\x_0}(Y)] > \frac{\autII_{t^\dagger}(\x_0)}2\Big)  
\leq \frac{4\Var_{\mu_G}(\cR_{t^\dagger}^{\x_0}(Y))}{\autII_{t^\dagger}(\x_0)^2} &\leq \frac{\epsilon}2\,.
\end{align*}
Combining the last two displays gives
\[
\|\P_{\x_0}(X_{t^\dagger} \in \cdot) - \mu_G\|_\tv \geq \P_{\mu_G}(Y \in E^\dagger)  - \P_{\x_0}(X_{t^\dagger} \in E^\dagger) \geq 1-\epsilon\,,
\]
thereby establishing \cref{eq:lbautocorrelationgoal}.
\qed

\subsection{Correlation lower bound}
In this section, we show the $\tmix^{\x_0} \gtrsim \frac{1}{4\theta}\log n$ part of \cref{eq:mixing} and \cref{eq:uniformmixing}. Because the proofs of the two are essentially the same, we simply prove \cref{eq:mixing} and describe in \Cref{remark:uniformlb} the single modification needed in the case of a uniform initial state $\cU$.

Let $G=(V,E)$ be a connected graph with $|V|=n$ satisfying \cref{eq:expansion} for some constants $\gAlpha,\gKap$. We will show that there exists a constant $C^\ddagger>0$ depending only on $(\theta,\gAlpha,\gKap)$ such that if
\begin{equation}\label{eq:t-ddagger-def}
t^\ddagger = \frac{1}{4\theta}\log n -C^\ddagger (\log n)^{1-\frac\gAlpha2}\,,
\end{equation}
then, for all initial states $\x_0$,
\begin{equation}\label{eq:lbsecondordergoal}
\|\P_{\x_0}(X_t\in\cdot)-\mu_G\|_\tv \geq 1- O(e^{-2 (\log n)^{1-\frac\gAlpha2}})\,,
\end{equation}
implying in particular that, for every $0<\epsilon<1$ fixed, $\tmix^{\x_0}(1-\epsilon)>t^\ddagger$ provided $n$ is large enough.

For fixed $\x_0,t$, we consider the following distinguishing statistics:
\begin{equation}\label{def:secondorderstatistics}
\cR_t^{\x_0}(\x) = \sum_{u \sim v} \left(\one_{\{\x(u)=\x(v)\}} - \P_{\x_0}\big(X_t(v)=\x(u)\big)\right)\,.
\end{equation}
(Notice $\E_{\x_0}[\cR_t(X_t)]$ does not simplify to $0$ but to
$ \sum_{u\sim v}[\P_{\x_0}(X_t(u)=X_t(v)) - \P_{\x_0}(X_t(u)=X'_t(v))]$ for an i.i.d.\ instance of the chain $X'_t$.) 
By the routine fact $\sum_{k=0}^{q-1} \omega^k = q\one_{\{\omega=1\}}$ for all $\omega \in \Cq$, we can equivalently write $\cR_t^{\x_0}(\x)$ as
\begin{equation}\label{eq:Rt-ham-def}
\cR_t^{\x_0}(\x) = \frac{1}{q}\sum_{k=1}^{q-1}\sum_{u \sim v} \left(\x(u)^k\bar{\x(v)^k} - \x(u)^k\bar{\E_{\x_0}[X_t(v)^k]}\right)
\end{equation}
(NB.\ we omitted the $k=0$ term, as it has no contribution). Then, as $\E_{\mu_G}[Y(v)^k]=\frac{1}{q}\sum_{\omega\in\Cq} \omega^k=0$ for all $k=1,\ldots,q-1$, we have
\begin{equation}\label{eq:meandifference}
\E_{\mu_G}[\cR_t^{\x_0}(Y)] - \E_{\x_0}[\cR_t^{\x_0}(X_t)] = \frac{1}{q}\sum_{k=1}^{q-1}\sum_{u \sim v}\left(\Cov_{\mu_G}(Y(u)^k,Y(v)^k) - \Cov_{\x_0}(X_t(u)^k,X_t(v)^k)\right)\,.
\end{equation}
While the left-hand is clearly real-valued (e.g., via \cref{eq:Rt-ham-def}), a-priori the individual terms we sum over in \cref{eq:meandifference} might not be. The next claim shows that they  are nonnegative real numbers bounded from below by the probability that two random walks coalesce after time $t$.
\begin{claim}\label{claim:covariance}
For every $\x_0 \in \Cq^V$, every two vertices $u,v \in V$ and all $k=1,\ldots,q-1$, 
\begin{equation}\label{eq:covariance}
\Cov_{\mu_G}(Y(u)^k, Y(v)^k) - \Cov_{\x_0}(X_t(u)^k,X_t(v)^k) - \P\big(u \overset{>t}{\leftrightsquigarrow} v\big)\in\R_+\,, 
\end{equation}
where $\big\{u \overset{>t}{\leftrightsquigarrow} v\big\}$ denotes the probability that two random walks $(Z_s^u),(Z_s^v)$ started at $u$ and $v$ with killing rate $\theta$ (and moving rate $1-\theta$) do not coalesce until time $t$, and then coalesce after time $t$.
\end{claim}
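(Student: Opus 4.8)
The plan is to read both covariances off the coupling-from-the-past description of $\mu_G$ (\cref{sec:couplingfromthepast}), and then to show that after the coalescence probabilities cancel, the residual term collapses to a squared modulus, hence a nonnegative real. First, for the equilibrium side: since $\E_{\mu_G}[Y(v)^k]=\tfrac1q\sum_{\omega\in\Cq}\omega^k=0$ for $1\le k\le q-1$, one has $\Cov_{\mu_G}(Y(u)^k,Y(v)^k)=\E_{\mu_G}[Y(u)^k\overline{Y(v)^k}]$, and sampling $Y$ via killed coalescing walks $(Z^u_s),(Z^v_s)$ from $u,v$ makes this exactly $\P(u\leftrightsquigarrow v)$: on $\{u\leftrightsquigarrow v\}$ we have $Y(u)=Y(v)$ and $Y(u)^k\overline{Y(v)^k}=|Y(u)|^{2k}=1$, while on $\{u\nleftrightsquigarrow v\}$ the values are independent and uniform on $\Cq$, contributing $0$.

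For the time-$t$ side I would run the same backward dynamics up to time $t$ only, and split $\E_{\x_0}[X_t(u)^k\overline{X_t(v)^k}]$ over three events: the walks coalesce within time $t$ (then $X_t(u)=X_t(v)$, so the summand is $1$); both walks survive to the bottom of the slab without coalescing — call this event $S$ — (then $X_t(u)=\x_0(Z^u_t)$ and $X_t(v)=\x_0(Z^v_t)$); and the remaining event, on which at least one walk is killed before meeting the other, so that one of $X_t(u),X_t(v)$ is a fresh uniform color and the summand has zero mean. This gives $\E_{\x_0}[X_t(u)^k\overline{X_t(v)^k}]=\P(u\overset{\le t}{\leftrightsquigarrow}v)+\E_{\x_0}[\x_0(Z^u_t)^k\overline{\x_0(Z^v_t)^k}\one_S]$, with $\{u\overset{\le t}{\leftrightsquigarrow}v\}$ the event of coalescence by time $t$. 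Using $\P(u\leftrightsquigarrow v)=\P(u\overset{\le t}{\leftrightsquigarrow}v)+\P(u\overset{>t}{\leftrightsquigarrow}v)$, subtracting and invoking the equilibrium computation shows that the left-hand side of \cref{eq:covariance} equals $\E_{\x_0}[X_t(u)^k]\,\overline{\E_{\x_0}[X_t(v)^k]}-\E_{\x_0}[\x_0(Z^u_t)^k\overline{\x_0(Z^v_t)^k}\one_S]$.

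It remains to show this residual is a nonnegative real. I would build the coalescing pair $(Z^u,Z^v)$ from two \emph{independent} killed walks $(\widetilde Z^u,\widetilde Z^v)$ — run independently until they first meet while both alive, then coalesced — so that on $S$ (where they never meet while both alive) $(Z^u_t,Z^v_t)=(\widetilde Z^u_t,\widetilde Z^v_t)$, while $\E_{\x_0}[X_t(u)^k]=\E[\x_0(\widetilde Z^u_t)^k\one_{\{\widetilde Z^u\text{ alive at }t\}}]$ (this is the surviving-history identity used in the proof of \cref{prop:autocorrelation}, $\E_{\x_0}[X_t(u)^k]=e^{-\theta t}(H_{(1-\theta)t}\x_0^k)(u)$). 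By independence of $\widetilde Z^u,\widetilde Z^v$, the residual equals $\E[\x_0(\widetilde Z^u_t)^k\overline{\x_0(\widetilde Z^v_t)^k}\one_B]$, where $B$ is the event that both walks are alive at time $t$ and their paths meet while both alive at some time $\sigma\le t$. Conditioning on the first such meeting time $\sigma$ and vertex $w$, the strong Markov property makes the post-$\sigma$ trajectories (together with their killing clocks) conditionally independent killed walks from $w$; hence the conditional expectation of $\x_0(\widetilde Z^u_t)^k\overline{\x_0(\widetilde Z^v_t)^k}\one_{\{\text{both alive at }t\}}$ is $\big|\E_{\x_0}[X_{t-\sigma}(w)^k]\big|^2$. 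Thus the residual equals $\E\big[\big|\E_{\x_0}[X_{t-\sigma}(w)^k]\big|^2\one_{\{\sigma\le t\}}\big]\in\R_+$, which is precisely \cref{eq:covariance}.

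The hard part is the bookkeeping in the last two steps: one must verify that on $S$ the coalescing pair has exactly the law of an independent pair conditioned not to meet, and that conditioning on the first meeting restores the conditional independence of the futures, so that the leftover term genuinely collapses to the squared modulus $|\E_{\x_0}[X_{t-\sigma}(w)^k]|^2$ and not to some complex quantity. Everything else is a direct read-off from the duality, and — unlike the companion escape-probability estimate — this claim uses neither connectivity of $G$ nor the growth hypothesis \cref{eq:expansion}.
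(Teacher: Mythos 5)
Your proof is correct and hinges on the same key step as the paper's: after reading $\Cov_{\mu_G}(Y(u)^k,Y(v)^k)=\P(u\leftrightsquigarrow v)$ off the CFTP description and cancelling the coalescence probabilities, both proofs condition on the first-meeting time and use the strong Markov property to recognize the residual as a conditional squared modulus, hence a nonnegative real. The route to that residual differs, though. The paper introduces a decoupling copy $X_t'(v)$, coupled to $X_t(v)$ by sharing the backward walk from $v$ until the first coalescence time $\tau$ and running independently thereafter, so that $\Cov_{\x_0}(X_t(u)^k,X_t(v)^k)$ is exactly $\E\big[\big(X_t(u)^k\overline{X_t(v)^k}-X_t(u)^k\overline{X_t'(v)^k}\big)\one_{\{\tau\le t\}}\big]$ (the integrand vanishes on $\{\tau>t\}$ by construction), and the residual $\E\big[X_t(u)^k\overline{X_t'(v)^k}\one_{\{\tau\le t\}}\big]$ drops out at once. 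You instead decompose $\E_{\x_0}\big[X_t(u)^k\overline{X_t(v)^k}\big]$ over three events (coalesce by $t$; both survive without coalescing; at least one killed before meeting), then re-express everything through an independent killed pair $(\widetilde Z^u,\widetilde Z^v)$ to land on $\E\big[\x_0(\widetilde Z^u_t)^k\overline{\x_0(\widetilde Z^v_t)^k}\one_B\big]$ with $B$ the event that both walks survive to time $t$ and meet while both alive. Unwinding the paper's residual (its integrand has zero mean whenever either walk is killed by $t$, since a fresh uniform color has vanishing $k$-th moment) shows it equals yours, so the two routes merge before the conditioning step. Your version makes explicit exactly which realizations contribute, at the cost of the coupling bookkeeping you flag at the end; the paper's $X_t'$ trick is more compact because the decoupling copy silently absorbs the survival accounting you carry out by hand. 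You are also right that, unlike \cref{claim:meetinglater}, this claim uses neither connectivity of $G$ nor the growth hypothesis \eqref{eq:expansion}.
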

\begin{proof}
Let $\tau$ be the first time that the walks started from $u$ and $v$ coalesce in the backward dynamics (with $\tau=+\infty$ if either walk gets killed before coalescing). If $X_t'(v)$ denotes a variable that has the same distribution as $X_t(v)$ but is independent of $X_t(u)$, we can couple $(X_t(u),X_t(v))$ with $(X_t(u),X_t'(v))$ by letting the two corresponding random walks $Z^v_s,Z^{\prime\, v}_s$ in the backward dynamics started at $v$ be identical for $s \leq \tau$, and then run independently for $s > \tau$. When doing so, one has $(X_t(u),X_t(v))=(X_t(u),X_t'(v))$  when $\tau>t$, and thus
\begin{align*}
\Cov_{\x_0}(X_t(u)^k,X_t(v)^k) 
&= \E_{\x_0}\left[X_t(u)^k\bar{X_t(v)^k}-X_t(u)^k\bar{X_t'(v)^k}\right]\\
&= \E_{\x_0}\left[\left(X_t(u)^k\bar{X_t(v)^k}-X_t(u)^k\bar{X_t'(v)^k}\right)\one_{\{\tau \leq t\}}\right]\\
&= \P(\tau \leq t)  - \E_{\x_0}\left[X_t(u)^k\bar{X_t'(v)^k}\one_{\{\tau \leq t\}}\right]\,,
\end{align*}
where the final identity follows from the fact that $X_t(u)=X_t(v)$ on $\one_{\{\tau\leq t\}}$.

On the other hand, recall that $\mu_G$ can be perfectly simulated by letting the walks from $V$ run forever in the past until all of them are killed, and noting that if two walks do not coalesce (because one gets killed first) they independently get a uniform spin (with $0$ mean, again  by $\frac{1}{q}\sum_{\omega\in\Cq} \omega=0$). Thus,
\[
\Cov_{\mu_G}(Y(u)^k,Y(v)^k) = \E_{\mu_G}\left[Y(u)^k\bar{Y(v)}^k\right] =\P(\tau < \infty) \,.
\]
By definition, $\P(t < \tau < \infty) = \P\big(u \overset{>t}{\leftrightsquigarrow} v\big)$, so it follows that
\[
\Cov_{\mu_G}(Y(u)^k, Y(v)^k) - \Cov_{\x_0}(X_t(u)^k,X_t(v)^k) = \P\big(u \overset{>t}{\leftrightsquigarrow} v\big) + \E_{\x_0}[X_t(u)^k\bar{X_t'(v)^k}\one_{\{\tau \leq t\}}]\,.
\]
The proof will be finished if we prove that $\E_{\x_0}[X_t(u)^k\bar{X_t'(v)^k}\one_{\{\tau \leq t\}}]$ is real-valued and nonnegative. Indeed, if $\cF_\tau$ denotes the $\sigma$-algebra associated with the stopping time $\tau$ for the backward dynamics, we have
\begin{align*}
\E_{\x_0}\left[X_t(u)^k\bar{X_t'(v)^k}\one_{\{\tau \leq t\}}\right] &= \E_{\x_0}\left[\E\left[X_t(u)^k\bar{X_t'(v)^k}\mid\cF_\tau\right]\one_{\{\tau \leq t\}}\right] \\&= \E_{\x_0}\left[\E\left[X_{t-\tau}(w_\tau)^k\bar{X_{t-\tau}'(w_\tau)^k}\mid\cF_\tau\right]\one_{\{\tau \leq t\}}\right]\,,
\end{align*}
where $w_\tau$ denotes the vertex at which the walks from $u$ and $v$ meet at time $\tau$ (note that $w_\tau$ is $\cF_\tau$-measurable). Finally, since $X_{t-\tau}(w_\tau)$ and $X_{t-\tau}'(w_\tau)$ are conditionally independent given $\cF_\tau$,
\[
\E_{\x_0}\left[\E\left[X_{t-\tau}(w_\tau)^k\bar{X_{t-\tau}'(w_\tau)^k}\mid \cF_\tau\right]\one_{\{\tau \leq t\}}\right] = \E_{\x_0}\Big[\big|\E[X_{t-\tau}(w_\tau)^k\mid\cF_\tau]\big|^2\one_{\{\tau \leq t\}}\Big] \in \R_+\,,
\]
as claimed.
\end{proof}

\begin{remark}\label{remark:uniformlb}
To handle the uniform initial condition $\cU$ rather than a deterministic $\x_0$, redefine the statistics $\cR_t^{\x_0}(\x)$ in \cref{def:secondorderstatistics}  simply as
\[
\cR(\x)=\sum_{u \sim v} \one_{\{\x(u) = \x(v)\}}\,.
\]
Then,  \cref{eq:meandifference} holds true for the new statistics when we replace $\x_0$ by $\cU$. Moreover, in that case we have, in lieu of \cref{claim:covariance}, that
\[\Cov_{\mu_G}(Y(u)^k, Y(v)^k) - \Cov_{\cU}(X_t(u)^k,X_t(v)^k) = \P\big(u \overset{>t}{\leftrightsquigarrow} v\big)\,.\]
The remaining part of the proof in this section then holds verbatim once we replace $\x_0$ by $\cU$.
\end{remark}

Applying \cref{claim:covariance} to \cref{eq:meandifference} shows that, for all $q\geq 2$ (replacing $\frac{q-1}q$ by $\frac12$ in the lower bound),
\begin{equation}\label{eq:meandifference-intermediate}
\E_{\mu_G}[\cR_t^{\x_0}(Y)] - \E_{\x_0}[\cR_t^{\x_0}(X_t)] \geq \frac12 \sum_{u\sim v} \P\big(u \overset{>t}{\leftrightsquigarrow} v\big)\,.
\end{equation}
In light of \cref{eq:meandifference-intermediate}, 
we aim to show that $\P\big(u \overset{>t}{\leftrightsquigarrow}v\big)\approx e^{-2\theta t}$ for every $uv\in E$ (\cref{claim:meetinglater} below), so that one has
$\sum_{u\sim v} \P\big(u \overset{>t}{\leftrightsquigarrow} v\big) \approx n e^{-2\theta t}$, whereas 
$\Var_{\x_0}(\cR_t^{\x_0}(X_t))$ and $\Var_{\mu_G}(\cR_t^{\x_0}(Y))$ are $O(n)$ (see \cref{clm:var-R(Xt)} below); at that point, the proof will be concluded by Chebyshev's inequality.

Note that in order for the walks started from $u$ and $v$ to coalesce after $t$, both of them need to survive separately until time $t$, which incurs in a cost of $e^{-2\theta t}$ for survivability alone. Our next goal is to show that under the assumption of \cref{eq:expansion} and connectedness, for nearby vertices $u,v$ the cost for the remaining conditions (the two walks avoiding each other until time $t$ and coalescing later) is sub-exponential in $t\asymp \log n$. This, together with \cref{claim:covariance}, will yield the required lower bound. The key for this will be a result from \cite{OveisTrevisan12}, which bounds from below the probability of a random walk failing to escape a set $S$ through the conductance of $S$. 

Before applying said result in our setting, we prove the existence of ``good'' balls with limited expansion around every $v \in V$.
For a set of vertices $S$, the conductance $\Phi(S)$ of $S$ is the ratio
\[
\Phi(S) = \frac{|E(S,S^c)|}{\Vol(S)}\,,
\]
where $|E(S,S^c)|$ is the number of edges connecting $S$ to its complement $S^c$, and $\Vol(S)$ is the sum of the degrees of vertices in $S$. The next lemma shows that, for every graph $G=(V,E)$ of subexponential growth of balls as in \cref{eq:expansion} and every vertex $v\in V$, we can find a ball $B_v(r)$ in every scale with conductance at most $O(r^{-\gAlpha})$.

\begin{lemma}\label{lemma:trappingball}
Let $G$ be a graph satisfying \cref{eq:expansion} for some $\gKap > 0$ and $\gAlpha\in(0,1)$. Then, for every integer $\Rone \geq \gKap$ and every $v \in V$, there exists $r\in[\Rone, 2\Rone]$ such that
\[
\frac{|\partial^+ B_v(r)|}{|B_v(r)|} \leq \frac{8}{r^\gAlpha}\,,
\]
where $B_v(r)$ is the ball of radius $r$ in $G$ centered at the vertex $v$, and $\partial^+ B_v(r) = B_v(r+1)\setminus B_v(r)$.
In particular, its conductance has $\Phi(B_v(r)) \leq 8\Delta r^{-\gAlpha} $, where $\Delta \leq \gKap e$ is the maximum degree in $G$.
\end{lemma}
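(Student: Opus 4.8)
The plan is to prove this by a one‑sided averaging (pigeonhole) argument over the dyadic band of radii $r\in[\Rone,2\Rone]$, after first disposing of the ``in particular'' clause. For the conductance bound, note that every edge leaving $B_v(r)$ has its outer endpoint at graph distance exactly $r+1$ from $v$, hence in $\partial^+B_v(r)$; since each such vertex has at most $\Delta$ incident edges, $|E(B_v(r),B_v(r)^c)|\le\Delta\,|\partial^+B_v(r)|$, while $\Vol(B_v(r))=\sum_{w\in B_v(r)}d_G(w)\ge|B_v(r)|$ because $G$ is connected (so every degree is at least $1$). Thus $\Phi(B_v(r))\le\Delta\,|\partial^+B_v(r)|/|B_v(r)|$, and the conductance estimate will follow from the main inequality. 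The bound $\Delta\le\gKap e$ is immediate from \cref{eq:expansion} at radius $1$: taking $v^*$ of maximal degree, $1+\Delta=|B_{v^*}(1)|\le\gKap\exp(1^{1-\gAlpha})=\gKap e$.

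For the main inequality I would argue by contradiction: suppose $|\partial^+B_v(r)|/|B_v(r)|>8r^{-\gAlpha}$ for every integer $r\in[\Rone,2\Rone]$. Since $r\le2\Rone$ throughout this band, $|B_v(r+1)|/|B_v(r)|=1+|\partial^+B_v(r)|/|B_v(r)|>1+8(2\Rone)^{-\gAlpha}$, and multiplying over the $\Rone$ integers $r=\Rone,\dots,2\Rone-1$, using $|B_v(\Rone)|\ge1$, gives
\[
|B_v(2\Rone)|>\bigl(1+8(2\Rone)^{-\gAlpha}\bigr)^{\Rone}.
\]
Applying $\log(1+x)\ge x/2$ on $[0,1]$ (valid once $\Rone$ is large enough that $8(2\Rone)^{-\gAlpha}\le1$, which holds in the regime of interest; the finitely many small $\Rone$ are handled directly via $\log(1+x)\ge\log 2$ and $\Rone\ge\gKap$), together with $2^{-\gAlpha}\ge\tfrac12$, bounds the right side below by $\exp\!\bigl(4\cdot 2^{-\gAlpha}\Rone^{1-\gAlpha}\bigr)$. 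On the other hand \cref{eq:expansion} at radius $2\Rone$ gives $|B_v(2\Rone)|\le\gKap\exp\!\bigl((2\Rone)^{1-\gAlpha}\bigr)=\gKap\exp\!\bigl(2\cdot 2^{-\gAlpha}\Rone^{1-\gAlpha}\bigr)$. Combining the two estimates forces $\exp\!\bigl(2\cdot 2^{-\gAlpha}\Rone^{1-\gAlpha}\bigr)<\gKap$, which is false for $\Rone$ sufficiently large (in the applications $\Rone=\Rone\to\infty$ with $n$), a contradiction; hence some $r\in[\Rone,2\Rone]$ has the asserted property.

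The one delicate point is the constant calibration in the last step: the factor $8$ (and the threshold condition on the scale $\Rone$) is precisely what makes the iterated growth rate $\bigl(1+8(2\Rone)^{-\gAlpha}\bigr)^{\Rone}$ strictly overtake the stretched‑exponential ceiling $\gKap\exp\!\bigl((2\Rone)^{1-\gAlpha}\bigr)$ --- the competing exponents being $4\cdot 2^{-\gAlpha}\Rone^{1-\gAlpha}$ versus $2\cdot 2^{-\gAlpha}\Rone^{1-\gAlpha}$, leaving a genuine multiplicative gap that eventually absorbs the $\gKap$ prefactor. Everything else (the boundary/volume comparison, the telescoping product, and the elementary $\log(1+x)$ estimate) is routine; I expect the write‑up to spend only a line or two making the small‑$\Rone$ bookkeeping precise, with no conceptual obstacle.
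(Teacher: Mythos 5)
Your proof follows the same overall scheme as the paper's (argue by contradiction and iterate the growth of $|B_v(r)|$ across the dyadic band $[\Rone,2\Rone]$, then compare against the stretched‑exponential ceiling from \cref{eq:expansion}), and the conductance/degree step for the ``in particular'' clause is fine. But there is a real gap in the main inequality: your chain of estimates only yields the contradiction $\exp(2\cdot2^{-\gAlpha}\Rone^{1-\gAlpha})<\gKap$, which you concede is false only for $\Rone$ \emph{sufficiently large}. The lemma is claimed (and used, e.g.\ in \cref{claim:meetinglater}) for \emph{every} integer $\Rone\geq\gKap$, and this inequality need not fail for small $\Rone$: for $\gAlpha$ close to $1$ and $\gKap$ moderately large (say $\gAlpha=0.99$, $\gKap=100$, $\Rone=\gKap$) one has $\Rone^{1-\gAlpha}\ll\log\gKap$. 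Your parenthetical dismissal of the small-$\Rone$ cases addresses only the validity of the $\log(1+x)\ge x/2$ estimate, not the fact that the resulting contradiction evaporates.

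The missing ingredient that the paper uses to close exactly this gap is that $|B_v(\Rone)|\geq\Rone$ (not merely $\geq1$): in the contradiction scenario $\partial^+B_v(\Rone)\neq\emptyset$, so $B_v(\Rone)\neq V$ and connectedness forces $|B_v(\Rone)|\geq\Rone+1\geq\Rone\geq\gKap$. This factor of $\gKap$ is what cancels the prefactor in \cref{eq:expansion} \emph{exactly}, with no asymptotics needed. The paper additionally avoids your $8(2\Rone)^{-\gAlpha}\le1$ proviso by using the inequality $\log(1+8x)\geq 2x$ on the whole of $[0,1]$ (valid since $r\geq1$ gives $r^{-\gAlpha}\leq1$), and replaces your crude $r^{-\gAlpha}\geq(2\Rone)^{-\gAlpha}$ by the integral bound $\sum_{r=\Rone}^{2\Rone-1}r^{-\gAlpha}\geq\int_{\Rone}^{2\Rone}r^{-\gAlpha}\,\d r=\tfrac{1-2^{-(1-\gAlpha)}}{1-\gAlpha}(2\Rone)^{1-\gAlpha}\geq\tfrac12(2\Rone)^{1-\gAlpha}$, which together give $|B_v(2\Rone)|>\Rone\exp[(2\Rone)^{1-\gAlpha}]\geq\gKap\exp[(2\Rone)^{1-\gAlpha}]$, an unconditional contradiction of \cref{eq:expansion}. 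If you incorporate the bound $|B_v(\Rone)|\geq\Rone$ and use a logarithm inequality that does not require $8(2\Rone)^{-\gAlpha}\le1$, your argument closes.
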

\begin{proof}
Denote $\Rtwo =2\Rone$, and suppose
by contradiction that the claim is false, so that there exist some $v\in V$ and $r\geq \gKap$ such that
\[
\frac{|\partial^+ B_v(r)|}{|B_v(r)|} > \frac{8}{r^\gAlpha}\,, \qquad \mbox{for all $r\in[\Rone, \Rtwo]$}\,.
\]
We can then write 
\begin{align*}
|B_v(\Rtwo)| 
= |B_v(\Rtwo-1)|\left(1+\frac{|\partial^+ B_v(\Rtwo-1)|}{|B_v(\Rtwo-1)|}\right) > |B_v(\Rtwo-1)|\left(1+\frac{8}{(\Rtwo-1)^{\gAlpha}}\right)\,,
\end{align*}
and iterating this argument shows that
\begin{align*}
|B_v(\Rtwo)| >  |B_v(\Rone)|\prod_{r=\Rone}^{\Rtwo-1}\left(1+\frac{8}{r^{\gAlpha}}\right)
 \geq \Rone \exp\Big[\sum_{r=\Rone}^{\Rtwo-1}\log\left(1+\frac{8}{r^{\gAlpha}}\right)\Big]\,.
\end{align*}
Noting that $\log(1+8x) \geq 2x$ for all $0\leq x\leq 1$ and that $\frac{1-2^{-(1-\gAlpha)}}{1-\gAlpha} \geq \frac12$ for all $0\leq \gAlpha\leq 1$, we find
\[
e^{\sum_{r=\Rone}^{\Rtwo-1}\log\left(1+8 r^{-\gAlpha}\right)} \geq e^{2\sum_{r=\Rone}^{\Rtwo-1}r^{-\gAlpha}} \geq e^{2\int_{\Rone}^{\Rtwo}r^{-\gAlpha}\d r} = e^{2 \frac{1-2^{-(1-\gAlpha)}}{1-\gAlpha}(\Rtwo)^{1-\gAlpha}} \geq e^{(\Rtwo)^{1-\gAlpha}}\,.
\]
Having arrived at $|B_v(\Rtwo)| > \Rone \exp[(\Rtwo)^{1-\gAlpha}]$ with $\Rone\geq \gKap$, this contradicts \cref{eq:expansion} as desired.
\end{proof}

For a set of vertices $S\subset $V, an integer $k$ and a distribution $\nu$ on $S$, define
\[ \rho_{\nu}^{\textsf{lazy}}(S,k)=\P_\nu(\mbox{lazy random walk remains in $S$ for $k$ steps})\,.\]
(Here and in what follows, a lazy random walk refers to one that stays in place with probability $\frac12$.)
Oveis Gharan and Trevisan~\cite[Prop.~8]{OveisTrevisan12} proved that on for any graph $G$, 
if $\pi_S$ is the restriction of the stationary distribution $\pi_G$ to $S$ (i.e., $\pi_S(v)= d_G(v)/\Vol(S)$ for $v\in S$), then
\begin{equation*}
\rho_{\pi_S}^{\textsf{lazy}}(S,k) \geq \left(1-\frac{\Phi(S)}{2}\right)^k\,.
\end{equation*}
This extends to a continuous-time random walk with rate $\lambda$: by a time-rescaling, said walk is equivalent to a continuous-time lazy random walk with update rate $2\lambda$. There, the number of updates by time $t$ is $N_t\sim\text{Poisson}(2\lambda t)$, and given $N_t=k$ one can apply the bound on $\rho_{\pi_S}^{\textsf{lazy}}(S,k)$. It follows that  the probability $\rho_{\pi_S}^{\textsf{cont}}(S,t,\lambda)$ that a continuous-time random walk with rate $\lambda$ started from $v\in\pi_S$ stays completely inside $S$ until time $t$ satisfies
\begin{equation}\label{eq:ctsconductancebound}
\rho_{\pi_S}^{\textsf{cont}}(S,t,\lambda) \geq \E\left[\left(1-\Phi(S)/2\right)^{N_t}\right] = e^{-\Phi(S)\lambda t}\,.
\end{equation}
We will use this to prove the following:

\begin{claim}\label{claim:meetinglater}
For every $t=t(n) \geq 0$, any edge $u v\in E$ and any integer $\Rone$ with $\gKap \leq \Rone \leq \frac{1}{10}\diam(G)$, 
\[
\P\big(u \overset{>t}{\leftrightsquigarrow} v\big) \geq 
\exp\big[-2\theta t -C_1 \Rone - C_2 \Rone^{-\gAlpha}\, t\big] \,,
\]
where $C_1,C_2>0$ are constants depending only on $\theta,\gAlpha,\gKap$.
\end{claim}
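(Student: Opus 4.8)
The plan is to lower-bound $\P\big(u \overset{>t}{\leftrightsquigarrow} v\big)$ by exhibiting one explicit scenario for the pair of walks $(Z_s^u),(Z_s^v)$ that forces them to avoid each other on $[0,t]$ while both stay alive, and then to coalesce shortly after time $t$, with probability at least $\exp[-2\theta t-O(\Rone)-O(\Rone^{-\gAlpha}t)]$. The scenario has three phases, preceded by a geometric setup. For the setup: by \cref{lemma:trappingball} there is a radius $r_1\in[\Rone,2\Rone]$ with $\Phi(B_v(r_1))\le 8\Delta \Rone^{-\gAlpha}$; since $\Rone\le\tfrac1{10}\diam(G)$ the eccentricity of $v$ is at least $\tfrac12\diam(G)\ge 5\Rone$, so (going along a geodesic) there is a vertex $c_2$ with $\dist_G(v,c_2)\in[5\Rone,6\Rone]$, and \cref{lemma:trappingball} again yields $r_2\in[\Rone,2\Rone]$ with $\Phi(B_{c_2}(r_2))\le 8\Delta\Rone^{-\gAlpha}$. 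As $r_1+r_2\le 4\Rone<\dist_G(v,c_2)$, the good balls $B_1:=B_v(r_1)$ and $B_2:=B_{c_2}(r_2)$ are disjoint and any vertex of $B_1$ is within distance $10\Rone$ of any vertex of $B_2$. Finally, averaging \cref{eq:ctsconductancebound} (with walk rate $\lambda=1-\theta$) over $\pi_{B_i}$, and using that a max dominates an average, there is a vertex $w_i^\ast\in B_i$ from which a pure rate-$(1-\theta)$ walk stays inside $B_i$ throughout any interval of length $t$ with probability at least $e^{-8\Delta\Rone^{-\gAlpha}(1-\theta)t}$.

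In \emph{Phase A} (duration $s_1=O(\Rone)$) I would first route the $u$-walk from $u$ toward $w_2^\ast$ along a fixed path that never occupies the vertex currently holding the still-stationary $v$-walk, and then route the $v$-walk from the centre $v$ to $w_1^\ast$ inside $B_1$; neither walk is killed. Guiding a walk along a prescribed path of length $O(\Rone)$ within time $O(\Rone)$ with no killing has probability at least $e^{-C\Rone}$ for $C=C(\theta,\gKap)$, since the total event rate is $1$ and each step proceeds in the prescribed direction with probability at least $(1-\theta)/\Delta\ge(1-\theta)/(\gKap e)$ (using $\Delta<\gKap e$ from \cref{eq:expansion}). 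In \emph{Phase B} (the interval $[s_1,t]$) each walk stays confined to its own good ball and is not killed: by the Markov property and the choice of $w_i^\ast$, the per-walk probability on $[s_1,t]$ is at least $e^{-\theta(t-s_1)}e^{-8\Delta\Rone^{-\gAlpha}(1-\theta)t}$, so combined with the no-killing factor from Phase A each walk contributes at least $e^{-C'\Rone}e^{-\theta t}e^{-8\Delta\Rone^{-\gAlpha}(1-\theta)t}$. Since $B_1\cap B_2=\varnothing$, the two walks occupy distinct vertices throughout $[s_1,t]$ (hence have not coalesced by time $t$), and, never having met on $[0,t]$, their evolutions there are independent, so the two per-walk bounds multiply. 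In \emph{Phase C} (the interval $[t,t+s_2]$ with $s_2=O(\Rone)$) the walks lie in $B_1,B_2$, hence at distance $\le 10\Rone$; keep the $u$-walk fixed (no update, probability $\ge e^{-s_2}$) while the $v$-walk follows a geodesic of length $\le 10\Rone$ toward it without being killed and reaches it within the interval, whereupon they coalesce — at a time $>t$. This has probability at least $e^{-C''\Rone}$ uniformly over the time-$t$ positions.

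Intersecting the three phases gives an event contained in $\big\{u\overset{>t}{\leftrightsquigarrow}v\big\}$ of probability at least
\[ e^{-2\theta t}\cdot e^{-16\Delta(1-\theta)\Rone^{-\gAlpha}t}\cdot e^{-(C+C'+C'')\Rone}\ \ge\ \exp\!\big[-2\theta t-C_2\Rone^{-\gAlpha}t-C_1\Rone\big], \]
with $C_2=16\gKap e$ and $C_1=C_1(\theta,\gAlpha,\gKap)$, which is the asserted bound. The one point that needs genuine care — the main obstacle — is the \emph{routing} in Phase A: the two walks must be brought to $w_1^\ast$ and $w_2^\ast$ along paths that never share a vertex with the other walk's current location, and since $u\sim v$ and $u\in B_v(r_1)$ this cannot be arranged naively. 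Connectivity of $G$ together with the slack from $\Rone\le\tfrac1{10}\diam(G)$ makes it possible: one moves the $u$-walk clear of the region occupied by $B_1$ before the $v$-walk leaves $v$ (and, in degenerate cases such as $u$ a leaf or $v$ a cut-vertex, moves the $v$-walk out first), which only affects the constant $C_1$.
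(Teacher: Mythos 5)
The overall strategy is the same as the paper's: produce two disjoint low-conductance balls via \cref{lemma:trappingball}, use \cref{eq:ctsconductancebound} together with a max-over-average argument to obtain a vertex $w_i^\ast$ from which a rate-$(1-\theta)$ walk stays confined at cost $e^{-8\Delta\Rone^{-\gAlpha}(1-\theta)t}$, then explicitly route the two walks in three phases. Phases B and C are fine. However, Phase A has a genuine gap, and the patch you propose does not close it.

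You center the ``stay'' ball $B_1$ at $v$ and send the $u$-walk out to the far ball $B_2=B_{c_2}(r_2)$. If $v$ is a cut vertex separating $u$ from $c_2$ --- which happens, e.g., whenever $u$ is a leaf (take $G$ a path graph and $uv$ the first edge) --- then \emph{every} path from $u$ to $B_2$ passes through $v$. Your fix, ``move the $v$-walk out first,'' does not repair this on trees: the $v$-walk must vacate the unique escape route, but then it has to cross the $u$-walk either on the way out or when it returns to $w_1^\ast\in B_1$, forcing coalescence; and retreating into a subtree hanging off the $u$-side is impossible when there is no such subtree. In short, the origin of both walks sits inside the very ball you want the $v$-walk to occupy, so the walk that has to escape ($u$) is blocked by the one that stays ($v$).

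The paper avoids this with two coordinated choices. First, it relabels $u,v$ so that a shortest path from the \emph{edge} $\{u,v\}$ to a far vertex starts at $v$; being shortest from the edge, that path does not visit $u$. Second, it centers the stay-ball $S$ at $u$ (not at $v$), with the escape target $w$ at distance $4\Rone+1$ from $u$ along that path. Then the $v$-walk escapes first along a route that never hits the stationary vertex $u$, and only after it is safely confined in $S'=B_w(r')$ does the $u$-walk move inside $S$ to $u_*$ --- disjoint from $S'$, so no avoidance of a moving target is ever required. Your Phase A becomes correct if you incorporate the same relabeling and swap which endpoint carries $B_1$, so that the ball is always centered at the endpoint \emph{not} on the geodesic toward $c_2$. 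The remainder of your argument (the eccentricity bound giving $c_2$, the $w_i^\ast$ maximizer trick, Phase B confinement via independence of non-coalesced walks, and Phase C coalescence along a forced geodesic after time $t$) agrees with the paper in substance, with only immaterial differences in the explicit constants.
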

\begin{proof}

The strategy will be the following: let the walks $Z_s^u$ and $Z_s^v$ from $u$ and $v$, respectively, move quickly to two suitable locations, confine them to separate balls until time $t$ using \cref{eq:ctsconductancebound} for each of them and then let them quickly coalesce after time $t$. 

Applying \cref{lemma:trappingball} for $u$ and $\Rone$ yields $r \in [\Rone,2\Rone]$ such that $S := B_u(r)$ has $\Phi(S) \leq  8 \Delta r^{-\gAlpha}$.
Let $\delta_x$ denote the point mass distribution at $x$, and consider $\rho_{\delta_{x}}^{\textsf{cont}}(S,t,1-\theta)$, the probability that the continuous-time random walk with moving rate $1-\theta$, started at $x\in S$, remains in $S$ for time~$t$. From \cref{eq:ctsconductancebound}, we see that the vertex $u_*$ maximizing $\rho_{\delta_{x}}^{\textsf{cont}}(S,t,1-\theta)$ over $x\in S$ must satisfy
\begin{equation}\label{eq:ctsbounds-u}\rho_{\delta_{u_*}}^{\textsf{cont}}(S,t,1-\theta) \geq 
\rho_{\pi_S}^{\textsf{cont}}(S,t,1-\theta) \geq e^{-8 \Delta r^{-\gAlpha} (1-\theta)t}\,.
\end{equation}
By connectedness of $G$ and the fact that $\diam(G)\geq 10 \Rone$, there exists a (shortest) path from the edge $uv$ to a vertex at some distance $>4\Rone$. Assume without loss of generality that such a path starts at $v$ and (being a shortest path) does not visit $u$, and let $w$ be a vertex along this path with $\dist_G(u,w)=4\Rone+1$. 
Applying \cref{lemma:trappingball} for $w$ and $\Rone$ yields some $r' \in [\Rone,2\Rone]$ such that $S' := B_w(r')$ has $\Phi(S) \leq  8 \Delta r^{-\gAlpha}$
(note that $S \cap S' = \emptyset$ thanks to the condition $\dist_G(u,w)=4\Rone+1$).
As argued above for $B_u(r)$, there must exist $w_*\in S'$ such that
\begin{equation}\label{eq:ctsbounds-w}\rho_{\delta_{w_*}}^{\textsf{cont}}(S',t,1-\theta) \geq e^{-8 \Delta (r')^{-\gAlpha} (1-\theta)t}\,.
\end{equation}

We can now realize a sub-event of $\big\{u \overset{>t}{\leftrightsquigarrow} v\big\}$ as follows: 
\begin{enumerate}[(i)]
\item force the first $\dist_G(v,w)+\dist_G(w,w_*) + \dist_G(u,u_*) \; (\leq (4\Rone+1)+2\Rone+2\Rone = 8\Rone+1)$ cumulative updates that $Z_s^u$ and $Z_s^v$ receive in the time interval $s\in [0,t]$ to move $Z_s^v$ from $v$ to $w$ to $w_*$ (while avoiding~$u$), and then move $Z_s^u$ from $u$ to $u_*$ (the probability that each update is non-killing and moves the specified random walk along the specified path is at least $(1-\theta)\cdot\frac{1}{2}\cdot\frac{1}{\Delta}$, where $\Delta$ is the maximum degree of $G$);
\item for the remaining time until $t$ (if not surpassed already in the previous step), force the two walks now at $u_*$ and $w_*$ to stay inside the disjoint $S=B_u(r)$ and $S'=B_{w}(r')$ and receive no killing updates (the non-escape probabilities are bounded from below by \cref{eq:ctsbounds-u,eq:ctsbounds-w}, while the non-killing probability is bounded from below by $e^{-2\theta t}$);
\item force the next cumulative updates that $Z_s^u$ and $Z_s^v$ receive (after $t$) to make the two walks coalesce by going through the minimal path between their current locations (the distance between them is at most $8\Rone+1$, and the probability that each sequential update is non-killing and makes the correct move along the minimal path is at least $(1-\theta)\cdot\frac{1}{\Delta}$).
\end{enumerate}
Hence, 
\begin{align*}
\P\big(u \overset{>t}{\leftrightsquigarrow} v\big) &\geq \left(\frac{1-\theta}{2\Delta}\right)^{8\Rone+1}  e^{-2\theta t}e^{-8\Delta \left[r^{-\gAlpha} +(r')^{-\gAlpha}\right](1-\theta)t}\left(\frac{1-\theta}{\Delta}\right)^{8\Rone+1} \\ &\geq e^{-2\theta t} \left(\frac{1-\theta}{2\Delta}\right)^{20\Rone} e^{-16\Delta \Rone^{-\gAlpha}(1-\theta)t }\,,
\end{align*}
giving the desired bound for  $C_1=20\log\left(\frac{2\Delta}{1-\theta}\right)$ and $C_2=16\Delta(1-\theta) $.
\end{proof}

Using \cref{claim:meetinglater} with 
\begin{equation}\label{eq:rn-choice}
    \Rone = (\log n)^{\frac1{1+\gAlpha}} < (\log n)^{1-\frac\gAlpha2}
\end{equation}
(noting $\Rone=o(\diam(G))$ as every graph $G$ with maximum degree $\Delta$ has $\diam(G) \geq \log_{\Delta-1}(n)-2$), 
whereby $\Rone \asymp \Rone^{-\alpha} \log n$ and so both error terms in that claim will have the same order for $t=t^\ddagger$,
it follows from \cref{eq:meandifference-intermediate} that, for all $t \leq \frac1{4\theta}\log n$ and every sufficiently large $n$,
\begin{equation}\label{eq:meandifferencefinal}
\E_{\mu_G}[\cR_t^{\x_0}(Y)] - \E_{\x_0}[\cR_t^{\x_0}(X_t)] \geq \frac{n e^{-2\theta t}}{2 e^{C (\log n)^{1-\frac{\gAlpha}{2}}}}\,,
\end{equation}
for some fixed $C=C(\theta,\gAlpha,\gKap)>0$ (e.g., take $C = C_1 + C_2/(4\theta)$ for $C_1,C_2$ from that claim).

The final ingredient is an upper bound on the variance of the distinguishing statistics $\cR_t^{\x_0}$. 
\begin{claim}\label{clm:variance-R-corr}
Set $C_0 = 2\Delta^2/\theta$, where $\Delta$ is the maximum degree in $G$. Then for every $t>0$ and $\x_0$,
\[
\Var_{\x_0}(\cR_t^{\x_0}(X_t)) \leq C_0 n\,, \quad \Var_{\mu_G}(\cR_t^{\x_0}(Y)) \leq C_0 n\,.
\]
\end{claim}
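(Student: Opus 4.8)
The plan is to view $\cR_t^{\x_0}$ as a sum over adjacent pairs and bound the resulting double sum of covariances via the duality with coalescing killed random walks. Recalling \cref{def:secondorderstatistics}, write, over the $2|E|$ ordered adjacent pairs,
\[
\cR_t^{\x_0}(\x) = \sum_{u\sim v} T_{uv}(\x)\,,\qquad T_{uv}(\x):=\one_{\{\x(u)=\x(v)\}}-\P_{\x_0}\big(X_t(v)=\x(u)\big)\,,
\]
so that $|T_{uv}|\le 1$ pointwise. Since variance is bilinear, both bounds reduce to showing that
\[
\sum_{u\sim v}\ \sum_{u'\sim v'}\ \big|\Cov(T_{uv},T_{u'v'})\big|\ \le\ C_0\,n\,,
\]
the covariance being taken under $\P_{\x_0}(X_t\in\cdot)$ for the first bound and under $\mu_G$ for the second.

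First I would establish the covariance decay. By \cref{sec:duality,sec:couplingfromthepast}, $(X_t(u),X_t(v))$, and hence $T_{uv}(X_t)$, is a deterministic function of the trajectories of the killed walks $(Z^u_s),(Z^v_s)$ on $[0,t]$ together with the independent colors produced upon killing; under $\mu_G$ the same holds with the walks run until extinction (the CFTP description). Conditionally on the whole collection of walk trajectories, $T_{uv}$ and $T_{u'v'}$ are measurable with respect to disjoint families of fresh colors, hence conditionally independent, \emph{unless} one of $Z^u,Z^v$ coalesces with one of $Z^{u'},Z^{v'}$; by transitivity of coalescence this entanglement event lies in $\bigcup_{a\in\{u,v\},\,b\in\{u',v'\}}\{Z^a\leftrightsquigarrow Z^b\}$. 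A routine decoupling argument — splitting $\Cov(T_{uv},T_{u'v'})$ into $\E[\Cov(\cdot\mid\text{trajectories})]$, which vanishes off the entanglement event, plus $\Cov(\E[\cdot\mid\text{trajectories}])$, whose two factors are determined by the respective clusters — then yields, uniformly in $\x_0,t$ and for both the $X_t$- and the $\mu_G$-versions,
\[
\big|\Cov(T_{uv},T_{u'v'})\big|\ \le\ C\sum_{a\in\{u,v\}}\ \sum_{b\in\{u',v'\}}\ \P\big(Z^a\leftrightsquigarrow Z^b\big)
\]
for an absolute constant $C$.

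The last step is to sum these bounds over all pairs of edges. Coalescence of $Z^a$ and $Z^b$ forces the embedded discrete trajectories of the two killed walks to share a vertex, so a union bound over the shared vertex $w$ and independence of the two walks give
\[
\P\big(Z^a\leftrightsquigarrow Z^b\big)\ \le\ \sum_{w\in V} g(a,w)\,g(b,w)\,,\qquad g(x,w):=\sum_{k\ge 0}(1-\theta)^k P^k(x,w)\,,
\]
where $g(x,w)$ is the expected number of visits to $w$ by simple random walk killed with probability $\theta$ at each step, so $\sum_w g(x,w)=\sum_{k\ge 0}(1-\theta)^k=1/\theta$. Reversibility of simple random walk gives $d_G(b)g(b,w)=d_G(w)g(w,b)$, hence $\sum_b d_G(b)g(b,w)=d_G(w)/\theta\le\Delta/\theta$. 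Collecting the above and counting multiplicities (for each $a$ there are $O(d_G(a))$ ordered adjacent pairs with an endpoint equal to $a$),
\[
\sum_{u\sim v}\ \sum_{u'\sim v'}\ \big|\Cov(T_{uv},T_{u'v'})\big|\ \le\ C\sum_{a,b\in V} d_G(a)d_G(b)\,\P\big(Z^a\leftrightsquigarrow Z^b\big)\ \le\ C\sum_{w\in V}\Big(\frac{d_G(w)}{\theta}\Big)^2\ \le\ \frac{C\Delta}{\theta^2}\cdot 2|E|\,,
\]
which is $O(\Delta^2\theta^{-1} n)$, i.e.\ at most $C_0 n$ after adjusting $C_0$.

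I expect the decoupling step of the second paragraph to be the only delicate point: one must be careful that $T_{uv}$ truly decouples from $T_{u'v'}$ off the entanglement event, even though a cluster's color is shared by \emph{all} walks merging into it (so $T_{uv}$ is a priori not a function of $Z^u,Z^v$ alone). This is handled by observing that a cluster's color is exactly the fresh color read at its unique death point, so disjoint clusters carry independent colors, and by using transitivity to reduce ``the clusters of $\{u,v\}$ and of $\{u',v'\}$ overlap'' to the pairwise events $\{Z^a\leftrightsquigarrow Z^b\}$. (For the equilibrium variance $\Var_{\mu_G}$ one may alternatively reach the sharp constant $2\Delta^2/\theta$ by a Dirichlet-form estimate: a single-site update changes at most $3\Delta$ of the terms $T_{uv}$, each by $O(1)$, so $\tfrac12\E_{\mu_G}\big[(\cR_t^{\x_0}(Y_1)-\cR_t^{\x_0}(Y_0))^2\big]=O(\Delta^2)$, and the Poincar\'e inequality for the noisy voter chain, whose spectral gap equals $\theta/n$, converts this into the claimed bound — the work there being the justification of the Poincar\'e constant for this non-reversible chain.)
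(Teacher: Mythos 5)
Your decomposition into pairwise covariances $\Cov(T_{uv},T_{u'v'})$ and the reduction of each covariance to the four meeting probabilities $\P(Z^a\leftrightsquigarrow Z^b)$ matches the paper's argument in substance: the paper reaches that bound (with explicit constant $2$) by directly coupling $(X_t(u_2),X_t(v_2))$ to an independent copy that agrees until the first crossing time, while your ``decoupling'' via the law of total covariance is a different packaging of the same idea. Where you genuinely diverge is in summing these probabilities. The paper invokes \cref{eq:magnetizationvariance}, obtained from the eigenfunction variance computation in \cref{prop:variance}, which gives $\sum_{v,w}\pi_G(v)\pi_G(w)\P(v\leftrightsquigarrow w)\le\theta^{-1}\sum_v\pi_G(v)^2$. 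You instead use a Green's function estimate: a union bound over the coalescence vertex together with $\P(Z^a\text{ visits }w)\le g(a,w)$ gives $\P(Z^a\leftrightsquigarrow Z^b)\le\sum_w g(a,w)g(b,w)$, and then reversibility of simple random walk collapses the double sum. This route is more elementary (it avoids the eigenfunction machinery) and buys a self-contained proof, but it costs an extra factor of $\theta^{-1}$: carrying through your bound gives $\sum_w(d_G(w)/\theta)^2\le\Delta\cdot 2|E|/\theta^2$, i.e.\ $O(\Delta^2\theta^{-2}n)$, not $O(\Delta^2\theta^{-1}n)$ as you wrote. Since the claim is stated with the specific constant $C_0=2\Delta^2/\theta$, you would either need to adopt the paper's sharper \cref{eq:magnetizationvariance} in that step, or weaken the constant in the statement to $O(\Delta^2/\theta^2)$ — which is entirely harmless for the downstream application (only $O(n)$ is needed), but worth flagging. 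Your closing remark about a Dirichlet-form/Poincar\'e route is a plausible third path, but as you note it hinges on a Poincar\'e inequality for a nonreversible chain, which would itself need a proof.
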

\begin{proof}
Denoting $\cR_{u,v}(X) = \one_{\{X(u)=X(v)\}} - \E_{\x_0}[\one_{\{X(u)=X_t(v)\}}]$, we have
\begin{align*}
\Var_{\x_0}(\cR_t^{\x_0}(X_t)) &= \sum_{u_1 \sim v_1,u_2 \sim v_2}\Cov_{\x_0}(\cR_{u_1,v_1}(X_t), \cR_{u_2,v_2}(X_t))\,.
\end{align*}
Let $X_t'$ be an independent copy of $X_t$. As in the proof of \cref{claim:covariance}, we couple the random walks $(Z_s^{u_2},Z_s^{v_2})$ 
with their counterparts
$(Z_s^{\prime\,u_2},Z_s^{\prime\,v_2})$ 
so that they are identical 
until time~$\tau$, the first time (if finite) that the backward dynamics 
has one of the random walks started from $\{u_1,v_1\}$ meet with one of the walks started from $\{u_2,v_2\}$. (Beyond that time, we  run each chain independently.) 
In particular, the $4$-tuples $(X_t(u_1),X_t(v_1), X_t(u_2), X_t(v_2))$ and $(X_t(u_1),X_t(v_1), X_t'(u_2), X_t'(v_2))$ are identical if $\tau>t$.
Letting $\{x \leftrightsquigarrow y\}$ denote the event that two walks started at $x$ and $y$ with killing rate $\theta$ and moving rate $1-\theta$ meet before either of them dies, we get
\begin{align*}
\Cov_{\x_0}(\cR_{u_1,v_1}(X_t), \cR_{u_2,v_2}(X_t)) &= \E_{\x_0}\left[\cR_{u_1,v_1}(X_t)\cR_{u_2,v_2}(X_t) - \cR_{u_1,v_1}(X_t)\cR_{u_2,v_2}(X_t')\right]\\
&= \E_{\x_0}\left[\left(\cR_{u_1,v_1}(X_t)\cR_{u_2,v_2}(X_t) - \cR_{u_1,v_1}(X_t)\cR_{u_2,v_2}(X_t')\right)\one_{\{\tau \leq t\}}\right]\\
&\leq 2\P(\tau \leq t)\,.
\end{align*}
Relaxing $\{\tau \leq t\}$ into $\{\tau <\infty\}$, followed by a union bound, shows 
\[
\P(\tau\leq t) \leq 
\P(u_1 \leftrightsquigarrow u_2)+\P(u_1 \leftrightsquigarrow v_2)+\P(v_1 \leftrightsquigarrow u_2)+\P(v_1 \leftrightsquigarrow v_2)\,.
\]
and combining the last three displays yields
\begin{align*}
\Var_{\x_0}(\cR_t^{\x_0}(X_t)) \leq 2\sum_{u_1, u_2}d_G(u_1)d_G(u_2)\P(u_1\leftrightsquigarrow u_2) 
\,,
\end{align*}
Using \cref{eq:magnetizationvariance}, we thus find that
\[
\Var_{\x_0}(\cR_t^{\x_0}(X_t)) 
\leq \frac{2}{\theta}\sum_v d_G(v)^2 \leq \frac{2\Delta^2}{\theta}n =  C_0n\,.
\]
As this holds for any $t$, we may take $t\to\infty$ and arrive at
\[
\Var_{\mu_G}(\cR_t^{\x_0}(Y)) \leq  C_0n\,,
\]
as required.
\end{proof}

We are ready to prove \cref{eq:lbsecondordergoal}. 
Define the set of configurations
\[
E^\ddagger = \Big\{\x \in \Cq^V \, : \;  \cR_{t^\ddagger}^{\x_0}(\x) > \big(\E_{\x_0}[\cR_{t^\ddagger}^{\x_0}(X_{t^\ddagger})]+\E_{\mu_G}[\cR_{t^\ddagger}^{\x_0}(Y)\big)/2\Big\}\,.
\]
Recalling from \cref{eq:t-ddagger-def} that $t^\ddagger=\frac1{4\theta}\log n - C^\ddagger(\log n)^{1-\frac\gAlpha2}$ and plugging it in
\cref{eq:meandifferencefinal}, we see that
\[
\E_{\mu_G}[\cR_{t^\ddagger}^{\x_0}(Y)] - 
\E_{\x_0}[\cR_{t^\ddagger}^{\x_0}(X_{t^\ddagger})] \geq \frac1{2} \sqrt{n} \exp\Big[\big(2\theta C^\ddagger-C\big) (\log n)^{1-\frac{\gAlpha}{2}}\Big]\,,
\]
for the constant $C(\theta,q,\gAlpha,\gKap)>0$ from \cref{eq:meandifferencefinal}. Letting
$ C^\ddagger = C/(2\theta) + 1$, this results in the lower bound $(1/C) \sqrt{n} \exp[(\log n)^{1-\frac\gAlpha2}]$. We now combine this with \cref{clm:variance-R-corr}, and deduce from Chebyshev's inequality that
\begin{align*}
\P_{\x_0}(X_{t^\ddagger} \in E^\ddagger) 
\leq \frac{4\Var_{\x_0}(\cR_{t^\ddagger}^{\x_0}(X_{t^\ddagger}))}{(\E_{\mu_G}[\cR_{t^\ddagger}^{\x_0}(Y)] - \E_{\x_0}[\cR_{t^\ddagger}^{\x_0}(X_{t^\ddagger})])^2} &\leq 16 C_0  \exp\Big[-2 (\log n)^{1-\frac\gAlpha2}\Big]\,. \end{align*}
Similarly,
\begin{align*}
\P_{\mu_G}(Y \notin E^\ddagger)  
\leq \frac{4\Var_{\mu_G}(\cR_{t^\ddagger}^{\x_0}(Y))}{(\E_{\mu_G}[\cR_{t^\ddagger}^{\x_0}(Y)] - \E_{\x_0}[\cR_{t^\ddagger}^{\x_0}(X_{t^\ddagger})])^2} &\leq 16 C_0 \exp\Big[-2 (\log n)^{1-\frac\gAlpha2}\Big]\,,
\end{align*}
and it follows that
\[
\|\P_{\x_0}(X_t \in \cdot) - \mu_G\|_\tv \geq \P_{\mu_G}(Y \in E^\ddagger)  - \P_{\x_0}(X_t \in E^\ddagger) \geq 1- 32 C_0 e^{-2(\log n)^{1-\gAlpha/2}}\,,
\]
thereby establishing \cref{eq:lbsecondordergoal}.
\qed

\section{Concluding remarks and open problems}\label{sec:open-prob}

Our main result, \cref{thm:main}, established that the noisy voter model, for every choice of the parameters $\theta,q$, exhibits total variation cutoff from any sequence of initial conditions $\x_0^{(n)}$ provided that the underlying graphs $G^{(n)}$ have subexponential growth of balls as per \cref{eq:expansion}, 
(Previously, cutoff was only known  for $q=2$ and only from worst-case and a few other periodic initial states.) 
We characterized the fastest initial state $\x_0$ on $\Z^d_n$ for all $d$ at $q=2$ and for $d=1$ at every $q$, demonstrated that this optimum is nontrivial for large $d,q$ (e.g., at $d=3, q=4$, or $d=2, q=5$), and showed in
\cref{cor:uniform} that it cannot asymptotically outperform the uniform initial condition~$\cU$.
It would be interesting to extend this to bounded-degree graphs that do not satisfy \cref{eq:expansion}:
\begin{question}
    \label{q:gen-graphs}
Let $G^{(n)}$ be a sequence of bounded-degree connected graphs on $n$ vertices. Does the noisy voter model on $G^{(n)}$, for all fixed $q\geq 2$ and $\theta\in(0,1)$, exhibit cutoff 
from every sequence~$\x_0^{(n)}$ of initial states (i.e., is there a function $T_n$ so that $\tmix^{\x_0}(\epsilon)=(1+o(1))T_n(\x_0^{(n)})$ for all fixed $0<\epsilon<1$)?
Is the uniform initial state always asymptotically as fast as the deterministic optimum  $\min_{\x_0}T_n(\x_0)$?
\end{question}

A concrete class of graphs to study would be a typical random $d$-regular graph for fixed $d\geq 3$.
We remark that we used the condition in \cref{eq:expansion} in an essential way in both the upper bounds and the lower bounds.
In the upper bound, it allowed us to reduce the surviving history of the sites, over a negligible time period of $o(\log n)$, to a collection of well-separated (\good) subsets (see \cref{lem:E}). This supported the coupling to a product chain in \cref{subsec:tv-t2-product-t1}, and further played a role in the reduction to the autocorrelation function in \cref{subsec:reductiontoA2}. In the lower bound, the subexponential growth was used to control the variance of the test function in the bound $\tmix^{\x_0}\gtrsim T_{\x_0}$ (see \cref{clm:var-R(Xt)}), and to identify sets $S$ with a small conductance $\Phi(S)$ so as to bound the escape probability of the random walk from them en route to the bound $\tmix^{\x_0}\gtrsim \frac1{4\theta}\log n$ (see \cref{lemma:trappingball}).

Another interesting open problem is the cutoff window. The work \cite{CPS16} showed cutoff with an $O(1)$ window from a worst-case starting state on any graph $G$ when $q=2$ (as did \cite{LubetzkySly16} when $G=\Z_n$, the 1D Ising model); i.e., $\tmix(\epsilon)-\tmix(1-\epsilon) = O(1)$ with an implicit constant depending on $\epsilon$. Our new results on a general $\x_0$ only
focused on the asymptotics of $\tmix^{\x_0}$ (as did those of~\cite{LubetzkySly21}). For example, on $G=\Z^d_n$, our proof arguments will only show that $\tmix(\epsilon)-\tmix(1-\epsilon)= O(\log\log n)$.

\begin{question}
    \label{q:window}
Let $G^{(n)}$ be a sequence of connected graphs on $n$ vertices satisfying \cref{eq:expansion}. 
Does the noisy voter model on $G^{(n)}$, for every fixed $\theta\in(0,1)$ and $q\geq 2$, and every sequence of initial conditions $\x_0^{(n)}$, satisfy $\tmix^{\x_0}(\epsilon)-\tmix^{\x_0}(1-\epsilon) = O(1)$ with an implicit constant depending on $\epsilon$?
\end{question}

We conclude with an open problem on the noisy voter model on a class of graphs $G=G^{(n)}$ in which $\max_v d_{G}(v) / \min_v d_{G}(v)\to\infty$ as $n\to\infty$. (We only considered $G$ with $\max_v d_{G}(v)= O(1)$, and even the worst-case initial state analysis of \cite{CPS16} required $\max_v d_{G}(v) / \min_v d_{G}(v) =O(1)$.) Namely, as we mentioned the random $d$-regular in the context of \cref{q:gen-graphs}, it is natural to ask what the behavior would be on its counterpart, the Erd\H{o}s--R\'enyi random graph $\cG(n,p)$ for $p=d/n$. Note that with an unbounded degree ratio, \cref{eq:varianceinequality2} no longer gives that $\Var_{\mu_G}(\Psi_l^{(k)}) \asymp 1/n$, and it is then plausible that the correct generalization of \cref{thm:main} would be given in terms of the variant of $\autII_t(\x_0)$ where the functions $\Psi_l^{(k)}$ (as per \cref{eq:At2-rep-2}) are normalized in $L^2(\mu_G)$.

\begin{question}\label{q:G(n,p)}
Address the analogue of \cref{q:gen-graphs} for noisy voter model on a typical instance of an Erd\H{o}s--R\'enyi random graph $\cG(n,p)$ with $p=d/n$, for all fixed $d>0$, $q\geq 2$ and $\theta\in(0,1)$.
\end{question}

\bibliographystyle{abbrv}
\bibliography{noisy}

\begin{thebibliography}{10}

\bibitem{Aldous83}
D.~Aldous.
\newblock Random walks on finite groups and rapidly mixing {M}arkov chains.
\newblock In {\em Seminar on probability, {XVII}}, volume 986 of {\em Lecture
  Notes in Math.}, pages 243--297. Springer, Berlin, 1983.

\bibitem{AldousDiaconis86}
D.~Aldous and P.~Diaconis.
\newblock Shuffling cards and stopping times.
\newblock {\em Amer. Math. Monthly}, 93(5):333--348, 1986.

\bibitem{Alon09}
N.~Alon.
\newblock Perturbed identity matrices have high rank: proof and applications.
\newblock {\em Combin. Probab. Comput.}, 18(1-2):3--15, 2009.

\bibitem{BHZ23}
I.~Benjamini, H.~Helman~Tov, and M.~Zhukovskii.
\newblock Global information from local observations of the noisy voter model
  on a graph.
\newblock {\em Ann. Probab.}, 51(5):1963--1992, 2023.

\bibitem{CliffordSudbury73}
P.~Clifford and A.~Sudbury.
\newblock A model for spatial conflict.
\newblock {\em Biometrika}, 60:581--588, 1973.

\bibitem{CPS16}
J.~T. Cox, Y.~Peres, and J.~E. Steif.
\newblock Cutoff for the noisy voter model.
\newblock {\em Ann. Appl. Probab.}, 26(2):917--932, 2016.

\bibitem{Diaconis96}
P.~Diaconis.
\newblock The cutoff phenomenon in finite {M}arkov chains.
\newblock {\em Proc. Nat. Acad. Sci. USA}, 93(4):1659--1664, 1996.

\bibitem{DiaconisShahshahani81}
P.~Diaconis and M.~Shahshahani.
\newblock Generating a random permutation with random transpositions.
\newblock {\em Z. Wahrsch. Verw. Gebiete}, 57(2):159--179, 1981.

\bibitem{evans2000broadcasting}
W.~Evans, C.~Kenyon, Y.~Peres, and L.~J. Schulman.
\newblock Broadcasting on trees and the ising model.
\newblock {\em Annals of Applied Probability}, pages 410--433, 2000.

\bibitem{GranovskyMadras95}
B.~L. Granovsky and N.~Madras.
\newblock The noisy voter model.
\newblock {\em Stochastic Process. Appl.}, 55(1):23--43, 1995.

\bibitem{HolleyLiggett75}
R.~A. Holley and T.~M. Liggett.
\newblock Ergodic theorems for weakly interacting infinite systems and the
  voter model.
\newblock {\em Ann. Probability}, 3(4):643--663, 1975.

\bibitem{LevinPeres17}
D.~A. Levin and Y.~Peres.
\newblock {\em Markov chains and mixing times}.
\newblock American Mathematical Society, Providence, RI, 2017.
\newblock 2nd Ed. With contributions by Elizabeth L. Wilmer, With a chapter on
  ``Coupling from the past'' by James G. Propp and David B. Wilson.

\bibitem{LiggettBook99}
T.~M. Liggett.
\newblock {\em Stochastic interacting systems: contact, voter and exclusion
  processes}, volume 324 of {\em Fundamental Principles of Mathematical
  Sciences}.
\newblock Springer-Verlag, Berlin, 1999.

\bibitem{LubetzkySly13}
E.~Lubetzky and A.~Sly.
\newblock Cutoff for the {I}sing model on the lattice.
\newblock {\em Invent. Math.}, 191(3):719--755, 2013.

\bibitem{LubetzkySly14}
E.~Lubetzky and A.~Sly.
\newblock Cutoff for general spin systems with arbitrary boundary conditions.
\newblock {\em Comm. Pure Appl. Math.}, 67(6):982--1027, 2014.

\bibitem{LubetzkySly15}
E.~Lubetzky and A.~Sly.
\newblock An exposition to information percolation for the {I}sing model.
\newblock {\em Ann. Fac. Sci. Toulouse Math. (6)}, 24(4):745--761, 2015.

\bibitem{LubetzkySly16}
E.~Lubetzky and A.~Sly.
\newblock Information percolation and cutoff for the stochastic {I}sing model.
\newblock {\em J. Amer. Math. Soc.}, 29(3):729--774, 2016.

\bibitem{LubetzkySly17}
E.~Lubetzky and A.~Sly.
\newblock Universality of cutoff for the {I}sing model.
\newblock {\em Ann. Probab.}, 45(6A):3664--3696, 2017.

\bibitem{LubetzkySly21}
E.~Lubetzky and A.~Sly.
\newblock Fast initial conditions for {G}lauber dynamics.
\newblock {\em Probab. Theory Related Fields}, 181(1-3):647--667, 2021.

\bibitem{OveisTrevisan12}
S.~Oveis~Gharan and L.~Trevisan.
\newblock Approximating the expansion profile and almost optimal local graph
  clustering.
\newblock In {\em 2012 {IEEE} 53rd {A}nnual {S}ymposium on {F}oundations of
  {C}omputer {S}cience---{FOCS} 2012}, pages 187--196. IEEE Computer Soc., Los
  Alamitos, CA, 2012.

\bibitem{PymarRevera24}
R.~Pymar and N.~Rivera.
\newblock Asymptotic behaviour of the noisy voter model density process.
\newblock {\em Ann. Appl. Probab.}, 34(5):4554--4594, 2024.

\bibitem{Tao13}
T.~Tao.
\newblock Cheap {K}abatjanskii--{L}evenstein bound, 2013.
\newblock
  \url{https://terrytao.wordpress.com/2013/07/18/a-cheap-version-of-the-kabatjanskii-levenstein-bound-for-almost-orthogonal-vectors/}.

\end{thebibliography}
\end{document}